\newtheorem{teor}{Theorem}[section]
\newtheorem{thm}[teor]{Theorem}
\newtheorem{lemm}[teor]{Lemma}
\newtheorem{lem}[teor]{Lemma}
\newtheorem{prop}[teor]{Proposition}
\newtheorem{coro}[teor]{Corollary}
\theoremstyle{definition}
\newtheorem{defin}[teor]{Definition}
\newtheorem{df}[teor]{Definition}
\newtheorem{defi}[teor]{Definition}
\theoremstyle{remark}
\newtheorem{osse}[teor]{Remark}
\newtheorem{rem}[teor]{Remark}
\newtheorem{prob}[teor]{Problem}
\newtheorem{assu}[teor]{Assumption}
\newcommand{\bele}{\begin{lemm}\begin{sl}}
\newcommand{\enle}{\end{sl}\end{lemm}}
\newcommand{\bedef}{\begin{defi}\begin{sl}}
\newcommand{\eddef}{\end{sl}\end{defi}}
\newcommand{\bete}{\begin{teor}\begin{sl}}
\newcommand{\ente}{\end{sl}\end{teor}}
\newcommand{\beos}{\begin{osse}\begin{rm}}
\newcommand{\eddos}{\end{rm}\end{osse}}
\newcommand{\beas}{\begin{assu}\begin{rm}}
\newcommand{\eddas}{\end{rm}\end{assu}}
\newcommand{\bepr}{\begin{prop}\begin{sl}}
\newcommand{\empr}{\end{sl}\end{prop}}
\newcommand{\bepro}{\begin{prob}\begin{rm}}
\newcommand{\empro}{\end{rm}\end{prob}}
\newcommand{\bede}{\begin{defin}\begin{sl}}
\newcommand{\edde}{\end{sl}\end{defin}}
\newcommand{\beco}{\begin{coro}\begin{sl}}
\newcommand{\enco}{\end{sl}\end{coro}}
\newcommand{\quext}{\quad\text}
\newcommand{\de}{\partial}
\newcommand{\RR}{\mathbb{R}}
\newcommand{\NN}{\mathbb{N}}
\newcommand{\phj}{\varphi}
\newcommand{\modu}{\varrho_{\phj,\Omega}}
\newcommand{\modud}{\varrho_{\phj^*,\Omega}}
\newcommand{\Modu}{\varrho_{\phj,Q}}
\newcommand{\Modud}{\varrho_{\phj^*,Q}}
\newcommand{\Vp}{V^*}
\newcommand{\calVp}{{\mathcal V}^*}
\newcommand{\fQ}{\mathcal{Q}}
\newcommand{\beeq}[1]{\begin{equation}\label{#1}}
\newcommand{\eddeq}{\end{equation}}
\newcommand{\beeqa}[1]{\begin{eqnarray}\label{#1}}
\newcommand{\eddeqa}{\end{eqnarray}}
\newcommand{\beal}[1]{\begin{align}\label{#1}}
\newcommand{\eddal}{\end{align}}
\newcommand{\bespl}[1]{\begin{split}\label{#1}}
\newcommand{\edspl}{\end{split}}
\newcommand{\bega}[1]{\begin{gather}\label{#1}}
\newcommand{\edga}{\end{gather}}
\newcommand{\beeqax}{\begin{eqnarray*}}
\newcommand{\eddeqax}{\end{eqnarray*}}
\newcommand{\no}{\nonumber}
\newcommand{\beeqao}{\begin{eqnarray}\no}
\newcommand{\bealo}{\begin{align}\no}
\newcommand{\besplo}{\begin{split}\no}
\newcommand{\begao}{\begin{gather}\no}
\newcommand{\duav}[1]{\langle{#1}\rangle}
\newcommand{\duaV}[1]{\langle\!\langle{#1}\rangle\!\rangle}
\newcommand{\+}{\hspace{1pt}}
\newcommand{\io}{\int_\Omega}
\newcommand{\iE}{\int_E}
\newcommand{\iTT}{\int_0^T}
\newcommand{\iTo}{\iint_Q}
\newcommand{\epsi}{\varepsilon}
\newcommand{\lla}{_{\lambda}}
		     \def\R{\mathbb R}
		     \def\N{\mathbb N}
\newcommand{\fhi}{\mathscr{E}}
\newcommand{\Fhi}{\mathbb{E}}
\newcommand{\lhs}{left-hand side}
\newcommand{\rhs}{right-hand side}
\DeclareMathOperator{\deriv}{d}
\DeclareMathOperator{\exte}{ext}
\newcommand{\Bext}{B_{\+\exte}}
\newcommand{\calA}{{\mathcal A}}
\newcommand{\calJ}{{\mathcal J}}
\newcommand{\calM}{{\mathcal M}}
\newcommand{\calB}{{\mathcal B}}
\newcommand{\calS}{{\mathcal S}}
\newcommand{\calV}{{\mathcal V}}
\newcommand{\dit}{\deriv\!t}
\newcommand{\dis}{\deriv\!s}
\newcommand{\dix}{\deriv\!x}
\newcommand{\Lpsot}{L^\phj(\Omega)}
\newcommand{\Lpso}{L^\phj(\Omega)}
\newcommand{\Lpsq}{L^\phj(Q)}
\newcommand{\Lpsostar}{L^{\phj^*}(\Omega)}
\newcommand{\deo}{\partial_\Omega}
\newcommand{\deq}{\partial_Q}
\numberwithin{equation}{section}
\renewcommand{\d}{\mathrm{d}}
\newcommand{\e}{\mathrm{e}}
\newcommand{\lam}{\lambda}
\newcommand{\vep}{\varepsilon}
\DeclareMathOperator*{\esssup}{ess\,sup}
\DeclareMathOperator*{\essinf}{ess\,inf}
\title[Doubly-nonlinear evolution in Musielak-Orlicz spaces]{On a class of doubly-nonlinear evolution equations\\in Musielak-Orlicz spaces}
\author{Goro Akagi}
\address[Goro Akagi]{Mathematical Institute and Graduate School of Sciences, Tohoku University, Aoba, Sendai 980-8578 Japan}
\email{goro.akagi@tohoku.ac.jp}
\author{Giulio Schimperna}
\address[Giulio Schimperna]{Dipartimento di Matematica, Universit\`a di Pavia,
and Istituto di Matematica Applicata e Tecnologie Informatiche ``Enrico Magenes'' (IMATI),
Via Ferrata~5, I-27100 Pavia, Italy}
\email{giusch04@unipv.it}
\date{\today}
\keywords{doubly-nonlinear evolution equation, subdifferential, Musielak-Orlicz space, duality}
\begin{document}

\subjclass[2010]{\emph{Primary}: 35K55;  
  \emph{Secondary}: 35A01, 35B65, 46E30, 47H05 } 

\begin{abstract}
 This paper is concerned with a parabolic evolution equation of the form $A(u_t) + B(u) = f$, settled in a smooth bounded domain of $\RR^d$, $d\ge 1$, and complemented with the initial conditions and with (for simplicity) homogeneous Dirichlet boundary conditions. Here, $-B$ stands for a diffusion operator, possibly nonlinear, which may range in a very wide class, including the Laplacian, the $m$-Laplacian for suitable $m\in (1,\infty)$, the ``variable-exponent'' $m(x)$-Laplacian, or even some fractional order operators. The operator $A$ is assumed to be in the form $[A(v)](x,t)=\alpha(x,v(x,t))$ with $\alpha$ being measurable in $x$ and maximal monotone in $v$. The main results are devoted to proving existence of weak solutions for a wide class of functions $\alpha$ that extends the setting considered in previous results related to the variable exponent case where $\alpha(x,v)=|v(x)|^{p(x)-2}v(x)$. To this end, a theory of subdifferential operators will be established in Musielak-Orlicz spaces satisfying structure conditions of the so-called $\Delta_2$-type and a framework for approximating maximal monotone operators acting in that class of spaces will also be developed. Such a theory is then applied to provide an existence result for a specific equation, but it may have an independent interest in itself. Finally, the existence result is illustrated by presenting a number of specific equations (and, correspondingly, of operators $A$, $B$) to which the result can be applied.
\end{abstract}

\maketitle

\section{Introduction}
\label{sec:intro}

\emph{Doubly-nonlinear evolution equations} have been studied for more than half a century. In principle, they may be classified into two forms 
(see~\cite{Visintin}, cf.\ also~\cite{Roubicek}): the first originates from a generalization of nonlinear problems
such as the \emph{fast diffusion} and \emph{porous medium equations}. In that case there appear two nonlinearities acting on the unknown itself as follows:
\begin{equation}
\label{dne0}
\partial_t A(u) + B(u) = 0,
\end{equation}
where $\partial_t$ denotes the time-derivative and $A$ and $B$ are (possibly) nonlinear operators acting on a proper function space 
and whose typical example is given as $A(u)=|u|^{p-2}u$, a power of the unknown $u=u(x,t)$ with $1 < p < +\infty$, 
and $B(u)=-\Delta_m u := - \mathrm{div} \left( |\nabla u|^{m-2}\nabla u\right)$ with the so-called \emph{$m$-Laplace}
operator $\Delta_m$. On the other hand, the second type of doubly nonlinear equation has been introduced in
a celebrated work by V.~Barbu~\cite{B75} dealing with the following evolutionary problem:
\begin{equation}
\label{dne} A(u_t) + B(u) = 0,
\end{equation}
where $u_t = \partial_t u$ (see also~\cite{Arai,CV,Colli,GO3,SSS,Roubicek,G15} and references therein).
Concrete examples can also be provided by taking the same choices of $A$ and $B$ above, that is,
\begin{equation*}
 |u_t|^{p-2}u_t - \Delta_m u = 0.
\end{equation*}
So far, equation~\eqref{dne} seems to have been studied less extensively compared to~\eqref{dne0}, even though a number of recent works~\cite{Hy17,Hy19,HyLi16,HyLi17,HyLi19} have been devoted to equations of the form~\eqref{dne}
occurring in the modelization of the so-called \emph{strongly irreversible} 
(or \emph{unidirectional}) \emph{processes} and of \emph{rate-independent processes} (see, e.g.,~\cite{AK19,KRZ13,MiRo15}). 

The present paper  is also devoted to 
studying  the latter doubly-nonlinear problem \eqref{dne}.
 In order to explain the novelties of our results, we need to present some overview
of the previous works dealing with~\eqref{dne}. 
In~\cite{B75} and subsequent studies (see~\cite{Arai}), existence of a strong solution is 
proved for the following abstract Cauchy problem in a Hilbert space $H$,
$$
\partial \psi(u_t(t)) + \partial \varphi(u(t)) \ni f(t) \ \mbox{ in } H , \quad 0 < t < T, \quad u(0)=u_0,
$$
where $\partial \psi$ and $\partial \varphi$ denote the \emph{subdifferentials} of $\psi$ and $\varphi$, 
respectively, and $f : (0,T) \to H$ and $u_0 \in H$ are given data, assuming some additional monotonicity 
(namely, the so-called \emph{$\partial\psi$-monotonicity}) of $\partial\varphi$ as well as  some differentiability 
of $f$.  These structure assumptions are essential to ensure the validity of the results in~\cite{B75,Arai}. 
On the other hand, Colli and Visintin~\cite{CV} shed new light on this field; they presented a different 
framework, which can cover the equation
\begin{equation}\label{cv}
A(u_t(t)) + \partial \varphi(u(t)) \ni f(t) \ \mbox{ in } H, \quad 0 < t < T, \quad u(0)=u_0,
\end{equation}
for any maximal monotone operator $A : H \to H$ satisfying an affine growth condition (see \eqref{hp:cv} below with $p=2$ and $V=H$) and for any $f \in L^2(0,T;H)$.  Afterwards, these results were extended in~\cite{Colli} to any maximal monotone operator $A : V \to V^*$ satisfying a $p$-growth condition for any $p \in (1,+\infty)$ in a reflexive Banach space $V$ and for $f \in L^{p'}(0,T;V^*)$ with the H\"older conjugate $p'$ of $p$ and the dual space $V^*$ of $V$. Moreover, 
 we observe that the theory developed in~\cite{CV,Colli}, being more adaptable to deal with perturbations, can also be applied to the so-called phase-field models, which are characterized by the presence of a further semilinear term of the form $h(u)$, where the function $h$ is generally nonlinear and may have a singular character. On the other hand, the growth condition on $A$ plays an essential role in the development of the theory. To be precise, in~\cite{Colli}, the operator $A : V \to V^*$ is supposed to satisfy
\begin{equation}\label{hp:cv}
\alpha \|u\|_V^p \leq \langle A(u), u \rangle + C_1, \quad \|A(u)\|_{V^*}^{p'} \leq C_2 (\|u\|_V^p + 1) \quad \mbox{ for } \ u \in V,
\end{equation}
where $\alpha > 0$ and $C_1,C_2 \geq 0$ are constants, and the relation among exponents in these conditions
is indispensable in place of the differentiability of $f$ and the $\partial \psi$-monotonicity of $\partial \varphi$ 
as in~\cite{B75,Arai}. Then one may wonder whether or not the $p$-growth condition might be relaxed or generalized, 
but without assuming the differentiability of $f$ as in~\cite{B75,Arai}. 

In order to discuss such a question, we first recall that 
the following doubly-nonlinear parabolic equation is studied in~\cite{AS} as a toy model:
\begin{alignat}{4}
|u_t|^{p(x)-2}u_t(x,t) - \Delta_{m(x)} u(x,t) &= f(x,t) \quad &&\mbox{ for } \ x\in \Omega, \ t > 0,\label{pde0}\\
u(x,t) &= 0 \quad &&\mbox{ for } \ x \in \partial \Omega, \ t > 0,\label{bc0}\\
u(x,0) &= u_0(x) \quad &&\mbox{ for } \ x \in \Omega,\label{ic0}
\end{alignat}
where $p = p(x), \, m = m(x) : \Omega \to [1,+\infty]$ are \emph{variable exponents} (i.e., measurable functions in $\Omega$) satisfying
\begin{equation*}
1 < p^- := \essinf_{x \in \Omega} p(x) \leq p^+ := \esssup_{x \in \Omega} p(x) < +\infty, \quad 1 < m^- \leq m^+ < +\infty
\end{equation*}
and $\Delta_{m(x)}$ stands for the so-called \emph{$m(x)$-Laplace operator} given by
$$
\Delta_{m(x)} w(x) := \mathrm{div} \, \left(|\nabla w(x)|^{m(x)-2} \nabla w(x)\right).
$$
Indeed, although the power nonlinearity $r \in \R \mapsto |r|^{p(x)-2}r$ is homogeneous of degree $p(x)-1$ at each $x \in \Omega$, the Cauchy-Dirichlet problem \eqref{pde0}--\eqref{ic0} is reduced into the abstract Cauchy problem for \eqref{cv} posed on $V = L^{p(x)}(\Omega)$, which is the so-called \emph{variable exponent Lebesgue space} (see~\cite{DHHR}), and the operator $A$ (defined as $A(w) = |w|^{p(x)-2}w$ for $v \in V$) is no longer consistent with the assumption \eqref{hp:cv} (moreover, it is also beyond the scope of~\cite{B75, Arai} due to the $x$-dependence of the nonlinearity, since it violates the $\partial \psi$-monotonicity of $\partial \varphi$). In~\cite{AS}, the theory of maximal monotone operators (in particular, subdifferential calculus) is customized for the variable exponent Lebesgue space 
setting, and,  based on this generalization,  existence of strong solutions to \eqref{pde0}--\eqref{ic0} is proved with maximal regularity (i.e., for $f$ lying on a certain class $\calV^*$, both $|u_t|^{p(x)-2}u_t$ and $\Delta_{m(x)}u$ belong to the same $\calV^*$).  A relevant mathematical difficulty resides in the  mismatch of the equation with the standard frame for evolution equations based on Lebesgue-Bochner spaces, say $L^p(0,T;V)$, where functions of $x$ and $t$ are regarded as vector-valued functions of $t$ only and which gives a better fit to energy methods such as chain-rule formula. Indeed, in order to reformulate the PDE \eqref{pde0} in a Bochner-Lebesgue space setting, one cannot fully figure out the integrability of the nonlinear term due to the inhomogeneity of variable exponents (for instance, $L^{p(x)}(0,T;L^{p(x)}(\Omega))$ makes no longer sense with variable exponents), and such a defect fatally violates the $p$-growth condition used in~\cite{CV,Colli}. On the other hand, one can fully extract the integrability in the frame of space-time (variable exponent) Lebesgue spaces, say $L^{p(x)}(\Omega\times(0,T))$,  where, however, energy methods may no longer be applied in a standard way. To overcome such a difficulty, in~\cite{AS} we introduced a customized theory of subdifferential calculus, in a somehow mixed framework where fine properties of variable exponent Lebesgue spaces (see~\cite{DHHR}) play a crucial role in order to preserve the availability of variational methods. 

In the present paper we shall further generalize the results of \cite{AS} by keeping, as in that paper, the dependence on $x$ of the nonlinear term acting on $u_t$, but at the same time weakening its structure properties. Indeed, power functions with variable exponents still enjoy homogeneity at each $x \in \Omega$, whereas one may consider a case where, for some, or all, $x\in\Omega$, $A$ may act on its target $v$, for example, as $v \log(1+|v|)$, i.e., the power-growth condition in the sense of \eqref{hp:cv} does no longer hold. To present our framework in a more rigorous way,  we let $\Omega$ be a smooth bounded domain in $\RR^N$ with $N\ge 1$, let $T>0$ be an assigned final time and consider the following doubly-nonlinear evolution equation for the unknown $u:(0,T)\times \Omega\to \RR$\/:
\begin{equation}\label{proto-eqn}
  \alpha(x,u_t(x,t)) + [B(u)](x,t) = f(x,t)  \quad \text{ for } \ x \in \Omega, \ t \in (0,T),
\end{equation}
where the function $f=f(x,t)$ represents an assigned forcing term. Here $\alpha=\alpha(x,r) : \Omega \times \R \to \R$ is a nonlinear function which is measurable in $x$ and maximal monotone in $r$ and $B$ is an operator of subdifferential type in a proper function space (associated with the function $\alpha(x,r)$ as we shall see later); as a concrete example, one may consider 
$$
B(u) = -\Delta_{m(x)}u
$$
equipped with the homogeneous Dirichlet condition. In addition, the precise assumptions on $\alpha$ and $B$ will be thoroughly discussed below and rigorously detailed in the next section; typical examples of $\alpha(x,r)$ which we have in mind are
$$
\alpha(x,r) = |r|^{p(x)-2}r, \quad |r|^{p(x)-2} r [\log (|r|+1)]^{q(x)}, \quad \alpha(x) |r|^{p(x)-2}r + \beta(x) |r|^{q(x)-2}r
$$
for variable exponents $p(x)$, $q(x)$ and nonnegative $\alpha, \beta \in L^\infty(\Omega)$ satisfying $\alpha+\beta > 0$ in $\Omega$. These examples, except for the first one, are beyond the scope of the result in~\cite{AS} as well as those in~\cite{B75, Arai, CV, Colli}. Equation \eqref{proto-eqn} is complemented with the initial condition $u|_{t=0}=u_0$, whereas the boundary conditions will be incorporated in the definition of $B$ and in the chosen functional setting. The main purpose of the present paper is proving existence of strong solutions to \eqref{proto-eqn} under  assumptions on $\alpha$ and $B$ that are weaker and more general compared to previous results. 
To this end, we shall employ the so-called \emph{Musielak-Orlicz spaces}, which are a general category of function spaces including variable exponent Lebesgue spaces and having Orlicz spaces as a subclass. 

Constructing a strong solution for \eqref{proto-eqn} is also relevant to a theory of metric gradient flows (see~\cite{AGS}), where a \emph{curve of maximal slope} $u : [0,T] \to (X,\d)$ of an energy $\varphi : X \to (-\infty,+\infty]$ in a metric space $(X,\d)$ is constructed based on the \emph{minimizing movement scheme}; more precisely, $u : [0,T] \to X$ is a limit of the piecewise constant interpolant $\hat u_\tau : [0,T] \to X$ of discretized solutions $\{u_j\}$ which minimize the functionals,
$$
w \in X \mapsto \frac 1{2\tau} \d(w,u_{j-1})^2 + \varphi(w), \quad j = 1,2,\ldots, K
$$
with the time-step $\tau = T/K > 0$ and an initial datum $u_0 \in X$. Moreover, in~\cite{AGS}, for $1<p<+\infty$, the notion of \emph{$p$-curve of maximal slope} is introduced as a natural generalization and it can be constructed by performing the minimizing movement scheme with the functional above replaced by 
$$
w \in X \mapsto \frac 1{p\tau} \d(w,u_{j-1})^p + \varphi(w).
$$
In particular, if $X = V$ is a reflexive Banach space (i.e., $\d(u,v) = \|u-v\|_V$), one can derive a gradient system (with a duality mapping $F_V : V \to V^*$), 
$$
\|u_t\|_V^{p-2} F_V(u_t(t)) + \partial \varphi(u(t)) \ni 0 \ \mbox{ in } V^*, \quad 0 < t < T,
$$
which generates a $p$-curve of maximal slope as a solution $u :[0,T] \to V$ and which is reduced to the doubly-nonlinear evolution equation \eqref{cv} in $V^*$ satisfying \eqref{hp:cv}.  Indeed, in order to address \eqref{proto-eqn}, 
we shall take a potential function $\phj(x,r)$ of $\alpha(x,r)$, i.e., $\alpha(x,r) = \partial \phj(x,r)$, and construct a set of discretized solutions $\{u_j\}$ by minimizing the functionals
$$
w \in V \mapsto \tau \int_\Omega \phj\left(x,\frac{w(x)-u_{j-1}(x)}\tau \right) \, \d x + \fhi(w), \quad j = 1,2,\ldots,K,
$$
where $\fhi : V \to [0,+\infty]$ is a potential functional of the operator $B$ (i.e., $B = \partial \fhi$), defined on a Musielak-Orlicz space $V$  which is defined by exploiting the specific expression of $\phj$ as a 
\emph{generalized $\Phi$-function} (we refer to \cite{DHHR} for the underlying function space theory). 
Here we remark that the first term of the functional is a \emph{modular} of the Musielak-Orlicz space $V$ and is no longer homogeneous unlike the case of $p$-curves of maximal slope in a Banach space. Hence the construction of solutions to \eqref{proto-eqn} is more like that of a $p$-curve of maximal slope in a metric space, where the homogeneity (or even asymptotically affine growth) is no longer available for the metric; on the other hand,  we have to remark that,  for \eqref{proto-eqn}, the underlying (Musielak-Orlicz) space $V$ still has a linear structure.

To achieve the goal of the present paper, we shall introduce two base spaces: one, $V=L^\phj(\Omega)$, is a Musielak-Orlicz space in  the space variables only, while the other one, $\calV=L^\phj(\Omega\times(0,T))$, keeps that structure with respect to both space and time variables. As in the variable exponent case, however, there is a regularity gap between those vector-valued spaces on $(0,T)$ that can actually be constructed by keeping $V$ as a target space and the space $\calV$. Indeed, the latter is the natural space for weak solutions, but, on the other hand, it \emph{cannot} be viewed as a Lebesgue-Bochner space. In order to bridge this regularity mismatch, we shall develop a subdifferential calculus in a mixed framework and, based on this machinery, we shall prove the main results of this paper, i.e., existence of strong solutions to the Cauchy problem for \eqref{proto-eqn} with maximal regularity under certain assumptions.  It is worth stressing that the subdifferential calculus we shall introduce has an independent interest in itself, and its applications are not at all restricted to dealing with the specific equation \eqref{proto-eqn}.  
We also refer the reader to~\cite{Zhang18} for a different approach to extend Musielak-Orlicz spaces in space variables only to certain space-time spaces, which look more like a generalization of Bochner spaces. 

The paper is organized as follows. In the next section, we shall recall the minimum preliminary material on the theory of modular and Musielak-Orlicz spaces for later use. In Section \ref{sec:main}, we shall present precise assumptions on the operators $\alpha$ and $B$ and state our main result (see Theorem \ref{teo:main} below) regarding existence of strong solutions to the Cauchy problem for \eqref{proto-eqn}. Sections \ref{sec:tools}--\ref{subsec:chain} are devoted to setting up a machinery related with subdifferential calculus in Musielak-Orlicz spaces and to proving a number of auxiliary lemmas. More precisely, Section \ref{sec:tools} is concerned with generic lemmas which hold for Musielak-Orlicz spaces satisfying the $\Delta_2$-condition; in Section \ref{subsec:convex}, the  basic tools of subdifferential calculus will be set up; Section \ref{subsec:embe} provides a suitable extension of the Aubin-Lions lemma; finally, in Section \ref{subsec:chain}, we shall provide a chain-rule for subdifferentials in the Musielak-Orlicz space setting by customizing related notions such as resolvent, Yosida approximation and Moreau-Yosida regularization for convex functionals. The customized chain-rule formula will play a crucial role (exactly in~\eqref{form:17} below) in  the proof of the main result, which will be outlined  in Section \ref{sec:proof}. Moreover, the main result will be generalized in Section \ref{sec:gen} (see Theorem \ref{T:gen} below). The final section exhibits concrete examples of doubly-nonlinear PDEs which fall within the scope of the theory developed 
in the previous sections.

\smallskip
\noindent
{\bf Notation.} For each $t > 0$, we shall often write $u(t)$, which is a function in space and may be an element of a function space, instead of $u(\cdot,t)$ for functions $u=u(x,t)$ in space and time with values in $\R$. Moreover, we shall denote by $C$ a non-negative constant which does not depend on the elements of the corresponding space or set and may vary from line to line. 

\section{Preliminaries}\label{S:ss-MO}

In this section, we shall briefly review the notions of \emph{semimodular space} and of \emph{Musielak-Orlicz space}, and moreover, we shall prove some propositions which will be needed later.

\subsection{Semimodular space}

Let $X$ be a (real) vector space. We start with defining semimodular and semimodular space (see, e.g.,~\cite[Definitions 2.1.1 and 2.1.6]{DHHR}).

\begin{df}[Semimodular]
A functional $\rho : X \to [0,+\infty]$ defined on $X$ is called a \emph{semimodular} on $X$, if $\rho$ satisfies {\rm (i)} $\rho(0)=0$\/{\rm ; (ii)} $\rho(\lambda x)=\rho(x)$ for $x \in X$ and $\lambda \in \R$ with $|\lambda| = 1$\/{\rm ; (iii)} $\rho$ is convex on $X$\/{\rm ; (iv)} for any $x \in X$, the function $\lam \mapsto \rho(\lam x)$ is left-continuous at $\lam = 1$\/{\rm ; (v)} if $\rho(\lam x) = 0$ for any $\lam > 0$, then $x=0$. In addition, $\rho$ is called a \emph{modular} on $X$, if $\rho(x)=0$ implies $x = 0$.
\end{df}

\begin{df}[Semimodular space]
Let $\rho$ be a semimodular (modular, respectively) on $X$. Then
\begin{align*}
X_\rho :=& \left\{ x \in X \colon \lim_{\lam \to 0_+} \rho(\lam x) = 0\right\}\\
=& \big\{ x \in X \colon \rho(\lambda x) < +\infty \ \mbox{ for some } \lambda > 0 \big\}
\end{align*}
is called a \emph{semimodular space} (\emph{modular space}, respectively). The norm $\|\cdot\|_\rho$ of $X_\rho$ is given as the \emph{Luxemburg-type norm},
$$
\|x\|_\rho := \inf \left\{ \lam > 0 \colon \rho(x/\lam) \leq 1 \right\} \ \mbox
{ for } \ x \in X_\rho.
$$
\end{df}

\subsection{Musielak-Orlicz space}\label{Ss:MO}

In the rest of this section, let $\Omega$ be an open set in $\R^N$. The class of Musielak-Orlicz spaces is a subclass of the class of semimodular spaces defined above. On the other hand, it includes \emph{variable exponent Lebesgue spaces} $L^{p(x)}(\Omega)$ as well as standard Lebesgue spaces $L^p(\Omega)$ as a special case (see~\cite[Definitions 2.3.1, 2.3.9 and 2.4.4]{DHHR}).

\begin{df}[Generalized $\Phi$-function and $N$-function]\label{D:Phi-f}
Given $\varphi : \Omega \times [0,+\infty) \to [0,+\infty]$, $\varphi$ is called a \emph{generalized $\Phi$-function} (or \emph{Musielak-Orlicz function}) if the following {\rm (i)} and {\rm (ii)} hold\/{\rm :}
\begin{enumerate}
 \item[\rm (i)] for a.e.~$x \in \Omega$, the function $r \mapsto \varphi(x,r)$ is left-continuous and convex on $[0,+\infty)$ and satisfies $\varphi(x,0)=0$, $\lim_{r \to 0_+}\varphi(x,r)=0$ and $\lim_{r \to +\infty} \varphi(x,r) = +\infty$, that is, $\varphi(x,\cdot)$ is a $\Phi$-function (or Orlicz function);
 \item[\rm (ii)] for every $r \geq 0$, the function $x \mapsto \varphi(x,r)$ is measurable in $\Omega$. 
\end{enumerate}
In addition, if $\varphi(x,r)>0$ for all $r > 0$ and a.e.~$x \in \Omega$, then $\varphi$ is said to be \emph{positive}. A generalized $\Phi$-function $\phj$ is further called a \emph{generalized $N$-function} if it is positive and continuous in $r$ and additionally enjoys
\begin{equation}\label{Nfun}
  \lim_{r\to 0_+}\frac{\phj(x,r)}r = 0, \quad
  \lim_{r\to +\infty}\frac{\phj(x,r)}r = +\infty
\quad \mbox{ for a.e. } x \in \Omega.
\end{equation}
\end{df}

Each generalized $\Phi$-function $\phj$ generates a semimodular $\modu: L^0(\Omega) \to [0,+\infty]$ given by
\begin{equation*}
   \modu(v):= \io \phj(x,|v(x)|)\,\dix \ \mbox{ for } v \in L^0(\Omega),
\end{equation*}
where $L^0(\Omega)$ stands for the set of Lebesgue-measurable functions.
In addition, if $\phj$ is positive, then $\modu$ turns out to be a modular (see~\cite[Lemma 2.3.10]{DHHR}).
As in~\cite[Definition 2.3.11]{DHHR}, we define

\begin{df}[Musielak-Orlicz space]
Let $\phj : \Omega \to \R$ be a generalized $\Phi$-function and let $\modu$ is the semimodular associated with $\phj$. Then the semimodular space $X_{\modu}$ is denoted by $L^\phj(\Omega)$ and called a \emph{Musielak-Orlicz space}, whose norm $\|\cdot\|_{L^\phj(\Omega)}$ is given by the Luxemburg-type norm $\|\cdot\|_{\modu}$ of $X_{\modu}$, that is,
\begin{equation*}
  \| v \|_{L^\varphi(\Omega)} = \| v \|_{\modu}
   = \inf\left\{ \lambda>0 \colon \modu(v/\lam) \le 1 \right\}.
\end{equation*}
\end{df}

We further define 
\begin{align*}
E^\phj(\Omega) &:= \big\{ v\in L^\phj(\Omega) \colon \modu(\lambda v) < +\infty \ \mbox{ for all } \lambda > 0 \big\},\\
L^\phj_{OC}(\Omega) &:= \big\{ v\in L^\phj(\Omega) \colon \phj(\cdot,|v(\cdot)|) \in L^1(\Omega) \big\},
\end{align*}
where $L^\phj_{OC}(\Omega)$ is called a \emph{Musielak-Orlicz class} and these sets 
may not coincide in general, but satisfy the relation,
$$
E^\phj(\Omega) \subset L^\phj_{OC}(\Omega) \subset L^\phj(\Omega)
$$
(see~\cite[Definition 2.5.1]{DHHR}). However, under the \emph{$\Delta_2$-condition} of $\phj$ (see~\cite[Definition 2.4.1]{DHHR}) or \eqref{K2} below), they coincide  with  each other. The Musielak-Orlicz space $L^\phj(\Omega)$ is a Banach space (see~\cite[Theorem 2.3.13]{DHHR}), and moreover, $E^\phj(\Omega)$ is a closed subspace of $L^\phj(\Omega)$.

The Musielak-Orlicz space $L^\phj(\Omega)$ has three notions of ``dual'' spaces, that is, the \emph{associate space} $L^\phj(\Omega)'$ and (topological) \emph{dual space} $L^\phj(\Omega)^*$ of $L^\phj(\Omega)$, and moreover, the Musielak-Orlicz space $L^{\phj^*}(\Omega)$ for the \emph{conjugate function} $\phj^*$ of $\phj$ defined as
\begin{equation}\label{phi*}
\phj^*(x,s) := \sup_{r \geq 0} \left( rs - \phj(x,r)\right) \ \mbox{ for } \ x \in \Omega \ \mbox{ and } \ s \geq 0
\end{equation}
(see~\cite[Definition 2.6.1]{DHHR}). Note that $\phj^*$ is an $N$-function (a generalized $\Phi$-function) if so is $\phj$ (see~\cite[Lemma 2.5.8]{HH},~\cite[Theorem 2.6.8]{DHHR}). Here, the associate space is given by
$$
L^\phj(\Omega)' = \left\{ v \in L^0(\Omega) \colon \|v\|_{L^\phj(\Omega)'} < +\infty\right\}
$$
equipped with norm $\|v\|_{L^\phj(\Omega)'} := \sup \{\int_\Omega |u||v| \, \dix \colon u \in L^\phj(\Omega), \ \|u\|_{L^\phj(\Omega)} \leq 1\}$ for $v \in L^0(\Omega)$ (see~\cite[Definition 2.7.1]{DHHR}). 
Then all these spaces are Banach spaces and enjoy the following relations:
\begin{equation}\label{dual_spaces}
L^{\phj^*}(\Omega) \subset L^\phj(\Omega)' \subset L^\phj(\Omega)^*.
\end{equation}
We also recall the \emph{H\"older inequality} (see~\cite[Lemma 2.6.5]{DHHR}),
\begin{equation}\label{hoelder}
 \int_\Omega |uv| \, \d x \leq 2 \|u\|_{L^\phj(\Omega)} \|v\|_{L^{\phj^*}(\Omega)} \quad \mbox{ for } \ u \in L^\phj(\Omega), \ v \in L^{\phj^*}(\Omega),
\end{equation}
where the constant $2$ cannot be omitted. Here and henceforth, each function $v \in L^{\phj^*}(\Omega)$ may be identified with a bounded linear functional $J_v \in L^\phj(\Omega)^*$ given by 
$$
J_v : u \in L^{\phj}(\Omega) \mapsto J_v(u) := \int_\Omega u(x)v(x) \, \d x,
$$ 
and $J_v$ will also be denoted by $v$, when no confusion can arise (see \S \ref{Ss:dual} below). Here, we stress that $L^{\phj^*}(\Omega)$ cannot be \emph{isometrically} identified with (a subset of) $L^\phj(\Omega)^*$, and hence, the H\"older inequality \eqref{hoelder} seems different from the Schwarz inequality between $L^{\phj}(\Omega)$ and its dual $L^\phj(\Omega)^*$. However, we shall not distinguish the norms of $L^{\phj^*}(\Omega)$ and $L^\phj(\Omega)^*$, unless any serious confusion may arise. Moreover, let us recall \emph{Young's inequality},
\begin{equation}\label{phi-young}
rs \leq \phj(x,r) + \phj^*(x,s) \ \mbox{ for } \ r,s \geq 0 \ \mbox{ and } \ x \in \Omega,
\end{equation}
which follows immediately from \eqref{phi*}. We shall further discuss for which modulars the three spaces of \eqref{dual_spaces} coincide (see \S \ref{Ss:dual} below).

The following fact will be often used throughout the paper; so we give a statement with a proof for the convenience of the reader, although it is well known.

\begin{prop}\label{P:nondec}
Let $\phj : \R \to [0,+\infty]$ be an even and convex function satisfying $\phj(0)=0$. Then for any $r \in \R$ 
the function $\lam \in [0,+\infty) \mapsto \phj(\lam r)$ is non-decreasing. In addition, if $\phj$ is positive,
then the function $\lambda \mapsto \varphi(\lambda r)$ is strictly increasing for $r \neq 0$, whenever $\phj(\lam r)$ is finite.
\end{prop}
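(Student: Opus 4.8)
The plan is to exploit convexity together with the normalization $\varphi(0)=0$ and evenness, reducing everything to a one-variable statement about secant slopes. Fix $r\in\R$; if $r=0$ the claim is trivial since $\varphi(\lambda\cdot 0)=\varphi(0)=0$ for all $\lambda$, so assume $r\neq 0$. First I would handle the case $r>0$ (the case $r<0$ follows by evenness, replacing $r$ with $-r$ and noting $\varphi(\lambda r)=\varphi(|\lambda r|)=\varphi(\lambda|r|)$ when $\lambda\ge 0$). So it suffices to show that $g(\lambda):=\varphi(\lambda r)$ is non-decreasing on $[0,+\infty)$ for $r>0$ fixed.

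The key step is the standard \emph{three-slopes} (or chord-slope monotonicity) consequence of convexity: for a convex function $\varphi$ and points $0\le a<b<c$, one has
\[
\frac{\varphi(b)-\varphi(a)}{b-a}\ \le\ \frac{\varphi(c)-\varphi(a)}{c-a}\ \le\ \frac{\varphi(c)-\varphi(b)}{c-b}.
\]
Apply this with $a=0$: since $\varphi(0)=0$, for any $0<s<t$ we get
\[
\frac{\varphi(s)}{s}\ \le\ \frac{\varphi(t)}{t},
\]
i.e.\ $\lambda\mapsto \varphi(\lambda)/\lambda$ is non-decreasing on $(0,+\infty)$. Now take $0\le\lambda_1<\lambda_2$ and evaluate at $s=\lambda_1 r$, $t=\lambda_2 r$ (both positive when $\lambda_1>0$): from $\varphi(\lambda_1 r)/(\lambda_1 r)\le \varphi(\lambda_2 r)/(\lambda_2 r)$ and $\lambda_1 r<\lambda_2 r$ we obtain $\varphi(\lambda_1 r)\le \varphi(\lambda_2 r)$. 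The boundary case $\lambda_1=0$ is immediate since $\varphi(0)=0\le\varphi(\lambda_2 r)$ (values are in $[0,+\infty]$). This proves the non-decreasing part.

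For the strict monotonicity under positivity, suppose $\varphi$ is positive, fix $r\neq 0$, and suppose $\varphi(\lambda_2 r)<+\infty$ for some $\lambda_2>\lambda_1\ge 0$ (hence $\varphi$ is finite at every $\lambda r$ with $0\le\lambda\le\lambda_2$, by the non-decreasing part just proved). It suffices to treat $r>0$ again. Suppose for contradiction that $\varphi(\lambda_1 r)=\varphi(\lambda_2 r)=:c$. Positivity gives $c>0$ when $\lambda_1>0$; when $\lambda_1=0$ we have $c=0$, but then $\varphi(\lambda_2 r)=0$ with $\lambda_2 r>0$ contradicts positivity directly, so assume $\lambda_1>0$. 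Pick any $\mu$ with $\lambda_2<\mu$ such that $\varphi(\mu r)$ is still finite if possible; more simply, use the chord-slope inequality on $0<\lambda_1 r<\lambda_2 r$: we showed $\varphi(\lambda_1 r)/(\lambda_1 r)\le \varphi(\lambda_2 r)/(\lambda_2 r)$, and with equal numerators $c>0$ and $\lambda_1 r<\lambda_2 r$ this forces $c/(\lambda_1 r)>c/(\lambda_2 r)$, i.e.\ $\varphi(\lambda_1 r)/(\lambda_1 r)>\varphi(\lambda_2 r)/(\lambda_2 r)$, a contradiction. Hence $\varphi(\lambda_1 r)<\varphi(\lambda_2 r)$, and by the same argument applied to any sub-interval, $\lambda\mapsto\varphi(\lambda r)$ is strictly increasing wherever it is finite.

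I do not expect any serious obstacle here; the only mild subtlety is bookkeeping with the value $+\infty$ (ensuring the chord-slope inequalities are only invoked at finite values, which is guaranteed because finiteness propagates downward by the first part) and the reduction from general $r$ to $r>0$ via evenness. The engine is entirely the elementary lemma that $\varphi(t)/t$ is non-decreasing for a convex $\varphi$ vanishing at $0$, which is precisely where $\varphi(0)=0$ is used; positivity upgrades ``$\le$'' to ``$<$'' in the slope comparison and hence gives the strict statement.
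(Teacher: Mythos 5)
Your proof is correct and rests on the same mechanism as the paper's: the chord-slope inequality $\varphi(s)/s\le\varphi(t)/t$ for $0<s<t$ that you invoke is, with $a=0$, exactly the paper's one-line convexity estimate $\varphi(\lambda r)\le(\lambda/\lambda')\varphi(\lambda' r)$ obtained by writing $\lambda r$ as a convex combination of $\lambda' r$ and $0$. The only cosmetic differences are that you package the step as a named lemma and argue strictness by contradiction, whereas the paper reads strictness off directly from $(\lambda/\lambda')<1$ and $\varphi(\lambda' r)>0$.
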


\begin{proof}
Let $0 \leq \lam < \lam' < +\infty$. It then follows that
\begin{align*}
\phj(\lam r) = \phj \left( \frac \lam {\lam'} (\lam' r) + \frac{\lam'-\lam}{\lam'} \,0\right) \leq \frac \lam {\lam'} \phj(\lam'r) \leq \phj(\lam' r) \ \mbox{ for } r \in \R.
\end{align*}
If $\phj$ is positive, i.e., $\phj (r) > 0$ for $r \neq 0$, then $\phj(\lam r) < \phj(\lam' r)$ 
for $r \neq 0$ and $\lambda' > \lambda$. 
\end{proof}

\subsection{Characterization of dual spaces}\label{Ss:dual}

This subsection is devoted to discussing characterization and useful properties of topological dual of Musielak-Orlicz spaces. An important conclusion of this subsection is a variant of the Riesz representation theorem: under certain assumptions (see (v) of Proposition \ref{P:proper} for more details), the topological dual $L^\phj(\Omega)^*$ of $\Lpsot$ can be identified with the Musielak-Orlicz space associated with $\varphi^*$, which is the conjugate function of $\varphi$, that is,
\begin{equation*}
L^\phj(\Omega)^* \simeq L^{\phj^*}(\Omega).
\end{equation*}
In what follows, a generalized $\Phi$-function $\phj = \phj(x,r)$ is said to be \emph{proper} in $\Omega$ (see~\cite[Definition 2.7.8]{DHHR}), if it holds that
\begin{equation*}
S(\Omega) \subset L^\phj(\Omega) \cap L^\phj(\Omega)',
\end{equation*}
where $S(\Omega)$ denotes the set of simple functions (i.e., a finite linear combination of characteristic functions supported over measurable sets of finite measure) defined in $\Omega$. Moreover, $\varphi$ is said to be \emph{locally integrable} in $\Omega$, if for any $\lam > 0$ and measurable set $\omega \subset \Omega$ of finite measure, it holds that
\begin{equation*}
\modu(\lam \chi_\omega) < +\infty,
\end{equation*}
where $\chi_\omega$ stands for the characteristic function supported over $\omega$ 
(see~\cite[Definition 2.5.5]{DHHR}). We need the following
\begin{prop}[Cf.~{\cite[Corollary 2.7.9, Theorems 2.7.4 and 2.7.14]{DHHR}}]
\label{P:proper}
Assume that $\phj$ is a generalized $\Phi$-function on an open set $\Omega$ in $\R^N$. Then the following three conditions are equivalent\/{\rm :} 
\begin{enumerate}
 \item[\rm(i)] $\phj$ is proper in $\Omega$,
 \item[\rm(ii)] $\phj^*$ is proper in $\Omega$,
 \item[\rm(iii)] $S(\Omega) \subset L^\phj(\Omega) \cap L^{\phj^*}(\Omega)$.
\end{enumerate}
Moreover, the following properties are satisfied\/{\rm :}
\begin{enumerate}
 \item[\rm (iv)] If $S(\Omega) \subset L^\phj(\Omega)$, then the associate space $L^\phj(\Omega)'$ coincides with $L^{\phj^*}(\Omega)$. Moreover, it holds that
\begin{equation*}
 \|v\|_{L^{\phj^*}(\Omega)} \leq \|v\|_{L^\phj(\Omega)'} = \|v\|_{L^\phj(\Omega)^*} \leq 2 \|v\|_{L^{\phj^*}(\Omega)} \ \mbox{ for } \ v \in L^{\phj^*}(\Omega).
\end{equation*}
Moreover, it holds that
\begin{align*}
 \modud(v) &= (\modu)^*(v)\\
 &:= \sup_{u \in L^\varphi(\Omega)} \left\{ \langle v,u \rangle_{L^\varphi(\Omega)} - \modu(u) \right\} \ \mbox{ for all } \ v \in L^{\phj^*}(\Omega).
\end{align*}
 \item[\rm (v)] If $\phj$ is proper and locally integrable on $\Omega$ and $E^\phj(\Omega) = L^\phj(\Omega)$, then the {\rm (}topological\/{\rm )} dual space $L^{\phj}(\Omega)^*$ of $L^\phj(\Omega)$ is isomorphic {\rm (}but not isometric{\rm )} to $L^{\phj^*}(\Omega)$.
 \item[\rm (vi)] 
If $\phj^*$ is locally integrable on $\Omega$, then there is a function $\rho=\rho(\omega)$ defined for Lebesgue measurable subsets $\omega \subset \Omega$ of finite measure with values in $[0,+\infty)$ such that $\rho(\omega) \to 0_+$ as $|\omega|\to0_+$ and 
$$
\int_\omega |f| \, \d x \leq \rho(\omega) \|f\|_{L^\phj(\Omega)}
$$
for all $f \in L^\phj(\Omega)$ and Lebesgue measurable subsets $\omega \subset \Omega$. 
  \item[\rm (vii)] Under the same assumptions as in {\rm (v)} {\rm (}i.e., $L^{\varphi}(\Omega)^* \simeq L^{\varphi^*}(\Omega)$\/{\rm )}, the functional $\modud$ coincides with the \emph{convex conjugate} $(\modu)^*$ of $\modu$ on $L^\phj(\Omega)^*$. Moreover, it holds that
$$
(\modud)^*(u) = (\modu)^{**}(u) = \modu(u) \ \mbox{ for  } \ u \in L^\varphi(\Omega).
$$
\item[\rm (viii)] Suppose that $\Omega$ has a finite measure and $T > 0$ is finite. If $\phj$ is proper and locally integrable in $\Omega$, then $\phj = \phj(x,r)$ is also proper and locally integrable in $Q = \Omega \times (0,T)$.
\end{enumerate}
\end{prop}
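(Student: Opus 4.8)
The plan is to obtain (i)--(vii) essentially as restatements of \cite[Corollary 2.7.9, Theorems 2.7.4 and 2.7.14]{DHHR}, inserting only a couple of one-line translations, and to give a short self-contained argument for (viii), which is the genuinely new item. The equivalences (i)--(iii) rest on the conjugation-symmetry of the notion ``proper'': a left-continuous generalized $\Phi$-function satisfies $\phj^{**}=\phj$ (pointwise in $x$), so $\phj$ and $\phj^*$ play interchangeable roles, and once $S(\Omega)\subset\Lpso$ one has the identification $\Lpso'=\Lpsostar$ of part~(iv), which turns $S(\Omega)\subset\Lpso\cap\Lpso'$ into $S(\Omega)\subset\Lpso\cap\Lpsostar$; this is \cite[Corollary 2.7.9]{DHHR}. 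For (iv), the identification $\Lpso'=\Lpsostar$ together with the two-sided norm estimate featuring the H\"older constant $2$ of \eqref{hoelder} is \cite[Theorem 2.7.4]{DHHR}; the middle equality $\|v\|_{\Lpso'}=\|v\|_{\Lpso^*}$ holds because the associate space embeds isometrically into $\Lpso^*$ via $v\mapsto J_v$; and the modular-conjugacy identity $\modud(v)=(\modu)^*(v)$ I would verify directly, ``$\le$'' being Young's inequality \eqref{phi-young} integrated over $\Omega$ and ``$\ge$'' coming from a measurable selection in the pointwise supremum \eqref{phi*} defining $\phj^*$, truncated along an exhaustion of $\Omega$ by sets of finite measure (so that the competitor stays in $\Lpso$) and passing to the limit by monotone convergence.

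For (v) I would invoke the Musielak--Orlicz version of the Riesz representation theorem: properness together with local integrability gives $(E^\phj(\Omega))^*\simeq\Lpsostar$, and the hypothesis $E^\phj(\Omega)=\Lpso$ upgrades this to $\Lpso^*\simeq\Lpsostar$ (\cite[Theorems 2.7.4 and 2.7.14]{DHHR}). For (vi), H\"older's inequality \eqref{hoelder} gives $\int_\omega|f|\,\d x=\io|f|\chi_\omega\,\d x\le2\|f\|_{\Lpso}\|\chi_\omega\|_{\Lpsostar}$, so one takes $\rho(\omega):=2\|\chi_\omega\|_{\Lpsostar}$ and checks $\rho(\omega)\to0_+$ as $|\omega|\to0_+$: local integrability of $\phj^*$ makes $x\mapsto\phj^*(x,1/\eb)$ integrable over sets of finite measure, so by absolute continuity of the Lebesgue integral $\int_\omega\phj^*(x,1/\eb)\,\d x\le1$ --- i.e.\ $\|\chi_\omega\|_{\Lpsostar}\le\eb$ --- as soon as $|\omega|$ is small enough. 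Finally (vii) synthesizes (iv) and (v): through the isomorphism of (v) the pairing $\langle\cdot,\cdot\rangle_{\Lpso}$ identifies $\Lpso^*$ with $\Lpsostar$, whence the conjugate $(\modu)^*$ computed on $\Lpso^*$ equals the functional $\modud$ of (iv), while the biconjugate identity $(\modud)^*=(\modu)^{**}=\modu$ on $\Lpso$ follows from the Fenchel--Moreau theorem, since $\modu$ is convex (by the definition of semimodular), proper ($\modu(0)=0$) and \lsc\ on $\Lpso$ --- the last point via Fatou's lemma (using that norm convergence passes to a.e.\ convergence along a subsequence) together with the lower semicontinuity of $r\mapsto\phj(x,r)$.

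The only item demanding genuine work is (viii), which I also expect to be the main obstacle; the subtlety is that ``local integrability on $\Omega$'' is a condition on subsets of $\Omega$ of \emph{finite} measure and has to be carried over to $Q=\Omega\times(0,T)$, the leverage being precisely the standing assumptions $|\Omega|<+\infty$ and $T<+\infty$, which let every relevant integral over $Q$ be dominated by a single finite constant coming from the choice $\omega=\Omega$. I would view $\phj(x,r)$ as a generalized $\Phi$-function on $Q$, constant in $t$ (measurability in $(x,t)$ and the $\Phi$-function axioms in $r$ being immediate). For local integrability on $Q$: for any measurable $E\subset Q$ with $|E|<+\infty$ and any $\lam>0$, Tonelli's theorem gives
\[
\Modu(\lam\chi_E)=\iint_E\phj(x,\lam)\,\d x\,\d t\le\iint_Q\phj(x,\lam)\,\d x\,\d t=T\io\phj(x,\lam)\,\d x=T\,\modu(\lam\chi_\Omega)<+\infty,
\]
the finiteness being local integrability of $\phj$ on $\Omega$ applied with $\omega=\Omega$ (legitimate since $|\Omega|<+\infty$). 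For properness on $Q$: local integrability just proved gives $S(Q)\subset E^\phj(Q)\subset\Lpsq$, hence by (iv) on $Q$ we have $\Lpsq'=\Lpsqstar$, and it remains to check $S(Q)\subset\Lpsqstar$; since $\phj$ is proper on $\Omega$ and $|\Omega|<+\infty$, the characteristic function $\chi_\Omega$ belongs to $S(\Omega)\subset\Lpsostar$, so $\modud(\lam_0\chi_\Omega)<+\infty$ for some $\lam_0>0$, and then $\Modud(\lam_0\chi_E)\le T\,\modud(\lam_0\chi_\Omega)<+\infty$ for every finite-measure $E\subset Q$, i.e.\ $\chi_E\in\Lpsqstar$. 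Thus $S(Q)\subset\Lpsq\cap\Lpsq'$, which is the properness of $\phj$ on $Q$. The one point worth flagging is that ``proper'' alone does \emph{not} make $\phj^*$ locally integrable, so this last step must be routed through $\chi_\Omega\in S(\Omega)$ rather than through local integrability of $\phj^*$.
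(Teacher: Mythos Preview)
Your proposal is correct and follows essentially the same route as the paper: items (i)--(v) are deferred to the cited results in \cite{DHHR}; (vi) is proved via H\"older's inequality with $\rho(\omega):=2\|\chi_\omega\|_{L^{\phj^*}(\Omega)}$ and the vanishing of this quantity as $|\omega|\to0_+$ drawn from local integrability of $\phj^*$; (vii) combines (iv) and (v); and (viii) is handled by the Fubini/Tonelli estimate $\iint_E\phj(x,\lam)\,\d x\,\d t\le T\int_\Omega\phj(x,\lam)\,\d x$, exploiting $|\Omega|<+\infty$. Your treatment of the properness half of (viii), routed through $\chi_\Omega\in S(\Omega)\subset L^{\phj^*}(\Omega)$, is more explicit than the paper's ``repeating a similar argument'' but amounts to the same computation.
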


\begin{proof}
The equivalence among (i)--(iii) is proved in~\cite[Corollary 2.7.9]{DHHR}. We refer the reader to~\cite[Theorems 2.7.4 and 2.7.14]{DHHR} for the proof of (iv) and (v), respectively (see also~\cite[Remark 2.7.16]{DHHR}). Moreover, (vi) follows immediately from the observation,  by H\"older's  inequality \eqref{hoelder},
    $$
    \int_\omega |f| \, \d x= \int_\Omega \chi_\omega |f| \, \d x \leq 2\|f\|_{L^\phj(\Omega)} \|\chi_\omega\|_{L^{\phj^*}(\Omega)}
    $$
    for any $f \in L^{\phj}(\Omega)$ and measurable subsets $\omega \subset \Omega$. Here we note that $\rho(\omega):=2\|\chi_\omega\|_{L^{\phj^*}(\Omega)} \to 0$ as $|\omega| \to 0$, since $\phj^*$ is locally integrable on $\Omega$ (see~\cite[Proposition 2.5.7]{DHHR}). As for (vii), noting by assumption that $S(\Omega) \subset L^\phj(\Omega)$, we have already seen that 
    $(\modu)^*=\modud$ on $L^{\phj^*}(\Omega)$ by (iv) and $L^{\phj^*}(\Omega) = L^\phj(\Omega)^*$ by (v). Hence it follows that $(\modud)^*=\modu$. Finally, we prove (viii). Let $\lam > 0$ and let $E$ be a measurable subset in $Q$. Set $E_t := \{x \in \Omega \colon (x,t) \in E \} \subset \Omega$. Since $\phj$ is locally integrable in $\Omega$ and $\Omega$ has a finite measure, with the aid of Fubini's theorem, we infer that
    $$
    \iint_{Q} \phj(x,\lam\chi_E(x,t)) \, \dix \dit
    = \int^T_0 \left( \int_{E_t} \phj(x,\lam) \, \dix \right) \dit
    \leq T \int_\Omega \phj(x,\lam) \, \dix < + \infty.
    $$
Hence $\phj(\cdot,\lam \chi_E(\cdot,\cdot))$ belongs to $L^1(Q)$, and therefore, $\phj$ is locally integrable in $Q$.
We can also prove that $\phj$ is proper in $Q$ by repeating a similar argument.
\end{proof}

Finally, we shall discuss the reflexivity and separability of Musielak-Orlicz spaces.
\begin{prop}[Reflexivity and separability of Musielak-Orlicz spaces]\label{P:reflexive}
Let $\phj$ be a proper generalized $\Phi$-function in $\Omega$ such that $\phj$ and $\phj^*$ are locally integrable. 
Suppose also that
\begin{equation}\label{E=L}
E^\phj(\Omega) = L^\phj(\Omega), \quad E^{\phj^*}(\Omega) = L^{\phj^*}(\Omega). 
\end{equation}
Then $L^\phj(\Omega)$ is reflexive and separable {\rm (}so does the dual space{\rm )}.
\end{prop}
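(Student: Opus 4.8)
The plan is to deduce both assertions from Proposition~\ref{P:proper}, exploiting the fact that its hypotheses are symmetric under the involution $\phj\leftrightarrow\phj^*$. First I would verify precisely this symmetry: $\phj^*$ is again a generalized $\Phi$-function (since $\phj$ is), it is proper by the equivalence (i)$\Leftrightarrow$(ii) of Proposition~\ref{P:proper}, it is locally integrable by assumption, and $(\phj^*)^*=\phj$ is locally integrable by assumption; moreover $E^{\phj^*}(\Omega)=L^{\phj^*}(\Omega)$ holds by hypothesis while $E^{(\phj^*)^*}(\Omega)=E^\phj(\Omega)=L^\phj(\Omega)=L^{(\phj^*)^*}(\Omega)$, where I use $\phj^{**}=\phj$. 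The latter identity is legitimate because $\phj(x,\cdot)$ is convex, non-decreasing (Proposition~\ref{P:nondec}) and left-continuous, hence lower semicontinuous, so that the Fenchel--Moreau theorem applies (cf.\ \cite[Theorem~2.6.8]{DHHR}). Consequently every conclusion obtained for $L^\phj(\Omega)$ from these hypotheses transfers verbatim to $L^{\phj^*}(\Omega)$, which will dispose of the parenthetical claim about the dual space.

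For reflexivity I would invoke Proposition~\ref{P:proper}(v): it provides, via the duality pairing $\langle v,u\rangle:=\int_\Omega uv\,\d x$ (cf.\ part~(iv) for the identification of functionals), a linear topological isomorphism $L^{\phj^*}(\Omega)\xrightarrow{\ \sim\ }L^\phj(\Omega)^*$, and, applied to $\phj^*$ (licit by the previous paragraph), an isomorphism $L^\phj(\Omega)\xrightarrow{\ \sim\ }L^{\phj^*}(\Omega)^*$ realized by the \emph{same} pairing. Then, for $u\in L^\phj(\Omega)$ and $\xi\in L^\phj(\Omega)^*$ written as $\xi=J_v$ with $v\in L^{\phj^*}(\Omega)$, the canonical image $\iota(u)\in L^\phj(\Omega)^{**}$ satisfies $\iota(u)(\xi)=\xi(u)=\int_\Omega uv\,\d x$, which is exactly the value at $v$ of the element of $L^{\phj^*}(\Omega)^*$ assigned to $u$ by the second isomorphism. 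Hence, under the identification $L^\phj(\Omega)^{**}\simeq L^{\phj^*}(\Omega)^*$, the canonical embedding $\iota$ coincides with that isomorphism and is therefore onto; thus $L^\phj(\Omega)$ is reflexive (and so is $L^{\phj^*}(\Omega)$, by symmetry or since duals of reflexive spaces are reflexive).

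For separability I would first note that, $\phj$ being proper and locally integrable, the simple functions belong to $L^\phj(\Omega)$, and then show they are dense in $E^\phj(\Omega)$ by a standard two-step truncation: given $v\in E^\phj(\Omega)$ and $v_n:=v\,\chi_{\{|v|\le n\}\cap B_n}$ with $B_n$ the ball of radius $n$, one has $\modu(\lambda(v-v_n))\to0$ for every $\lambda>0$ by dominated convergence (the majorant $\phj(\cdot,\lambda|v|)$ being integrable since $v\in E^\phj(\Omega)$), hence $\|v-v_n\|_{L^\phj(\Omega)}\to0$; next each bounded, boundedly supported $v_n$ is approximated uniformly by simple functions, and local integrability on the fixed finite-measure support turns this into $L^\phj(\Omega)$-convergence. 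Since $E^\phj(\Omega)=L^\phj(\Omega)$, the simple functions are dense in $L^\phj(\Omega)$. A countable dense subset is then produced by allowing only rational coefficients and sets from a fixed countable algebra (e.g.\ finite unions of cubes with rational vertices meeting $\Omega$), estimating $\|\chi_A-\chi_B\|_{L^\phj(\Omega)}=\|\chi_{A\triangle B}\|_{L^\phj(\Omega)}$ through $|\omega|\to0\Rightarrow\|\chi_\omega\|_{L^\phj(\Omega)}\to0$, which holds by local integrability (cf.\ \cite[Proposition~2.5.7]{DHHR}). Separability of the dual space follows again from the $\phj\leftrightarrow\phj^*$ symmetry.

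The step I expect to demand the most care is the reflexivity argument: one must make sure that the two isomorphisms furnished by Proposition~\ref{P:proper}(v) are both implemented by the \emph{same} bilinear pairing, so that their composition is literally the canonical embedding and not merely some abstract isomorphism $L^\phj(\Omega)^{**}\simeq L^\phj(\Omega)$ (which alone would not yield reflexivity). This hinges on the self-duality of the hypotheses established in the first paragraph, hence ultimately on $\phj^{**}=\phj$. The remaining steps are routine, although the separability argument does call upon local integrability twice---once for the truncation on finite-measure sets and once for the smallness of $\|\chi_\omega\|_{L^\phj(\Omega)}$.
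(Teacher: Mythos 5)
Your argument is correct, and it is worth pointing out that it is more than the paper itself does: the paper's ``proof'' of this proposition consists entirely of a citation to~\cite[Lemma~2.7.17, Corollary~2.7.18 and Theorem~2.5.10]{DHHR} (together with a forward reference to Lemma~\ref{L:MO-chk} for how to verify~\eqref{E=L} under $\Delta_2$). So you have reconstructed from scratch what the paper takes for granted. Your three ingredients are exactly the right ones. The symmetry of the hypotheses under $\phj\leftrightarrow\phj^*$, resting on $\phj^{**}=\phj$ via the $\Phi$-function version of Fenchel--Moreau, is precisely what lets one apply Proposition~\ref{P:proper}(v) twice. Your insistence that both isomorphisms in the reflexivity step be realized by the \emph{same} bilinear pairing $\langle v,u\rangle=\int_\Omega uv\,\d x$ (so that the composite is literally the canonical embedding $\iota$ and not merely an abstract isomorphism of $L^\phj(\Omega)$ with its bidual) is the crucial point that many sketches gloss over, and you handle it correctly. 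For separability, the two-step truncation $v\mapsto v\chi_{\{|v|\le n\}\cap B_n}$ followed by uniform approximation on the finite-measure support is the standard route; note that the dominated-convergence step needs the majorant $\phj(\cdot,\lambda|v|)\in L^1(\Omega)$, which is exactly what $v\in E^\phj(\Omega)=L^\phj(\Omega)$ supplies, and the passage from uniform to modular convergence uses local integrability on a fixed finite-measure set, which is the form in which~\cite[Proposition~2.5.7]{DHHR} is actually stated---your cubes should be restricted to a fixed bounded region $B_{n+1}$ at each stage so that the smallness of $\|\chi_\omega\|_{L^\phj(\Omega)}$ for $|\omega|\to0$ is applied within a set of finite measure (relevant since $\Omega$ is not assumed to have finite measure in this section). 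With that small bookkeeping caveat, the proof is complete and faithful to what~\cite{DHHR} does internally.
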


We refer the reader to~\cite[Lemma 2.7.17, Corollary 2.7.18 and Theorem 2.5.10]{DHHR} for a proof.
Moreover, we note that \eqref{E=L} can be checked when $\phj$ and $\phj^*$ fulfill the $\Delta_2$-condition (see Lemma \ref{L:MO-chk}).

\section{Assumptions and main result}
\label{sec:main}

As a more precise formulation of \eqref{proto-eqn}, we shall consider
\begin{equation}\label{eqn}
  \alpha(x,u_t(x,t)) + b(x,t) = f(x,t), \quad b(t) \in B(u(t)) \ \text{ for } \ x \in \Omega, \ t \in (0,T),
\end{equation}
where $u(t) := u(\cdot,t)$ and $B$ is a (possibly) multi-valued operator from a certain Banach space $V$ into the dual space $V^*$; hence $B(u(t))$ is a subset of $V^*$ and $b(t) = b(\cdot,t)$ is one of its elements. In this section, we shall present basic assumptions on the nonlinear terms involved in equation \eqref{eqn} and also provide some remarks and related propositions for later use. They will permit us to set up a functional analytic framework for the problem. Finally, the main result of the present paper will be stated at the end of the section. In the rest of the paper, we always assume that $\Omega$ is an open set in $\R^N$ of finite measure, i.e., $|\Omega|<+\infty$.

\subsection{Assumptions for $\alpha(x,r)$}

Let us start with giving assumptions for the nonlinear operator acting on the time-derivative.
\begin{assu}[Hypotheses on $\alpha$ and $\phj$]\label{hp:alpha}{\rm
The operator $\alpha : \Omega \times \R \to [0,+\infty)$ is \emph{single-valued} and written  in  the form
$$
\alpha(x,r) = \partial \phj(x,r),
$$
where $\partial \phj$ stands for the \emph{subdifferential} with respect to the second variable $r$ of a function $\phj:\Omega \times \RR \to [0,+\infty)$, that is,
$$
\partial \phj(x,r) := \left\{ \xi \in \R \colon \phj(x,r') - \phj(x,r) \geq \xi (r' - r) \ \mbox{ for any } \, r' \in \R \right\}
$$
for a.e.~$x \in \Omega$ and $r \in \R$. Let $\phj^* : \Omega \times \R \to (-\infty,+\infty]$ be the \emph{convex conjugate} of $\phj$ with respect to the second variable, that is,
\begin{equation}\label{cc}
\phj^*(x,r) := \sup_{s \in \R} \left\{ rs - \varphi(x,s)\right\}
\ \mbox{ for } \ x \in \Omega \ \mbox{ and } \ r \in \R.
\end{equation}
The following conditions (a)--(c) hold:
\begin{enumerate}
 \item[(a)] For a.e.~$x\in \Omega$, $\phj(x,\cdot)$ is strictly convex, lower semicontinuous and even (i.e., $\phj(x,r) = \phj(x,|r|)$ for $r \in \R$), and moreover, $\partial \phj(x,0) = 0$.
 \item[(b)] \begin{enumerate}
	     \item[(i)] For all~$r\in \RR$, $\phj(\cdot,r)$ is measurable in $\Omega$. 
	     \item[(ii)] It holds that $\phj(\cdot,1), \phj^*(\cdot,1) \in L^1(\Omega)$.
	    \end{enumerate}
\item[(c)] Both $\phj$ and $\phj^*$ satisfy the \emph{$\Delta_2$-condition}
uniformly in $\Omega$, i.e., there exist ${K} \ge 2$
such that 
\begin{equation}\label{K2}
 \phj(x,2r)\le K \phj(x,r), \quad
  \phj^*(x,2r)\le {K} \phj^*(x,r)
\end{equation}
for all $r\in [0,+\infty)$ and a.e.~$x \in \Omega$.
\end{enumerate}
}\end{assu}

Here $\phj$ is also said to satisfy the \emph{$\nabla_2$-condition} if $\phj^*$ satisfies the $\Delta_2$-condition (see, e.g.,~\cite{HH}). Hence the condition (c) above means that $\phj$ satisfies both the $\Delta_2$- and the $\nabla_2$-conditions.

Let us here give several remarks. 
\begin{enumerate}
 \item[(i)] In what follows, due to the symmetry, we may often regard $\phj(x,\cdot)$ as a function defined only on the half-line $[0,+\infty)$ and identify $\phj(x,r)$ with $\phj(x,|r|)$ for $r \in \R$. Indeed, such a setting is more consistent with the theory of Musielak-Orlicz space (see \S \ref{Ss:MO} and~\cite{DHHR}). On the other hand, the original setting is fitter to the subdifferential calculus. So we shall not change the present notation. The convex conjugate defined in~\eqref{cc} coincides with the notion of the conjugate generalized $\Phi$-function defined by \eqref{phi*} for any $r \geq 0$, and therefore, we do not distinguish the notation. 
 \item[(ii)] It is well known (see, e.g., the monographs \cite{barbu,Br}) that subdifferentials form a subclass of maximal monotone graphs, and moreover, in general, they can be multi-valued. However, for the sake of simplicity, throughout the paper, we assume that $\alpha(x,\cdot) = \partial \varphi(x,\cdot)$ is single-valued for almost every $x\in \Omega$. Note that this fact guarantees that $r\mapsto \phj(x,r)$ is differentiable (and therefore, continuous) on $\RR$ for almost every $x\in \Omega$. Moreover, under Assumption \ref{hp:alpha}, $r\mapsto \alpha(x,r)$ is continuous in $\R$ due to the maximality of $\alpha(x,\cdot)$ and the $\Delta_2$-condition (see Remark \ref{R:al-conti} below for a proof). Hence one can assume $\phj$ to be of class $C^1$ (instead of the lower semicontinuity) in the second variable in Assumption \ref{hp:alpha} without loss of generality. 
 \item[(iii)] It also follows from~$(a)$ that $\alpha(x,\cdot)$ is odd, and $\phj = \phj(x,r)$ can be supposed to be positive (see Definition \ref{D:Phi-f}) and vanish at $r=0$ without loss of generality. Hence in order to check $\phj$ being a generalized $\Phi$-function, it only remains to prove $\lim_{r \to 0_+}\varphi(x,r)=0$ and $\lim_{r \to +\infty} \varphi(x,r) = +\infty$, which will be discussed in Lemma \ref{L:Nfun} below. Moreover, (ii) of (b) will turn out to be equivalent to the property that $\phj$ is proper in $\Omega$ under the present setting (see Lemma \ref{L:MO-chk} and Remark \ref{R:MO-chk} below for details).
\end{enumerate}

Moreover, we have
\begin{lem}\label{L:phj*}
Let {\rm Assumption \ref{hp:alpha}} hold. Then the conjugate function $\phj^*(x,r)$ is finite for $r \in \R$ and for a.e.~$x \in \Omega$. Moreover, $\phj^*$ satisfies all the conditions in {\rm Assumption \ref{hp:alpha}} for $\phj$.
\end{lem}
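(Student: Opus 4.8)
\textbf{Proof proposal for Lemma \ref{L:phj*}.}

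The plan is to verify each of the requirements in Assumption \ref{hp:alpha} for $\phj^*$ in turn, exploiting the self-duality of the structure conditions under conjugation. First I would address finiteness: since $\phj^*$ satisfies the $\Delta_2$-condition by (c), it follows from \cite[Lemma 2.3.10 and the surrounding theory]{DHHR} (or directly from iterating \eqref{K2}) that $\phj^*(x,r)<+\infty$ for every $r\in\R$ and a.e.\ $x\in\Omega$; alternatively, one notes that $\Delta_2$ for $\phj^*$ together with $\phj^*(\cdot,1)\in L^1(\Omega)$ from (b)(ii) forces local finiteness everywhere on the real line by the standard doubling argument $\phj^*(x,2^k)\le K^k\phj^*(x,1)$.

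Next I would check condition (a) for $\phj^*$. Evenness is immediate from \eqref{cc} because $\phj(x,\cdot)$ is even, so $\phj^*(x,r)=\sup_s(rs-\phj(x,s))=\sup_s(-rs-\phj(x,s))=\phj^*(x,-r)$. Convexity and lower semicontinuity of $\phj^*(x,\cdot)$ hold automatically as a supremum of affine functions. For strict convexity of $\phj^*$, the key point is that strict convexity of $\phj$ is equivalent to differentiability of $\phj^*$, and — crucially — the $\Delta_2$-condition on $\phj$ (which is part of (c)) guarantees that $\phj$ is itself differentiable, hence $\phj^*$ is strictly convex; this is the usual duality $\phj$ smooth $\Leftrightarrow$ $\phj^*$ strictly convex for $N$-functions. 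Here I would invoke the single-valuedness hypothesis on $\alpha(x,\cdot)=\partial\phj(x,\cdot)$, which (as remarked after Assumption \ref{hp:alpha}) already yields differentiability of $\phj(x,\cdot)$. Finally $\partial\phj^*(x,0)=0$: since $\phj(x,\cdot)\ge 0=\phj(x,0)$, $0$ minimizes $\phj(x,\cdot)$, so $0\in\partial\phj(x,0)$, which by Fenchel duality is equivalent to $0\in\partial\phj^*(x,0)$; and by what we just showed $\partial\phj^*$ is single-valued, so $\partial\phj^*(x,0)=\{0\}$.

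For condition (b)(i), measurability of $x\mapsto\phj^*(x,r)$ follows from measurability of $x\mapsto\phj(x,s)$ by writing the supremum in \eqref{cc} over rational $s$ (using continuity of $\phj(x,\cdot)$, which holds by the remark on single-valuedness); this is the standard measurability-of-the-conjugate argument and is also recorded in \cite[Theorem 2.6.8]{DHHR}. Condition (b)(ii) for $\phj^*$ requires $\phj^*(\cdot,1)\in L^1(\Omega)$ and $(\phj^*)^*(\cdot,1)=\phj(\cdot,1)\in L^1(\Omega)$; the first is exactly the second half of (b)(ii) for $\phj$, and the second is the biconjugation identity $\phj^{**}=\phj$ (valid since $\phj(x,\cdot)$ is convex, lsc, proper) combined with the first half of (b)(ii). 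Condition (c) for $\phj^*$ is symmetric: we need $\phj^*$ and $(\phj^*)^*=\phj$ to both satisfy $\Delta_2$ uniformly, and these are precisely the two inequalities in \eqref{K2} with the same constant $K$. This completes all the verifications.

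I do not expect any serious obstacle here — the lemma is essentially a bookkeeping exercise in the symmetry of the hypotheses under conjugation. The one place that needs a little care, rather than being fully automatic, is the strict convexity of $\phj^*$: one must explicitly use that the $\Delta_2$-condition on $\phj$ (not just its convexity) upgrades $\phj$ to a differentiable function, so that its conjugate is strictly convex. I would state this duality carefully, referencing the relevant characterization of differentiability versus strict convexity for $\Phi$-functions (e.g.\ \cite{HH} or standard convex-analysis facts), and invoke the earlier remark that single-valuedness of $\alpha(x,\cdot)$ already delivers the needed smoothness of $\phj(x,\cdot)$.
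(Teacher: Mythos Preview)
Your proposal is correct and takes essentially the same approach as the paper: both verify each clause of Assumption~\ref{hp:alpha} for $\phj^*$ via biconjugation, the symmetry of condition~(c), and the convex-analytic duality between strict convexity and differentiability, using that $\alpha=\partial\phj$ is single-valued and strictly increasing so that $\alpha^{-1}=\partial\phj^*$ is too. One small correction: the $\Delta_2$-condition on $\phj$ does \emph{not} by itself yield differentiability of $\phj$---your subsequent appeal to the single-valuedness of $\alpha$ is the correct justification---and the paper obtains finiteness of $\phj^*$ from positivity of $\phj$ (ruling out $\phj^*=I_{\{0\}}$) rather than from (b)(ii), but either route works.
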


\begin{proof}
For a.e.~$x \in \Omega$, $\phj^*(x,\cdot)$ is lower semicontinuous and convex in $\R$ (see, e.g.,~\cite[\S 1.4]{FA}) and, for $r \in \R$, $\phj^*(\cdot,r)$ is measurable in $\Omega$ due to (i) of (b) in Assumption \ref{hp:alpha} (for $\phj$). Moreover, under (a), one can check that $\varphi^*(x,0)=0$ and $\varphi^*(x,\cdot)$ is non-negative and even. The condition {(c)} along with (a) implies $\phj^*(x,r) < +\infty$ for any $r \geq 0$ and a.e.~$x \in \Omega$ (hence $\phj^*(x,\cdot)$ is continuous on $[0,+\infty)$ for a.e.~$x \in \Omega$). Indeed, we immediately have an alternative: either $\phj^*$ is finite everywhere, or $\phj^* = I_{\{0\}}$, i.e., 
the indicator function supported on the origin $\{0\}$. However, the latter condition implies $\phj(x,\cdot) = \phj^{**}(x,\cdot) \equiv 0$, which contradicts the positivity of $\phj(x,\cdot)$. Furthermore, {(c)} also ensures that $\phj^*(x,\cdot)$ is positive, that is, $\phj^*(x,r) > 0$ for any $r > 0$; otherwise, we have $\phj^*(x,\cdot) \equiv 0$ by {(c)} (see also Proposition \ref{P:nondec}). However, it implies $\phj(x,\cdot) = \phj^{**}(x,\cdot) \equiv I_{\{0\}}$, which is a contradiction to (a). Furthermore, since $\phj$ is strictly convex (and of class $C^1$ in $\R$), we find that $\alpha(x,\cdot)$ is strictly increasing and continuous in $\R$. Therefore the inverse $\alpha^{-1}(x,\cdot) = \partial \phj^*(x,\cdot)$ of $\alpha(x,\cdot)$ is also single-valued, strictly increasing and continuous in $\R$. Hence $\phj^*(x,\cdot)$ turns out to be strictly convex and of class $C^1$ in $\R$. Moreover, we see that $\partial \phj^*(x,0) = 0$.
\end{proof}

\beos\label{single}
It seems possible to extend most of results in the present paper to the case where $\alpha(x,\cdot)$ is a multi-valued operator. On the other hand, this extension may involve a quite relevant  amount of additional technical work. In view of the fact that the present setting is already rather complicated, we prefer to  focus  on the single-valued case only.  
\eddos

Now, we shall discuss some important consequences of Assumption \ref{hp:alpha},
which will be used in order to properly formulate the main result of the present paper. In particular, we observe that conditions (a) and {(c)} imply a superlinear growth both of $\varphi(x,\cdot)$ and the conjugate $\varphi^*(x,\cdot)$; indeed, the constant $K_0$ in the following lemma will be chosen to be \emph{strictly} greater than $2$.
\begin{lem}\label{lemma:1x} 
 Let\/ {\rm (a)} and {\rm {(c)}} of {\rm Assumption~\ref{hp:alpha}} hold. Then there exists $K_0 > 2$ such that 
 \begin{equation}\label{jlow}
   \phj(x,2r) \ge K_0 \phj(x,r), \quad
   \phj^*(x,2s) \ge K_0 \phj^*(x,s)
 \end{equation}
for all $r,s \ge 0$ and a.e.~$x \in \Omega$.
\end{lem}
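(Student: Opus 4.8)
The plan is to deduce the strict inequality \eqref{jlow} from the $\Delta_2$-condition \eqref{K2} together with the strict convexity and positivity of $\phj$ (and, by Lemma \ref{L:phj*}, of $\phj^*$), exploiting a \emph{strict} form of convexity in the step that produces the factor $2$ from $\phj(x,2r)$. First I would observe that it suffices to prove the assertion for $\phj$ alone: once $\phj(x,2r)\ge K_0\phj(x,r)$ is established using only properties (a) and (c), the same conclusion for $\phj^*$ follows by applying the statement to $\phj^*$ in place of $\phj$, which is legitimate because Lemma \ref{L:phj*} guarantees that $\phj^*$ also satisfies (a) and (c).

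The core of the argument is the following elementary convexity estimate for a fixed $x$ (omitted from the notation) and fixed $r>0$. Write $\phj(x,r)=\phj\bigl(\tfrac12\cdot 2r+\tfrac12\cdot 0\bigr)$; by convexity this is $\le\tfrac12\phj(2r)$, which is exactly the trivial bound $\phj(2r)\ge 2\phj(r)$. To upgrade this to a constant strictly larger than $2$, I would combine it with a \emph{second} convexity inequality obtained by interpolating the point $r$ not between $2r$ and $0$ but between $2r$ and a strictly positive point, and use strict convexity together with the $\Delta_2$-bound $\phj(2r)\le K\phj(r)$ to close the gap quantitatively. Concretely, the plan is: for $0\le r\le s$ write $r = \tfrac rs\, s + (1-\tfrac rs)\cdot 0$ to get $\phj(r)\le \tfrac rs\phj(s)$; then one shows that the ratio $\phj(2r)/\phj(r)$ is bounded below by some $K_0>2$ uniformly in $r>0$ and a.e.\ $x$. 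The key point is that if for some sequence $r_n$ the ratios $\phj(x,2r_n)/\phj(x,r_n)\to 2$, then, using the $\Delta_2$-bounds to control the behaviour and the strict convexity to rule out affine behaviour of $\phj(x,\cdot)$ on any interval $[0,2r]$, one reaches a contradiction — an affine function through the origin with $\phj(x,2r)=2\phj(x,r)$ would force $\phj(x,\cdot)$ to be linear near the origin, contradicting strict convexity (equivalently, contradicting $\partial\phj(x,0)=0$ combined with positivity). An alternative and perhaps cleaner route is to argue directly from \eqref{K2}: iterating, $\phj(x,2^k r)\le K^k\phj(x,r)$, while convexity of $\phj$ with $\phj(x,0)=0$ gives $\phj(x,2^k r)\ge 2^k\phj(x,r)$; the asymptotic mismatch as $k\to\infty$ between a function of at-least-linear growth and the doubling rate forces the one-step ratio to exceed $2$. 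I expect the authors in fact to pick $K_0 = K/(K-1)\cdot(\text{something})$ or to extract $K_0$ from a single application of strict convexity on the segment from $0$ to $2r$ with an intermediate node, obtaining an explicit $K_0$ depending only on $K$.

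The step I expect to be the main obstacle is making the improvement from $2$ to a constant \emph{strictly} greater than $2$ that is, crucially, \emph{uniform} in $x$ and in $r\in[0,+\infty)$. The trivial convexity bound gives exactly $2$ with no slack, and strict convexity at a single point only gives a strict inequality for that particular $r$, not a uniform gap; so the delicate part is converting the qualitative "strictly convex, positive, $\Delta_2$" hypotheses into a quantitative uniform lower bound $K_0>2$. I would handle this by a compactness-free argument: fix $r>0$ and $x$, use the three-point inequality $\phj(x,r)\le\lambda\phj(x,2r)+(1-\lambda)\phj(x,r/ (2(1-\lambda)))$ for a suitable $\lambda\in(0,1)$ chosen so that the middle node equals $r$, i.e.\ $\lambda\cdot 2r + (1-\lambda)\cdot \tfrac{r}{2(1-\lambda)} = r$, then bound the last term using \eqref{K2} (since its argument is a fixed multiple of $r$) and solve the resulting inequality for $\phj(x,2r)/\phj(x,r)$, obtaining a bound of the form $K_0 = 2 + c(K) > 2$ where $c(K)>0$ depends only on the $\Delta_2$-constant $K$. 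Since every estimate in this chain uses only $\phj(x,0)=0$, convexity, and the uniform $\Delta_2$-bound \eqref{K2}, the resulting $K_0$ is uniform in $x$ a.e.\ and in $r\ge0$, which is precisely what \eqref{jlow} asserts; the companion inequality for $\phj^*$ then follows as noted above by invoking Lemma \ref{L:phj*}.
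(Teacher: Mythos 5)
Your reduction to the case of $\phj$ alone (invoking Lemma~\ref{L:phj*} for $\phj^*$) is fine, and you are right that the real difficulty is upgrading the trivial convexity bound $\phj(x,2r)\ge 2\phj(x,r)$ to a constant strictly larger than $2$, \emph{uniformly} in $r$ and $x$. But neither of the mechanisms you offer actually closes that gap, and neither makes essential use of the $\Delta_2$-condition for $\phj^*$ --- which is fatal, because the inequality $\phj(x,2r)\ge K_0\phj(x,r)$ with $K_0>2$ is, for $N$-functions, essentially \emph{equivalent} to $\phj^*$ satisfying $\Delta_2$, so any proof has to spend that hypothesis somewhere. Your ``iteration'' route fails at once: from $\phj(x,2^kr)\le K^k\phj(x,r)$ and $\phj(x,2^kr)\ge 2^k\phj(x,r)$ there is no contradiction when $K\ge 2$, so no information about the one-step ratio can be extracted. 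Your three-point interpolation also stalls: solving $\lambda\cdot 2r+(1-\lambda)\cdot\tfrac{r}{2(1-\lambda)}=r$ forces $\lambda=\tfrac14$ and middle node $\tfrac{2r}{3}$, giving $\phj(x,r)\le\tfrac14\phj(x,2r)+\tfrac34\phj(x,\tfrac{2r}{3})$; to proceed you would need $\phj(x,\tfrac{2r}{3})\le c\,\phj(x,r)$ with $c<1$ uniformly, but the $\Delta_2$-bound \eqref{K2} for $\phj$ only controls $\phj$ going \emph{up} in the argument (and monotonicity gives only $c=1$), while a uniform bound with $c<1$ is precisely a reformulation of what you are trying to prove. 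So the argument is circular. Strict convexity, as you yourself note, gives only a pointwise, non-uniform strict inequality and cannot supply the gap.

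The paper's proof works because it brings in the conjugate function in an essential, quantitative way. Taking $s=\alpha(x,r)\in\partial\phj(x,r)$, the subdifferential inequality gives $\phj(x,2r)\ge\phj(x,r)+rs$, and the Fenchel--Moreau \emph{identity} at the conjugate pair gives $rs=\phj(x,r)+\phj^*(x,s)$, whence $\phj(x,2r)\ge 2\phj(x,r)+\phj^*(x,s)$. The extra term $\phj^*(x,s)$ is then bounded below via the subdifferential inequality for $\phj^*$ at $s$ together with the $\Delta_2$-bound $\phj^*(x,2s)\le K\phj^*(x,s)$, which yields $\phj^*(x,s)\ge\phj(x,r)/(K-2)$ and hence $K_0=2+\tfrac1{K-2}$. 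The crucial ingredients you are missing are the conjugate pairing through $s=\alpha(x,r)$ and the deployment of $\Delta_2$ for $\phj^*$; without them there is no way to convert the qualitative hypotheses into the uniform strict improvement that \eqref{jlow} asserts.
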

\begin{proof}
We shall first prove the assertion for $\varphi$. Let $r > 0$ and let $s = \alpha(x,r) \in \partial \varphi(x,r)$ for a generic $x \in \Omega$. Thanks to the definition of subdifferential, we have
\begin{equation}\label{low21}
  \phj(x,2r) \ge \phj(x,r) + s ( 2r - r )
   = \phj(x,r) + rs.
\end{equation}  
On the other hand, the Fenchel-Moreau identity reads,
\begin{equation}\label{low22}
  rs = \phj(x,r) + \phj^*(x,s).
\end{equation}  
Combining these facts, we obtain
\begin{equation}\label{low23}
  \phj(x,2r) \ge 2 \phj(x,r) + \phj^*(x,s).
\end{equation}  
Now, noting that $r \in \partial \varphi^*(x,s)$ (indeed, $\partial \varphi^*(x,\cdot)$ coincides with the inverse map of $\partial \varphi(x,\cdot)$), we derive analogously to \eqref{low21} that
\begin{equation*}
  \phj^*(x,s) \le \phj^*(x,2s) + r ( s - 2s )
   = \phj^*(x,2s) - rs,
\end{equation*}  
whence, by \eqref{K2},
\begin{equation*}
  \phj^*(x,s) \le {K} \phj^*(x,s) - rs.
\end{equation*}  
Consequently, invoking \eqref{low22} again, one observes that
\begin{equation}\label{low26}
  \phj^*(x,s) \ge \frac{rs}{{K} - 1}
   = \frac1{{K} - 1} \left[ \phj(x,r) + \phj^*(x,s) \right].
\end{equation}  
Substituting \eqref{low26} into \eqref{low23}, we infer that
\begin{equation*}
  \phj(x,2r) \ge \left( 2 + \frac1{{K} - 2} \right) \phj(x,r),
\end{equation*}  
whence the assertion follows. The assertion for $\varphi^*$ can be verified in the same manner. 
\end{proof}

For later use (see \S \ref{sec:ex}), we remark that the first inequality of \eqref{jlow} also implies the $\Delta_2$-condition for $\phj^*$.
\begin{lem}\label{L:Del2-inv}
Let {\rm (a)} of {\rm Assumption \ref{hp:alpha}} hold and assume that there exists a constant $K_0 > 2$ such that
$$
\phj(x,2r) \geq K_0 \phj(x,r) \quad \mbox{ for } \ r \geq 0 \ \mbox{ and a.e. }  x \in \Omega.
$$
Then $\phj^*$ satisfies the $\Delta_2$-condition.
\end{lem}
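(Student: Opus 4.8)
\emph{Approach.} The hypothesis is exactly the $\nabla_2$-condition for $\phj$, and the assertion is the classical duality ``$\phj\in\nabla_2\Rightarrow\phj^*\in\Delta_2$''. I would prove it in the same spirit as Lemma~\ref{lemma:1x}, combining the Fenchel--Young (in)equalities at contact points with the subdifferential inequality, but adding one ingredient: the \emph{iterated} form of the hypothesis, which is unavoidable since the naive one-step manipulation only relates $\phj^*(x,2s)$ to itself. Fix a.e.~$x\in\Omega$ and suppress it from the notation. By {\rm (a)} and the standing conventions on $\phj$ (cf.~the remarks after Assumption~\ref{hp:alpha}), $\phj(x,\cdot)$ is finite, continuous, convex, even, nonnegative, vanishes at $0$ and has $0\in\partial\phj(x,0)$; iterating $\phj(x,2r)\ge K_0\phj(x,r)$ gives $\phj(x,r)/r\to+\infty$ as $r\to+\infty$, while $0\in\partial\phj(x,0)$ gives $\phj(x,r)/r\to0$ as $r\to0_+$. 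Arguing as in the proof of Lemma~\ref{L:phj*}, $\phj^*(x,\cdot)$ is then finite, convex, even, nonnegative, vanishes at $0$ and is \emph{positive} on $(0,+\infty)$; by this last property it suffices to prove $\phj^*(x,2s)\le K\phj^*(x,s)$ for an arbitrary fixed $s>0$. Since $\phj(x,\cdot)$ is continuous and superlinear, the suprema defining $\phj^*(x,s)$ and $\phj^*(x,2s)$ are attained at some $r_1>0$, $r_2>0$ (positivity since $\partial\phj(x,0)=\{0\}$), so that $s\in\partial\phj(x,r_1)$, $2s\in\partial\phj(x,r_2)$, the equalities $sr_1=\phj(x,r_1)+\phj^*(x,s)$ and $2sr_2=\phj(x,r_2)+\phj^*(x,2s)$ hold, and $r_1\le r_2$ by monotonicity of $\partial\phj(x,\cdot)$.

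\emph{Key step.} The heart of the argument is the bound $r_2\le\lambda r_1$ with $\lambda=\lambda(K_0)$. Iterating the hypothesis yields $\phj(x,r_2/2^n)\le K_0^{-n}\phj(x,r_2)$; the subdifferential inequality at $r_2$ gives $\phj(x,r_2)\le 2sr_2$; and the subdifferential inequality at $r_1$, together with $\phj\ge0$, gives $\phj(x,r_2/2^n)\ge s(r_2/2^n-r_1)$ whenever $r_2/2^n>r_1$. Combining these, $r_2(2^{-n}-2K_0^{-n})\le r_1$, and choosing $n=n_0$ so large that $(2/K_0)^{n_0}\le\tfrac{1}{4}$ --- possible since $K_0>2$, with $n_0$ depending on $K_0$ only --- forces $r_2\le 2^{n_0+1}r_1=:\lambda r_1$ (the subcase $r_2/2^{n_0}\le r_1$ being trivial).

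\emph{Conclusion.} Applying the subdifferential inequality at $r_1$ to the point $r_1/2$ and using $\phj(x,r_1)\ge K_0\phj(x,r_1/2)$ gives $\phj(x,r_1)\le\frac{K_0}{2(K_0-1)}sr_1$, the coefficient being $<1$ precisely because $K_0>2$; hence $sr_1\le\frac{2(K_0-1)}{K_0-2}\phj^*(x,s)$ by the first Fenchel--Young equality. On the other hand, the subdifferential inequality at $r_1$ evaluated at $r_2$ together with the second Fenchel--Young equality gives $\phj^*(x,2s)\le sr_2+\phj^*(x,s)$. Combining with the key step and the last bound,
\[
\phj^*(x,2s)\ \le\ \Bigl(1+\lambda\,\tfrac{2(K_0-1)}{K_0-2}\Bigr)\phj^*(x,s),
\]
which is~\eqref{K2} for $\phj^*$ with a constant $K=K(K_0)$; and $K$ may be taken $\ge2$, since convexity of $\phj^*(x,\cdot)$ together with $\phj^*(x,0)=0$ already force $\phj^*(x,2s)\ge2\phj^*(x,s)$.

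\emph{Main obstacle.} The genuinely nontrivial point is the key step: pure Fenchel--Young/subdifferential bookkeeping (as in Lemma~\ref{lemma:1x}) does not suffice here, since it can only relate $\phj^*(x,2s)$ to itself; one must feed in the iterated hypothesis to control how far apart the two contact points $r_1$ and $r_2$ can be. A minor but essential side point is to verify that all the constants produced ($n_0$, $\lambda$ and hence $K$) depend on $K_0$ only, so that the $\Delta_2$-estimate for $\phj^*$ is uniform in $x\in\Omega$.
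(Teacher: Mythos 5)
Your proof is correct, but it takes a genuinely different --- and considerably heavier --- route than the paper. The paper's proof is a one-step direct manipulation of the defining supremum: from the hypothesis $\phj(x,\rho)\le K_0^{-1}\phj(x,2\rho)$ one gets
$\phj^*(x,r)\ge\sup_{\rho>0}\{r\rho-K_0^{-1}\phj(x,2\rho)\}=K_0^{-1}\phj^*\bigl(x,(K_0/2)r\bigr)$,
i.e.\ $\phj^*(x,(K_0/2)r)\le K_0\,\phj^*(x,r)$; since $K_0/2>1$ one iterates a fixed number $n_0$ of times and uses the monotonicity of $\lambda\mapsto\phj^*(x,\lambda r)$ (Proposition~\ref{P:nondec}) to absorb the factor $2$, obtaining $\phj^*(x,2r)\le K_0^{n_0}\phj^*(x,r)$. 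Your argument instead goes through the contact points $r_1,r_2$ of the Fenchel--Young inequality, requires establishing that these maximizers exist and are strictly positive (hence needs the superlinearity of $\phj(x,\cdot)$, which you derive by iterating the hypothesis, and the standing normalization $\phj(x,0)=0$), and hinges on the quantitative bound $r_2\le\lambda(K_0)\,r_1$. All of those steps check out, and the constant you produce does depend only on $K_0$, as you stress. What your approach buys is an extra geometric piece of information --- a uniform bound on how far the argmax of the conjugate can move when the argument is doubled --- which is occasionally useful but is unnecessary here; what the paper's approach buys is a proof that fits on three lines and avoids any existence-of-maximizer argument, since it operates entirely at the level of the supremum without ever touching the subdifferential.
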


\begin{proof}
By assumption, it follows that
\begin{align*}
 \phj^*(x,r) &= \sup_{\rho \geq 0} \left\{ r\rho - \phj(x,\rho) \right\}\\
&\geq \sup_{\rho > 0} \left\{ r\rho - K_0^{-1} \phj(x,2\rho) \right\}\\
&= K_0^{-1} \sup_{\rho > 0} \left\{(K_0/2)r \cdot 2\rho - \phj(x,2\rho)\right\}
= K_0^{-1} \phj^*(x,(K_0/2)r)
\end{align*}
for $r \geq 0$ and a.e.~$x \in \Omega$. Since $K_0/2 > 1$, one can take $n_0 \in \N$ such that $(K_0/2)^{n_0-1} < 2 \leq (K_0/2)^{n_0}$, and therefore, it follows from Proposition \ref{P:nondec} that
$$
\phj^*(x,2r) \leq \phj^*(x,(K_0/2)^{n_0} r) \leq K_0^{n_0} \phj^*(x,r),
$$
which is the $\Delta_2$-condition for $\phj^*$ with $K = K_0^{n_0} > 2$.
\end{proof}

In the next lemma, $\varphi(x,\cdot)$ turns out to be a generalized $\Phi$-function (and moreover, $N$-function). Thus we can define the Musielak-Orlicz spaces associated with $\phj$ and $\phj^*$.

\begin{lem}\label{L:Nfun}
Let {\rm (a)} and {\rm {(c)}} of {\rm Assumption~\ref{hp:alpha}} hold. Then \eqref{Nfun} holds. Hence, $\phj$ meets the requirements for being a generalized $N$-function under {\rm (i)} of \/{\rm (b)} in {\rm Assumption~\ref{hp:alpha}}, and in particular, $\phj$ is a generalized $\Phi$-function. Furthermore, \eqref{Nfun} also holds with $\phj$ replaced by $\phj^*$. 
\end{lem}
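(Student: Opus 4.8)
The plan is to establish the two limits in \eqref{Nfun} by exploiting the doubly-$\Delta_2$ structure encoded in Lemma~\ref{lemma:1x}, which gives the key quantitative comparison $\phj(x,2r)\ge K_0\phj(x,r)$ with $K_0>2$. First I would observe that since $\phj(x,\cdot)$ is positive (by remark (iii) after Assumption~\ref{hp:alpha}), there is some $r_0>0$ with $\phj(x,r_0)>0$; then iterating the lower bound of \eqref{jlow} along the dyadic sequence $r_0,2r_0,4r_0,\dots$ yields $\phj(x,2^n r_0)\ge K_0^n\,\phj(x,r_0)\to+\infty$, and by the monotonicity of $\lam\mapsto\phj(x,\lam r_0)$ from Proposition~\ref{P:nondec} this forces $\lim_{r\to+\infty}\phj(x,r)=+\infty$.

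For the behaviour at the origin, the natural move is to run the same iteration \emph{backwards}: from $\phj(x,2r)\ge K_0\phj(x,r)$ we get $\phj(x,r/2^n)\le K_0^{-n}\phj(x,r)$, so along a dyadic sequence $\phj(x,2^{-n}r)\to 0$, and left-continuity of $\phj(x,\cdot)$ at each point together with monotonicity upgrades this to the full limit $\lim_{r\to0_+}\phj(x,r)=0$. Actually the refined statement $\lim_{r\to0_+}\phj(x,r)/r=0$ in \eqref{Nfun} requires a little more: I would use the convexity estimate $\phj(x,r)\le \tfrac{r}{2r_0}\phj(x,2r_0)$ for $0\le r\le 2r_0$ — no wait, that only gives $O(r)$. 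The cleaner route is to apply the \emph{same} dyadic doubling to the ratio: since $\phj(x,2r)/(2r)\ge (K_0/2)\,\phj(x,r)/r$, the quantity $\phj(x,r)/r$ satisfies $\phj(x,2^{-n}r)/(2^{-n}r)\le (2/K_0)^n\,\phj(x,r)/r\to 0$ (using $K_0/2>1$), and monotonicity of $\lam\mapsto\phj(x,\lam r)/\lam$ — which holds because $\lam\mapsto\phj(x,\lam r)$ is convex and vanishes at $0$, hence its difference quotient from $0$ is nondecreasing — bridges the dyadic sequence to the continuous limit. Symmetrically, $\phj(x,2^n r)/(2^n r)\ge (K_0/2)^n\phj(x,r)/r\to+\infty$ gives $\lim_{r\to+\infty}\phj(x,r)/r=+\infty$.

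Once \eqref{Nfun} is in hand, the remaining assertions are essentially bookkeeping: combining \eqref{Nfun} with positivity, continuity in $r$ (remark (ii) after Assumption~\ref{hp:alpha}, or the lower semicontinuity already assumed), evenness and $\phj(x,0)=0$ shows $\phj(x,\cdot)$ is an $N$-function for a.e.\ $x$; adding the measurability in (i) of (b) makes $\phj$ a generalized $N$-function, hence a fortiori a generalized $\Phi$-function in the sense of Definition~\ref{D:Phi-f}. Finally, the statement for $\phj^*$ follows by invoking the second inequality of \eqref{jlow}, $\phj^*(x,2s)\ge K_0\phj^*(x,s)$, and repeating verbatim the dyadic arguments above; alternatively one can cite Lemma~\ref{L:phj*}, which already records that $\phj^*$ inherits all the structural hypotheses on $\phj$, so the part of the lemma just proved applies to $\phj^*$ as well.

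The only genuinely delicate point is the passage from the dyadic limits to the full one-sided limits, i.e.\ making sure the monotonicity of $\lam\mapsto\phj(x,\lam r)$ (resp.\ of the difference quotient $\lam\mapsto\phj(x,\lam r)/\lam$) is correctly invoked so that controlling $\phj$ on the sparse set $\{2^{\pm n}r\}$ controls it everywhere near $0$ and near $+\infty$; both monotonicity facts are immediate from convexity and Proposition~\ref{P:nondec}, so no real obstacle remains, but it is worth stating them explicitly rather than leaving them implicit.
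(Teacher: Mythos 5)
Your proof is correct and follows essentially the same approach as the paper's: iterate the reverse-$\Delta_2$ inequality $\phj(x,2r)\ge K_0\phj(x,r)$ with $K_0>2$ from Lemma~\ref{lemma:1x} along dyadic scales and then fill in between dyadic points by a monotonicity argument. The only cosmetic difference is that the paper brackets $r$ between $2^{n-1}$ and $2^n$ and uses monotonicity of $r\mapsto\phj(x,r)$ together with the fixed base point $r_0=1$, whereas you invoke monotonicity of the difference quotient $\lam\mapsto\phj(x,\lam r)/\lam$; both are immediate from convexity and lead to the same conclusion.
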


\begin{proof}
For $r > 1$, one can take $n \in \N$ such that $2^{n-1} < r \leq 2^n$. Hence it follows that
$$
\frac{\varphi(x,r)}{r} \geq \frac{\varphi(x,2^{n-1})}{2^n} \stackrel{\eqref{jlow}}\geq \frac{K_0^{n-1} \varphi(x,1)}{2^n} \to +\infty
$$
as $n \to +\infty$ (equivalently, $r \to +\infty$). Here we used the fact that $\varphi(x,1) > 0$ and $K_0 > 2$. Moreover, for $r \in (0,1)$, let $n \in \N$ be such that $2^{-(n+1)} < r \leq 2^{-n}$. We then also derive that
$$
\frac{\varphi(x,r)}{r} \leq \frac{\varphi(x,2^{-n})}{2^{-(n+1)}} \stackrel{\eqref{jlow}} \leq \frac{K_0^{-n} \varphi(x,1)}{2^{-(n+1)}} \to 0
$$
as $n \to +\infty$ (equivalently, $r \to 0_+$). Combining these facts along with (a) and (i) of (b) in Assumption~\ref{hp:alpha}, $\varphi$ turns out to be a generalized $N$-function (hence, it is in particular a generalized $\Phi$-function). Finally, \eqref{Nfun} for $\varphi^*(x,r)$ can be proved in the same manner.
\end{proof}

\begin{rem}[Continuity of $\alpha(x,\cdot)$]\label{R:al-conti}
Let us give a proof for the continuity of $r \mapsto \alpha(x,r)$ in $\R$ for a.e.~$x \in \Omega$ under the present setting (see a remark just below Assumption \ref{hp:alpha}). Let $r_n \to r$ in $\R$. Then the Fenchel-Moreau identity implies
$$
\phj(x,r_n) + \phj^*(x,\alpha(x,r_n)) = r_n \alpha(x,r_n).
$$
Since $\phj^*$ is  a generalized  $N$-function (i.e., $\phj^*$ is coercive) by Lemma \ref{L:Nfun} and $\phj$ is nonnegative, $(\alpha(x,r_n))$ turns out to be bounded in $\R$ for each $x \in \Omega$. Hence one can extract a (not relabeled) subsequence of $(n)$ such that $\alpha(x,r_n) \to a$ for some $a \in \R$. Due to the (demi)closedness of $\alpha(x,\cdot)$ in $\R \times \R$, we obtain $a = \alpha(x,r)$. Furthermore, by virtue of the uniqueness of the limit, we conclude that $\alpha(x,r_n) \to \alpha(x,r)$ without taking any subsequence. Thus $\alpha(x,\cdot)$ turns out to be continuous, that is, $\phj(x,\cdot)$ is of class $C^1$ in $\R$.
\end{rem}

We shall next check that, under the frame of Assumption \ref{hp:alpha}, the dual space $L^\phj(\Omega)^*$ is isomorphic to the Musielak-Orlicz space $L^{\phj^*}(\Omega)$.
\begin{lem}\label{L:MO-chk}
In addition to {\rm (a)} and {\rm (b)} of {\rm Assumption \ref{hp:alpha}}, assume that $\phj$ satisfies the $\Delta_2$-condition, which is a part of {\rm {(c)}} in {\rm Assumption \ref{hp:alpha}}. Then $\phj$ is proper and locally integrable in $\Omega$. Moreover, $E^\phj(\Omega)$ coincides with $L^\phj(\Omega)$, and hence, $L^\phj(\Omega)^*$ is isomorphic to $L^{\phj^*}(\Omega)$. Furthermore, $L^\phj(Q)^*$ is isomorphic to $L^{\phj^*}(Q)$. 
\end{lem}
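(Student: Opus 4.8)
The plan is to verify, one by one, the three hypotheses needed to invoke parts (iv) and (v) of Proposition \ref{P:proper}: that $\phj$ is proper in $\Omega$, that $\phj$ is locally integrable in $\Omega$, and that $E^\phj(\Omega) = L^\phj(\Omega)$. Once these are in hand, the isomorphism $L^\phj(\Omega)^* \simeq L^{\phj^*}(\Omega)$ is exactly the conclusion of (v), and the space–time statement follows from (viii) of the same proposition together with the observation that the $\Delta_2$-condition is inherited by $\phj$ viewed as a function on $Q = \Omega \times (0,T)$ (it is $x$-independent in the $t$ direction, so the estimate \eqref{K2} holds verbatim a.e.\ in $Q$), which lets us repeat the first three steps on $Q$.

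First I would establish \emph{local integrability}. Fix $\lam > 0$ and a measurable $\omega \subset \Omega$ with $|\omega| < +\infty$. Since $K \ge 2$ in \eqref{K2}, iterating the $\Delta_2$-condition gives $\phj(x, \lam) \le \phj(x, 2^{n_0}) \le K^{n_0} \phj(x,1)$ whenever $2^{n_0-1} < \lam \le 2^{n_0}$ (using Proposition \ref{P:nondec} for the monotonicity), and hence $\modu(\lam \chi_\omega) = \int_\omega \phj(x,\lam)\,\dix \le K^{n_0} \int_\omega \phj(x,1)\,\dix \le K^{n_0} \|\phj(\cdot,1)\|_{L^1(\Omega)} < +\infty$ by (b)(ii). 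This is precisely local integrability of $\phj$. Running the same argument with $\phj^*$ in place of $\phj$ — legitimate since $\phj^*$ also satisfies a $\Delta_2$-condition by (c) and $\phj^*(\cdot,1) \in L^1(\Omega)$ by (b)(ii) — shows $\phj^*$ is locally integrable as well.

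Next, \emph{properness}. By Proposition \ref{P:proper} it suffices to check condition (iii) there, namely $S(\Omega) \subset L^\phj(\Omega) \cap L^{\phj^*}(\Omega)$. But for a simple function $s = \sum_{k} c_k \chi_{\omega_k}$ with each $|\omega_k| < +\infty$, convexity and the bound just obtained give $\modu(\mu s) < +\infty$ for $\mu$ small (indeed for all $\mu$, by the estimate above applied with $\lam = \mu |c_k|$), so $s \in E^\phj(\Omega) \subset L^\phj(\Omega)$; symmetrically $s \in L^{\phj^*}(\Omega)$ using local integrability of $\phj^*$. Hence $\phj$ is proper. Finally, for $E^\phj(\Omega) = L^\phj(\Omega)$: given $v \in L^\phj(\Omega)$ there is $\lam_0 > 0$ with $\modu(\lam_0 v) < +\infty$; for an arbitrary $\lam > 0$ choose $n \in \N$ with $2^n \lam_0 \ge \lam$, and then by the $\Delta_2$-condition and monotonicity $\modu(\lam v) \le \modu(2^n \lam_0 v) = \int_\Omega \phj(x, 2^n \lam_0 |v(x)|)\,\dix \le K^n \int_\Omega \phj(x, \lam_0 |v(x)|)\,\dix = K^n \modu(\lam_0 v) < +\infty$, so $v \in E^\phj(\Omega)$. (This is the standard fact that $\Delta_2$ forces $E^\phj = L^\phj$; one may alternatively cite \cite[Theorem 3.4.1 or Lemma 2.4.?]{DHHR}.)

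With properness, local integrability of $\phj$, and $E^\phj(\Omega) = L^\phj(\Omega)$ all verified, part (v) of Proposition \ref{P:proper} yields $L^\phj(\Omega)^* \simeq L^{\phj^*}(\Omega)$. For the last assertion, part (viii) of Proposition \ref{P:proper} (applicable because $|\Omega| < +\infty$ and $T < +\infty$) gives that $\phj$ is proper and locally integrable on $Q$; the $\Delta_2$-condition for $\phj$ on $Q$ is immediate from \eqref{K2} since $\phj$ does not depend on $t$; and then the argument of the previous paragraph, carried out on $Q$ in place of $\Omega$, shows $E^\phj(Q) = L^\phj(Q)$, so a second application of Proposition \ref{P:proper}(v) gives $L^\phj(Q)^* \simeq L^{\phj^*}(Q)$. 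I do not expect a genuine obstacle here — every step is a short deduction from \eqref{K2}, (b)(ii), Proposition \ref{P:nondec} and Proposition \ref{P:proper}; the only point requiring a little care is making sure the $\Delta_2$-constant $K$ and the $L^1$-bounds on $\phj(\cdot,1)$, $\phj^*(\cdot,1)$ survive the passage from $\Omega$ to $Q$, which they do because of finiteness of $|\Omega|$ and $T$ as recorded in Proposition \ref{P:proper}(viii).
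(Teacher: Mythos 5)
Your overall strategy --- verify that $\phj$ is proper, locally integrable, and that $E^\phj(\Omega)=L^\phj(\Omega)$, invoke Proposition~\ref{P:proper}(v), and then pass to $Q$ via Proposition~\ref{P:proper}(viii) --- is the same as the paper's, and your treatments of local integrability of $\phj$ (iterated $\Delta_2$ plus $\phj(\cdot,1)\in L^1(\Omega)$), of $E^\phj(\Omega)=L^\phj(\Omega)$, and of the passage to $Q$ are all correct and in line with the paper.

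There is, however, one slip: to establish local integrability of $\phj^*$ you invoke the $\Delta_2$-condition for $\phj^*$, ``by (c)''. But the hypotheses of this lemma are only (a), (b), and the $\Delta_2$-condition for $\phj$ \emph{alone}; the $\nabla_2$-condition (i.e.\ $\Delta_2$ for $\phj^*$) is deliberately excluded here. So, as written, that auxiliary claim is not available. Fortunately it is also not needed: the only place you use it is to show $S(\Omega)\subset L^{\phj^*}(\Omega)$ in the properness step, and that inclusion follows already from (b)(ii) and Proposition~\ref{P:nondec} without any $\Delta_2$-iteration --- given a simple function $s$, pick $\mu>0$ with $\mu\|s\|_{L^\infty(\Omega)}\le 1$; then $\phj^*(x,\mu|s(x)|)\le\phj^*(x,1)\in L^1(\Omega)$, so $\modud(\mu s)<+\infty$ and $s\in L^{\phj^*}(\Omega)$. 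This is in fact exactly what the paper's proof does (and also what you yourself parenthetically observe for $\phj$, namely that smallness of $\mu$ already suffices). Replacing the appeal to local integrability of $\phj^*$ by this small-$\mu$ argument closes the gap and brings your proof fully in line with the lemma's stated hypotheses.
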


\begin{proof}
For any $e \in S(\Omega)$ we see that $x \mapsto \phj(x,e(x))$ is measurable in $\Omega$, since $\phj$ is Carath\'eodory due to (a) of Assumption \ref{hp:alpha}. Since $\phj(\cdot,1)$ is assumed to be integrable in $\Omega$ (see (ii) of (b) in Assumption \ref{hp:alpha}), by Proposition \ref{P:nondec} we find that $\phj(\cdot,e) \in L^1(\Omega)$. Similarly, it also follows that $\phj^*(\cdot,e) \in L^1(\Omega)$. Hence $e \in L^\phj(\Omega) \cap L^{\phj^*}(\Omega)$. Recalling $L^{\phj^*}(\Omega) \subset L^\phj(\Omega)'$, we deduce that $e \in L^{\phj}(\Omega)'$, and thus, $\phj$ turns out to be proper.

Moreover, if $\phj$ satisfies the $\Delta_2$-condition, then $\phj$ turns out to be locally integrable in $\Omega$. Indeed, let $u \in L^\phj(\Omega)$ and $\lam > 1$. Then one can take $n \in \N$ such that $2^{n-1} < \lam \leq 2^n$. It follows from Proposition \ref{P:nondec} and the $\Delta_2$-condition of $\phj$ (see \eqref{K2}) that
$$
\phj(x,\lam u(x)) \leq \phj(x,2^n u(x)) \leq K^{n} \phj(x,u(x)),
$$
which implies
$$
\modu(\lam u) \leq K^n \modu(u) < +\infty.
$$
On the other hand, one can check $\modu(\lam u) \leq \lam \modu(u) < +\infty$ for any $\lam \in (0,1)$ by convexity. In particular, since $\phj$ is proper, we have $\modu(\lam \chi_\omega) < +\infty$ for any $\lam > 0$ and measurable $\omega \subset \Omega$ (of finite measure). Moreover, the $\Delta_2$-condition of $\phj$ also ensures $E^\phj(\Omega)=L^\phj(\Omega)$ (see~\cite[p.~49]{DHHR}). Consequently, under (a) and (b) of Assumptions \ref{hp:alpha} along with the $\Delta_2$-condition of $\phj$, $L^\phj(\Omega)^*$ is isomorphic to $L^{\phj^*}(\Omega)$. 

Finally, we note that the argument above can also be applied to check these properties in $Q=\Omega\times(0,T)$. Therefore, $L^\phj(Q)^*$ turns out to be isomorphic to $L^{\phj^*}(Q)$.
\end{proof}

\begin{rem}\label{R:MO-chk}
Since $\Omega$ has a finite measure, any proper $\Phi$-function $\phj$ satisfies (b) of Assumption \ref{hp:alpha}. Indeed, if $\phj$ is proper in $\Omega$, then it can be seen that $\phj(\cdot,1) = \phj(\cdot,\chi_\Omega) \in L^1(\Omega)$. Furthermore, thanks to Proposition \ref{P:proper}, $\phj^*$ is also proper, and hence, $\phj^*(\cdot,1)$ is integrable in $\Omega$.
\end{rem}

\subsection{Base spaces and subdifferentials}\label{Ss:base_sp}

We are now ready to introduce a functional analytic framework for handling equation \eqref{eqn}. We first set 
$$
V = \Lpsot
$$ 
as a base space. Furthermore, denote by $V^*$ the dual space of $V$, which can be identified with $L^{\phj^*}(\Omega)$ (see Proposition \ref{P:proper}); hence, in what follows, we shall use the same notation for $f \in V^*$ and its representation in $L^{\phj^*}(\Omega)$. The duality pairing between $\Vp$ and $V$ will be simply denoted by $\duav{\cdot,\cdot}$. We further recall that $V$ and $V^*$ are reflexive under Assumption \ref{hp:alpha} (see Proposition \ref{P:reflexive}). 

We shall need an analogous space of (time-dependent) functions defined on $Q:=\Omega\times(0,T)$. Set
\begin{equation*}
  \calV = \Lpsq:= 
   \big\{ v\in L^0(Q) \colon \Modu(\lam v) < +\infty \ \mbox{ for some } \lam > 0 \big\},
\end{equation*}
where $\Modu: L^0(Q) \to [0,+\infty]$ on $L^0(Q)$ is the modular defined by
\begin{equation*}
 \Modu(v):= \iTo \phj(x,|v(x,t)|)\,\dix\dit \quad \mbox{ for } \ v \in L^0(Q),
\end{equation*}
and which is furnished with norm $\| \cdot \|_{\calV} := \| \cdot \|_{L^\phj(Q)}$. 
Then $\calV$ will also play the role of a base space. Here for 
any $v \in \Lpsq$, by Fubini-Tonelli's lemma, we remark that $v(t):=v(\cdot,t) \in \Lpso$ for a.e.~$t \in (0,T)$. 
We shall denote by $\calV^*$ the dual space of $\calV$, which can be identified with $L^{\phj^*}(Q)$; hence, in what follows, we shall use the same notation for $f \in \calV^*$ and its representation in $L^{\phj^*}(Q)$. Moreover, $\duaV{\cdot,\cdot}$ stands for the duality pairing between $\calVp$ and $\calV$. Under the $\Delta_2$-condition of $\phj$ and $\phj^*$, we also have $\calV = E^\phj(Q)$ and $\calV^* = E^{\phj^*}(Q)$, and therefore, $\calV$ and $\calV^*$ are reflexive and separable (see Propositions \ref{P:proper} and \ref{P:reflexive} and Lemmas \ref{L:phj*} and \ref{L:MO-chk}).

In the sequel, we shall handle various types of monotone operators constructed as subdifferentials. To this end, we shall introduce some general notation. Let $S : V\to (-\infty,+\infty]$ be a convex lower semicontinuous functional such that $S \not\equiv +\infty$. Then we denote by $\de_\Omega S$ the subdifferential of the functional $S$ in the duality between $V$ and $\Vp$. Namely,
\begin{equation*}
  \xi \in \deo S(u) \quad \stackrel{\text{define}}\Longleftrightarrow \quad
   S(v) \ge S(u) + \duav{\xi, v - u } \ \mbox{ for all } v\in V.
\end{equation*}
Then, $\partial_\Omega S$ is a maximal monotone operator from $V$ to $2^{\Vp}$.
Analogously, under similar assumptions on a functional $\calS:\calV\to (-\infty,+\infty]$, we can set
\begin{equation*}
  \xi \in \deq \calS(u) \quad \stackrel{\text{define}}\Longleftrightarrow \quad
   \calS(v) \ge \calS(u) + \duaV{\xi, v - u } \ \mbox{ for all } v\in \calV.
\end{equation*}
Clearly, in the above relation, $\xi\in \calVp$ depends both on 
space and time variables. 

\subsection{Assumptions on $B$}

Let us move on to specifying the second nonlinear
operator involved in \eqref{eqn}.
\begin{assu}[Hypotheses on $B$ and $\fhi$]\label{hp:B}\begin{rm}
The operator $B : V \to 2^{V^*}$ is in a subdifferential form,
$$
B = \partial_\Omega \fhi
$$
for some convex functional $\fhi : V \to [0,+\infty]$. The {\sl effective domain} of $\fhi$ is defined as the set
\begin{equation*}
  D(\fhi):=\big\{ u\in V \colon \fhi(u)<+\infty \big\}.
\end{equation*}
Assume the following:
\begin{itemize}
 \item[(a)] $\fhi$ is convex, lower semicontinuous and not identically $+\infty$ (i.e., $D(\fhi) \neq \emptyset$). 
 \item[(b)] There exists a Banach space $X$ \emph{compactly} embedded into $V$ such that $D(\fhi) \subset X$. Moreover, sublevel sets of the sum $(\fhi+\modu)$ are bounded in $X$; namely, there exists a non-decreasing function $\fQ:[0,+\infty) \to [0,+\infty)$ such that, for all $c\ge 0$, the following holds:
\begin{equation*}
\| u \|_X \le \fQ(c) \ \mbox{ if } \ u\in V \ \mbox{ and } \ \fhi(u)+\modu(u)\le c.
\end{equation*}
\end{itemize}
\eddas
As before, we also employ notation related to 
time-dependent functions. We start with setting $\Fhi:L^1(0,T;V) \to [0,+\infty]$ by
\begin{equation*}
\Fhi(u):= \begin{cases}
	   \int_0^T \fhi(u(t)) \,\dit &\mbox{ if } \ t \mapsto \fhi(u(t)) \in L^1(0,T),\\
	   +\infty &\mbox{ otherwise}
	  \end{cases}
\end{equation*}
for $u \in L^1(0,T;V)$.
Then the following lemma holds:
\bele\label{lemma:Phisc}
 The functional \,$\Fhi$ is convex and lower semicontinuous in $L^1(0,T;V)$ and $D(\Fhi) \neq \emptyset$ {\rm (}i.e., $\Fhi$ is not identically $+\infty${\rm )}.
\enle
\begin{proof}
Convexity follows immediately from the definition. Moreover,  it is obvious that $D(\Fhi)$ is not empty. To show the lower semicontinuity, let $(u_n)$ and $u$ be such that $u_n \to u$ in $L^1(0,T,V)$ and $\liminf_{n \to +\infty} \Fhi(u_n) < +\infty$ (otherwise, nothing remains to prove). Then it follows from  Fatou's lemma  that
\begin{align*}
 \int^T_0 \left( \liminf_{n\to+\infty} \fhi(u_n(t)) \right)\, \d t \leq \liminf_{n \to +\infty} \Fhi(u_n(t)) \, \d t < +\infty,
\end{align*}
which in particular implies that $\liminf_{n\to+\infty} \fhi(u_n(t))$ is integrable over $(0,t)$ and finite for a.e.~$t \in (0,T)$. We may assume, up to a (not relabeled) subsequence, that $u_n(t)$ tends to $u(t)$ strongly in $V$ for almost every~$t\in (0,T)$. By virtue of the lower semicontinuity of $\fhi$ in $V$,
we then observe that
\begin{equation}\label{sc12}
  \fhi(u(t)) \le \liminf_{n\to+\infty} \fhi(u_n(t)) < +\infty
   \quext{for a.e.~}\/t\in(0,T).
\end{equation}
In particular, we find that $u(t) \in D(\fhi)$ for a.e.~$t \in (0,T)$. Moreover, we can also check the measurability of the function $t \mapsto \fhi(u(t))$ in $(0,T)$ by employing the (standard version of) Moreau-Yosida regularization for $\fhi$ and its fine properties (see, e.g.,~\cite{barbu,BCP}). Thus integrating \eqref{sc12} over $(0,T)$ and 
applying  once more  Fatou's lemma, we obtain $\fhi(u(\cdot)) \in L^1(0,T)$ and
$$
\int^T_0 \fhi(u(t)) \, \d t \leq \int^T_0 \left( \liminf_{n \to +\infty} \fhi(u_n(t)) \right) \, \d t \leq \liminf_{n \to +\infty} \int^T_0 \fhi(u_n(t)) \, \d t < +\infty.
$$
Consequently, $\Fhi$ is lower semicontinuous on $L^1(0,T;V)$.
\end{proof}
In the sequel, we shall often need to work with the 
restriction of $\Fhi$ onto $\calV$ (also denoted
by the same symbol $\Fhi$ for simplicity). 
It is then clear that $\Fhi$ is also convex,
lower semicontinuous (in $\calV$ by Lemma \ref{lem:emb1} below) and has a non-empty effective domain.
In what follows,  we set 
\begin{equation*}
A:=\deo \modu \quad \mbox{ and } \quad B:=\deo \fhi. 
\end{equation*}
From the general theory, it then follows
that $A$ and $B$ are maximal monotone, possibly multi-valued,
operators from $V$ to $2^{\Vp}$. As we shall see in the  sequel  (see Lemma \ref{lem:sing} below), $A$ will turn out to be an abstract realization of the function $\alpha$, and hence, $A$ is single-valued. In addition, we also define time-dependent analogues of the operators by putting 
$$
\calA:=\deq \Modu \quad \mbox{ and } \quad \calB:=\deq \Fhi.
$$
Analogously as before, $\calA$ and $\calB$ 
are maximal monotone operators from $\calV$ to $2^{\calVp}$. 
In the next section, we shall also rigorously prove that $\calA$ and 
$\calB$, as expected, represent time-dependent counterparts of $A$
and $B$, respectively. 

\subsection{Main result}

We are now ready to state our main result, which is concerned with existence of strong solutions to the initial-value problem for equation~\eqref{eqn}.
\bete\label{teo:main}
 Let~{\rm Assumptions~\ref{hp:alpha}} and {\rm~\ref{hp:B}} hold and also suppose that 
 \begin{align}\label{hp:u0}
   u_0 &\in D(\fhi),\\
  \label{hp:f}
   f &\in \calVp. 
 \end{align}
 Then, there exists at least one function
 $u: [0,T] \times \Omega\to \RR$ satisfying
 \begin{gather}
  u,~u_t \in \calV, \quad u \in C_w([0,T];X),\label{rego:u}\\
  t \mapsto \fhi(u(t)) \ \mbox{ is absolutely continuous on } [0,T]\no
 \end{gather}
 and solving the equation
 \begin{equation}\label{eq}
   A(u_t(\cdot,t)) + B(u(\cdot,t)) \ni f(\cdot,t)
    \ \mbox{ in }\,\Vp \ \mbox{ for a.e. } t\in (0,T)
 \end{equation}
 together with the initial condition
 \begin{equation*}
    u|_{t=0}= u_0
     \ \mbox{ in }\,V.
 \end{equation*}
 Moreover, for all $s,t\in[0,T]$, the following
 \emph{energy identity} holds\/{\rm :}
 \begin{equation}\label{energy}
   \fhi(u(\cdot,t)) - \fhi(u(\cdot,s)) + \int_s^t 
     \duav{\alpha(\cdot,u_t(\cdot,\tau)),u_t(\cdot,\tau)} \, \d \tau 
     = \int_s^t \duav{ f(\cdot,\tau),u_t(\cdot,\tau)} \, \d \tau.
 \end{equation}
\ente
 \beos 
\begin{enumerate}
 \item[(i)] It is worth observing from the very beginning that, in view of the results of the next section, inclusion~\eqref{eq} can be rewritten in several equivalent forms. Making explicit the section of $B(u(t))$, one has the equation,
\begin{equation}\label{eq:loc}
  A (u_t(t)) + \eta(t)  =  f(t)
   \quext{in }\,\Vp~~\quext{for a.e.~}\,t\in(0,T),
\end{equation}
where $\eta\in \calVp$ is such that $\eta(t)\in B(u(t))$ for 
a.e.~$t\in(0,T)$;  note in particular that the regularity $\eta\in \calVp$, though
not explicitly stated in Theorem~\ref{teo:main}, is a direct consequence of
\eqref{hp:f} and of the properties of $A$.  On the other hand, one may also write a 
``global'' formulation of the form
\begin{equation}\label{eq:glob}
  \calA(u_t) + \eta  =  f
   \quext{in }\,\calVp,
\end{equation}
with $\eta\in \calVp$ satisfying $\eta\in \calB(u)$.
In the next section, we shall see that formulations \eqref{eq:loc}
and \eqref{eq:glob} are in fact equivalent and we shall use either of them upon convenience.
 \item[(ii)] In particular, \eqref{eq:glob} implies
$$
\calA(u_t), \ \eta \in \calVp,
$$
which is the maximal regularity under \eqref{hp:f}.
\end{enumerate}
\eddos 
The proof of Theorem~\ref{teo:main} will essentially be detailed in Section~\ref{sec:proof} below. We need, 
however, to prepare a considerable amount of 
preliminary material and tools. This is the purpose of 
the following sections.

\section{Some lemmas on the Musielak-Orlicz modular}
\label{sec:tools}

In this section, we shall develop some lemmas relevant to the modular $\modu$ under (a part of) Assumption \ref{hp:alpha}. They will be used later to prove the main result and might also be of independent interest. Let us start with the following lemma, which also derives $\vep$-Young's inequality for the modulars associated with $\phj$ and $\phj^*$. It is straightforward from \eqref{phi-young} for homogeneous $\Phi$-functions (e.g., standard Lebesgue spaces); however, it is not the case for general (inhomogeneous) ones.

\begin{lem}[$\vep$-Young's inequality]\label{L:young}
In addition to {\rm (a)} of {\rm Assumption \ref{hp:alpha}}, suppose that $\phj^*$ satisfies the $\Delta_2$-condition. 
Then, for $\vep \in (0,1)$, there exists a constant $C_\vep > 0$ such that
$$
ab  \leq \vep \varphi(x,a) + C_\vep \varphi^*(x,b) \quad \mbox{ for any } \ a,b \in \R
  \ \mbox{ and a.e.~}\, x \in \Omega. 
$$
\end{lem}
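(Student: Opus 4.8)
The plan is to derive the $\vep$-Young inequality from the basic Young inequality \eqref{phi-young} combined with the $\Delta_2$-condition on $\phj^*$. The point is that scaling one argument by a factor shifts "mass" between the $\phj$ and $\phj^*$ terms, and the $\Delta_2$-condition for $\phj^*$ controls the cost of such scaling on the conjugate side. First I would fix $\vep\in(0,1)$ and pick $n\in\NN$ large enough that $2^{-n}\le \vep$; this $n$ depends only on $\vep$. Then for $a,b\in\RR$, using that $\phj$ and $\phj^*$ are even I may assume $a,b\ge 0$, and I write
$$
ab = (2^{-n}a)(2^n b) \le \phj(x,2^{-n}a) + \phj^*(x,2^n b),
$$
applying \eqref{phi-young} to the pair $(2^{-n}a, 2^n b)$.

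Next I would estimate the two terms separately. For the first term, Proposition \ref{P:nondec} gives $\phj(x,2^{-n}a)\le 2^{-n}\phj(x,a)\le \vep\,\phj(x,a)$; in fact convexity plus $\phj(x,0)=0$ already yields $\phj(x,\lambda r)\le\lambda\phj(x,r)$ for $\lambda\in[0,1]$, which is exactly the case $\lambda=2^{-n}\le\vep$ here. For the second term, I iterate the $\Delta_2$-condition \eqref{K2} for $\phj^*$ $n$ times:
$$
\phj^*(x,2^n b)\le K\,\phj^*(x,2^{n-1}b)\le\cdots\le K^n\,\phj^*(x,b).
$$
Setting $C_\vep := K^n$ (with $n=n(\vep)$ the chosen integer), I obtain
$$
ab \le \vep\,\phj(x,a) + C_\vep\,\phj^*(x,b)
$$
for all $a,b\ge 0$ and a.e.\ $x\in\Omega$, and then for all $a,b\in\RR$ by evenness.

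There is essentially no hard obstacle here; the only mild subtlety is bookkeeping the choice of $n$ so that $C_\vep$ depends only on $\vep$ (and on the fixed structural constant $K$), not on $x$ — this is guaranteed because \eqref{K2} holds uniformly in $\Omega$. One should also note that $\phj^*$ is finite everywhere (by Lemma \ref{L:phj*}, using the full Assumption \ref{hp:alpha}, or directly from the $\Delta_2$-condition on $\phj^*$ as in the proof of Lemma \ref{L:phj*}), so the right-hand side is genuinely finite whenever $a,b$ are; and if $\phj(x,a)=+\infty$ the inequality is trivial. A final remark: integrating this pointwise inequality over $\Omega$ (resp.\ $Q$) immediately yields the announced $\vep$-Young inequality at the level of the modulars $\modu$ and $\modud$ (resp.\ $\Modu$ and $\Modud$), which is the form used in the sequel.
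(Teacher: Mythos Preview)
Your proof is correct and follows essentially the same route as the paper's: both apply Young's inequality \eqref{phi-young} to a rescaled pair and then use the $\Delta_2$-condition on $\phj^*$ to absorb the scaling on the conjugate side. The only cosmetic difference is that the paper writes $ab=\vep\cdot a\cdot(\vep^{-1}b)$ and applies \eqref{phi-young} to $(a,\vep^{-1}b)$, obtaining $\vep\,\phj(x,a)$ directly with $C_\vep=\vep K^{n_\vep}$, whereas you scale by a dyadic factor $2^{-n}\le\vep$ and recover the $\vep$ in front of $\phj(x,a)$ via convexity; the two arguments are interchangeable.
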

\begin{proof}
Exploiting Young's inequality \eqref{phi-young}, for $\vep \in (0,1)$, we see that
$$
ab = \vep a (\vep^{-1}b) \leq \vep \left[ \phj(x,a) + \phj^*(x,\vep^{-1}b) \right]
\ \mbox{ for } \ a,b \in \R.
$$
Moreover, one can take $n_\vep \in \N$ such that $2^{n_\vep-1} < \vep^{-1} \leq 2^{n_\vep}$, and it then follows from Proposition \ref{P:nondec} and the $\Delta_2$-condition of $\phj^*$ that
$$
\phj^*(x,\vep^{-1}b) \leq \phj^*(x,2^{n_\vep} b) \leq K^{n_\vep} \phj^*(x,b).
$$
Thus we obtain the assertion with $C_\vep := \vep K^{n_\vep} > 0$.
\end{proof}

According to Lemma \ref{lemma:1x}, the $\Delta_2$-property along with the non-negativity of $\phj^*$ implies a superlinear growth of $r \mapsto \phj(x,r)$, that is, the first inequality of \eqref{jlow} with $K_0 > 2$. Based on this property, we can prove an important boundedness criterion for subsets of $V$ (of course, an analogous result holds in~$\calV$).
 In particular, such a property is the key tool that we shall use in order to deduce an \emph{a-priori} estimate
as a consequence of the energy principle satisfied by families of approximate solutions to our equation. 
\begin{lem}[Boundedness criteria]\label{lemma:1b}
 Let $\phj$ satisfy\/ {\rm (a), (i)} of {\rm (b)} and {\rm {(c)}} in \/{\rm Assumption~\ref{hp:alpha}.}
\begin{enumerate}
 \item[\rm (i)] Let $L\subset \Lpso$ be a set such that
 \begin{equation}\label{bounded}
   \modu(u) \le C \left( \| u \|_{L^\varphi(\Omega)} + 1 \right)
    \quext{for all }\, u \in L
 \end{equation}
for some $C>0$ independent of $u$.
Then $L$ is bounded in $\Lpso$. 
 \item[\rm (ii)] Let $L^* \subset L^{\varphi^*}(\Omega)$ be a set such that 
\begin{equation*}
\modud(u) \leq C \left( \|u\|_{L^{\varphi^*}(\Omega)} + 1\right) \ \mbox{ for all } \, u \in L^*
\end{equation*}
for some $C > 0$ independent of $u$. Then $L^*$ is bounded in $L^{\varphi^*}(\Omega)$.
\end{enumerate} 
\end{lem}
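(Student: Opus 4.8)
The plan is to prove statement (i) directly from the superlinear growth encoded in \eqref{jlow}, and then obtain (ii) as an immediate corollary by applying (i) to the conjugate function $\phj^*$ (which, by Lemma~\ref{L:phj*}, satisfies all the hypotheses of Assumption~\ref{hp:alpha} that are invoked here). So the real work is in part (i).

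For (i), the key point to exploit is that \eqref{jlow} forces the \emph{modular} to dominate the norm from above once the norm is large. Concretely, I would first recall the standard \emph{unit-ball} (or norm--modular) relation available under the $\Delta_2$-condition: if $\|u\|_{\LpsO}>1$ then $\modu(u)\ge\|u\|_{\LpsO}$ — more precisely, $\modu(u/\lambda)\le 1$ at $\lambda=\|u\|_{\LpsO}$ together with convexity gives $\modu(u)\ge \|u\|_{\LpsO}\,\modu(u/\|u\|_{\LpsO})$ when $\|u\|_{\LpsO}\ge 1$, hence $\modu(u)\ge\|u\|_{\LpsO}$ provided $\modu(u/\|u\|_{\LpsO})$ is at least a fixed positive constant. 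The cleanest route is actually via \eqref{jlow}: iterating $\phj(x,2r)\ge K_0\phj(x,r)$ shows $\phj(x,2^k r)\ge K_0^k\phj(x,r)$, so for $u$ with $\|u\|_{\LpsO}=:M$ large, writing $M$ between consecutive powers of $2$, one gets $\modu(u)\ge (K_0/2)^{\lfloor\log_2 M\rfloor}\modu(u/2^{\lceil\log_2 M\rceil})\ge c\,M^{\theta}$ for some $\theta>1$ (namely $\theta=\log_2 K_0>1$) and some $c>0$, using that $\modu(v)\ge$ const for $\|v\|_{\LpsO}$ bounded below away from $0$. Feeding this into the hypothesis \eqref{bounded} gives $c\,M^{\theta}\le C(M+1)$, and since $\theta>1$ this forces $M$ to be bounded by a constant depending only on $C$, $c$, $\theta$; thus $L$ is bounded in $\LpsO$.

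There is one technical nuisance to handle carefully: the estimate $\modu(u)\ge c\,M^{\theta}$ requires a lower bound on $\modu(u/\|u\|_{\LpsO})$, i.e.\ on the modular of a unit-norm element. By the definition of the Luxemburg norm, for a unit-norm $v$ one has $\modu(v/\lambda)>1$ for every $\lambda<1$; combined with left-continuity of $\lambda\mapsto\modu(\lambda v)$ at $\lambda=1$ (property (iv) of a semimodular), this yields $\modu(v)\ge 1$. So in fact $\modu(u/\|u\|_{\LpsO})\ge 1$ whenever $\|u\|_{\LpsO}>0$, which is exactly the constant we need, with no positivity subtleties. The rest is the elementary power-comparison argument above. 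I would present this as: reduce to $M:=\|u\|_{\LpsO}>1$ WLOG; pick $k\in\N$ with $2^{k-1}<M\le 2^k$; use $\phj(x,M s)\ge \phj(x,2^{k-1}s)\ge K_0^{k-1}\phj(x,s)$ pointwise with $s=|u(x)|/M$ and integrate to get $\modu(u)\ge K_0^{k-1}\modu(u/M)\ge K_0^{k-1}$; since $K_0^{k-1}\ge K_0^{-1}\,2^{(k-1)\log_2 K_0}\ge K_0^{-1}M^{\theta-1}\cdot M^{0}$... more simply $K_0^{k-1}=K_0^{-1}(K_0)^k\ge K_0^{-1}(2^{\theta})^{k}\ge K_0^{-1}M^{\theta}$ where $\theta:=\log_2 K_0>1$; conclude from \eqref{bounded} that $K_0^{-1}M^{\theta}\le C(M+1)\le 2CM$, so $M^{\theta-1}\le 2CK_0$, i.e.\ $M\le (2CK_0)^{1/(\theta-1)}$.

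The main obstacle is not conceptual but bookkeeping: making sure the powers of $2$ versus powers of $K_0$ line up correctly and that the lower bound $\modu(u/\|u\|_{\LpsO})\ge 1$ is justified rigorously (needing left-continuity at $\lambda=1$, which is part of the semimodular axioms). Once those are in place the argument is short. For part (ii), I would simply remark that by Lemma~\ref{L:phj*} the function $\phj^*$ satisfies (a), (b)(i) and (c) of Assumption~\ref{hp:alpha} (in particular $(\phj^*)^*=\phj^{**}=\phj$ satisfies the $\Delta_2$-condition, so the $\nabla_2$-part needed for the analogue of \eqref{jlow} with $\phj^*$ in place of $\phj$ holds), whence Lemma~\ref{lemma:1x} applied to $\phj^*$ gives the first inequality of \eqref{jlow} with $\phj$ replaced by $\phj^*$, and the proof of (i) applies verbatim with $\modu$, $\LpsO$ replaced by $\modud$, $\Lpsostar$.
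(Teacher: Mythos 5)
Your argument follows essentially the same route as the paper's: bracket $M:=\|u\|_{L^\phj(\Omega)}$ between consecutive powers of $2$, iterate the superlinear inequality \eqref{jlow} from Lemma~\ref{lemma:1x}, and use \eqref{bounded} to bound $M$. Part (ii) by applying (i) to $\phj^*$ also matches the paper. However, there is a genuine flaw in your ``technical nuisance'' step.

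You assert that $\modu(u/M)\ge 1$ for $M=\|u\|_{L^\phj(\Omega)}$, justifying it by ``left-continuity of $\lambda\mapsto\modu(\lambda v)$ at $\lambda=1$.'' This does not work. The information you have is $\modu(\mu v)>1$ for every $\mu>1$ (equivalently $\modu(v/\lambda)>1$ for $\lambda<1$), i.e.\ control to the \emph{right} of $\mu=1$, whereas the semimodular axiom (iv) gives left-continuity, i.e.\ $\modu(v)=\lim_{\mu\to 1^-}\modu(\mu v)$. Since $\mu\mapsto\modu(\mu v)$ is non-decreasing, left-continuity only expresses $\modu(v)$ as the supremum of the \emph{smaller} values $\modu(\mu v)$, $\mu<1$, which gives nothing. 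Indeed, for a general semimodular the claim $\|v\|=1\Rightarrow\modu(v)\ge 1$ is false: take $\phj(r)=r/2$ for $r\le1$ and $\phj(r)=+\infty$ for $r>1$ and $v=\chi_\Omega$ with $|\Omega|=1$; then $\|v\|_{L^\phj}=1$ but $\modu(v)=\tfrac12$. (This $\phj$ violates $\Delta_2$, which is precisely the hypothesis you are not actually invoking.)

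The repair is simple and is what the paper does: split at the power of $2$ that lies \emph{strictly below} the norm. That is, with $2^{k-1}<M\le 2^k$, write $\phj(x,|u(x)|)\ge K_0^{k-1}\phj(x,|u(x)|/2^{k-1})$ and integrate to get $\modu(u)\ge K_0^{k-1}\modu(u/2^{k-1})$; since $2^{k-1}<\|u\|_{L^\phj(\Omega)}$, the definition of the Luxemburg norm (as an infimum) immediately gives $\modu(u/2^{k-1})>1$, with no continuity argument at all. Alternatively, you could appeal to the $\Delta_2$-condition, under which $\lambda\mapsto\modu(\lambda u)$ is finite, convex and non-decreasing on all of $(0,\infty)$, hence actually continuous (in particular right-continuous at $\lambda=1/M$), which does legitimately yield $\modu(u/M)\ge1$; but you must say ``$\Delta_2$-condition'' rather than ``left-continuity.'' With either fix, the rest of your computation and the reduction of (ii) to (i) via Lemma~\ref{L:phj*} are correct and yield the same bound $(2CK_0)^{1/\log_2(K_0/2)}$ as in the paper.
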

\begin{proof}
One can assume $C \geq 1$ without loss of generality. We first show (i). Let $u \in L$
and set $ \lambda := \| u \|_{L^\phj(\Omega)}$. Let $n\ge 1$ be such that $\lambda\in (2^{n-1},2^n]$
(if $\lambda \le 1$, there is nothing to prove). Recalling Lemma \ref{lemma:1x}, we have
\begin{align*}
 C \left(1 + \frac 1 \lam\right) &\ge \io \frac{\phj(\cdot,|u|)}{\lambda} \, \d x\\
&\ge \io \frac{\phj\left(\cdot,2^{n-1} |u| 2^{1-n}\right)}{2^n} \, \d x
   \stackrel{\eqref{jlow}}\ge \frac{K_0^{n-1}}{2^n} \io \phj\left(\cdot,\frac{|u|}{2^{n-1}} \right) \, \d x 
   \ge \frac{K_0^{n-1}}{2^n}. 
\end{align*}
Here, the last inequality follows from the definition of the (Luxemburg-type) norm along with $2^{n-1} < \lambda = \| u \|_{L^\phj(\Omega)}$. Then, we obtain
\begin{equation*}
  \left( \frac{K_0}{2} \right)^n 
   \le 2 C K_0,
\end{equation*}
which along with the fact that $K_0 > 2$ implies
\begin{equation*}
  n \le \log_2 ( 2 C K_0 ) / \log_2 ( K_0/2 )
\end{equation*}
and, consequently, for any $u \in  L$, one has either $\| u \|_{L^\phj(\Omega)}\le 1$ or
\begin{equation}\label{low14}
\| u \|_{L^\phj(\Omega)} = \lambda \le 2^{ \log_2 ( 2 C K_0 ) / \log_2 ( K_0/2 ) } = ( 2 C K_0 )^{ 1 / \log_2 (K_0/2)}, 
\end{equation}
as desired. It is worth noting that, as expected, the closer is $K_0$ to $2$, the larger is the bound on the \rhs\ of \eqref{low14}. The assertion (ii) can also be verified in the same manner.
\end{proof}

We also remark that \eqref{bounded} can be restated as follows: under Assumption \ref{hp:alpha},
the modular functional $\modu$ is {\sl coercive}\/ with respect to the corresponding Luxemburg-type norm, namely one has
\begin{equation}\label{bounded2}
  \lim_{\| u \|_{L^\phj(\Omega)} \to +\infty}\frac{\modu(u)}{\| u \|_{L^\phj(\Omega)}} = +\infty.
\end{equation}
\beos\label{rem:bound}
The above result may fail to be true when \eqref{jlow} does not hold.
To see this, let us consider the case when  all  the elements of $L$ are 
of the form $u = k \chi_E$ for $k > 0$ and a measurable subset $E$ of $\Omega$  with $|E|>0$, and $\phj$ is independent of $x$, i.e., $\varphi(x,r)=\varphi(r)$, which will be specified later.
Then, by continuity of $\phj(\cdot)$,  we have
\begin{equation}\label{bo1}
  \lambda = \| \chi_E \|_V \ \Leftrightarrow \ 
   \iE \phj\left(\frac1\lambda\right) \, \d x= 1 \ \Leftrightarrow \ 
   \phj\left(\frac1\lambda\right) = \frac1{|E|}.
\end{equation}
 Moreover, assuming that, for all $u\in L$, $\| u \|_V$ is large enough 
(otherwise there is nothing to prove), we can get rid of the summand $1$ in brackets
in \eqref{bounded}, which can then be  rewritten as
\begin{equation}\label{bo2}
  \phj(k)|E| \le C \| k \chi_E \|_V 
   = C k \| \chi_E \|_V.
\end{equation}
Combining \eqref{bo1} with \eqref{bo2} we obtain
\begin{equation}\label{bo3}
  \frac{\phj(k)}k \le C \frac{\phj\left(\frac1\lambda\right)}{\frac1\lambda}.
\end{equation}
Put now $\ell:= \lambda^{-1}$. 
Set also, for $r\in (0,\infty)$, $m(r):= \phj(r)/ r$. A direct check
shows that $m(r)$ is not decreasing; indeed, we see that $m'(r) = (\varphi'(r)r - \varphi(r))/r^2 = \varphi^*(\varphi'(r))/r^2 \geq 0$.
Moreover, from  Lemma~\ref{L:Nfun}, \eqref{Nfun} holds, i.e., 
\begin{equation}\label{bo4}
  \lim_{r\searrow 0} m(r) = 0, \quad
  \lim_{r\nearrow +\infty} m(r) = +\infty.
\end{equation}
Hence, combining \eqref{bo3} with \eqref{bo4} we infer that
\begin{equation}\label{bo3c}
  \frac{m(k)}{m(\ell)} \le C.
\end{equation}
Actually, a bound for $u = k \chi_E$ would follow from \eqref{bo3c}
if it were possible to deduce from it that $k\lambda=k/\ell$ is bounded.
However, this is readily seen to be false, at least in general.
Indeed, it is sufficient
to take $\phj(r) = r \log^2(1+r)$ (note that $\phj$
is a generalized uniformly convex $N$-function; however, $\phj^*$
has an exponential growth at infinity and, hence, does
not satisfy the $\Delta_2$-condition), corresponding
to $m(r) = \log^2 (1 + r)$. Then, we take 
$$
  u_n = k_n \chi_{E_n}, \quad
  k_n = n \log (1 + n), \quad
   \ell_n = n,
$$
corresponding to
$$
  \lambda_n = \frac1n, \quad
   | E_n | = \frac1{n \log^2(1+n)}.
$$
Then, it is clear that the quotient $k_n / \ell_n$ diverges,
whereas $m(k_n) \le c m(\ell_n)$ for all $n$.

Moreover, we can observe that,  for large $r>0$, 
$$
  \alpha(r)=\phj'(r)=\log^2(1+r)+\frac{2r}{1+r}\log(1+r)
   \sim\log^2(1+r).
$$
Furthermore, we can easily compute 
\begin{equation}\label{un-div}
  \| u_n \|_{L^\phj(\Omega)} = k_n \lambda_n  = \log( 1 + n)\nearrow \infty.
\end{equation}
 On the other hand, we have 
\begin{align*}
  \frac1{\| u_n \|_{L^\phj(\Omega)}} \io \phj(u_n) \, \d x
  & = \frac1{\log ( 1 + n ) } \int_{E_n} \phj ( n \log( 1 + n )) \, \d x\\
  & \sim \frac1{\log ( 1 + n )} | E_n| n \log ( 1 + n ) \log^2 \big( 1 + n \log (1+n) \big)\\
  & \sim \frac1{\log ( 1 + n )} \frac1{n \log^2(1+n)}  n \log^3 ( 1 + n )
   \sim 1,
\end{align*}
so that the quotient on the \lhs\ is bounded uniformly in~$n$. Comparing
with \eqref{un-div}, we then see that \eqref{bounded2} fails. \qed 
\eddos

We close this subsection with the following three lemmas.

\begin{lem}\label{L:snd}
Let {\rm (a)} and {\rm {(c)}} of {\rm Assumption \ref{hp:alpha}} hold. 
Then, for any $\lam > 0$, there exists a constant $C_\lam>0$ depending on $\lam$ such that
\begin{equation}\label{equiv-phi}
C_\lam^{-1} \varphi(x, r) \leq \varphi(x,\lam r) \leq C_\lam \varphi(x, r)
\ \mbox{ for a.e. } x \in \Omega \ \mbox{ and } \ r \geq 0.
\end{equation}
Moreover, the assertion above is also true for the conjugate $\phj^*$.
\end{lem}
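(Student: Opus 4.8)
The plan is to exploit the $\Delta_2$-condition on $\phj$ (and, for the conjugate statement, on $\phj^*$) to dominate $\phj(x,\lam r)$ by a power of $K$ times $\phj(x,r)$, using Proposition~\ref{P:nondec} to pass between dyadic multiples. First I would treat the case $\lam \ge 1$: pick $n \in \N$ with $2^{n-1} < \lam \le 2^n$, and then by Proposition~\ref{P:nondec} (monotonicity of $s \mapsto \phj(x,s)$) together with iterated use of the $\Delta_2$-inequality \eqref{K2},
\begin{equation*}
\phj(x,\lam r) \le \phj(x,2^n r) \le K^n \phj(x,r) \quad \mbox{ for a.e. } x \in \Omega \ \mbox{ and } \ r \ge 0 .
\end{equation*}
This gives the right-hand inequality of \eqref{equiv-phi} with $C_\lam := K^n$ when $\lam \ge 1$. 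For $0 < \lam < 1$, convexity of $\phj(x,\cdot)$ with $\phj(x,0)=0$ yields $\phj(x,\lam r) = \phj(x,\lam r + (1-\lam)0) \le \lam \phj(x,r) \le \phj(x,r)$, so the right-hand inequality holds with $C_\lam = 1$ there. Combining, one may take $C_\lam = \max\{1, K^{\lceil \log_2 \lam \rceil}\}$ (or any convenient nondecreasing-in-$\lam$ choice).

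Next I would derive the left-hand inequality of \eqref{equiv-phi} simply by applying the already-proven right-hand inequality with $\lam$ replaced by $1/\lam$ and $r$ replaced by $\lam r$: indeed
\begin{equation*}
\phj(x,r) = \phj\!\left(x, \tfrac1\lam \cdot \lam r\right) \le C_{1/\lam} \, \phj(x,\lam r),
\end{equation*}
so that $\phj(x,\lam r) \ge C_{1/\lam}^{-1} \phj(x,r)$. Replacing $C_\lam$ by $\max\{C_\lam, C_{1/\lam}\}$ if necessary, both inequalities of \eqref{equiv-phi} hold with a single constant depending only on $\lam$ (and on $K$ and the structure of $\phj$).

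Finally, the assertion for $\phj^*$ follows by the identical argument: Lemma~\ref{L:phj*} guarantees that $\phj^*$ satisfies all the conditions of Assumption~\ref{hp:alpha} (in particular $\phj^*$ is even, convex, vanishes at $0$, and satisfies the $\Delta_2$-condition \eqref{K2}), so Proposition~\ref{P:nondec} and the $\Delta_2$-inequality apply verbatim to $\phj^*$ in place of $\phj$. I do not foresee a genuine obstacle here; the only point requiring a little care is keeping the constant uniform in $x$, which is automatic because the $\Delta_2$-constant $K$ in \eqref{K2} is itself uniform in $\Omega$, and making sure the single constant $C_\lam$ serves both inequalities, handled by the symmetrization $\lam \leftrightarrow 1/\lam$ above.
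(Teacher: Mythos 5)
Your proof is correct, and it takes a genuinely different (and in fact more elementary) route to the lower bound than the paper. The paper brackets $\lam$ dyadically as $2^{n-1} < \lam \le 2^n$ on both sides and then invokes Lemma~\ref{lemma:1x} (the superlinear $\Delta_2$-from-below estimate $\varphi(x,2r) \ge K_0 \varphi(x,r)$ with $K_0>2$, which itself relies on the $\nabla_2$-condition) to deduce $\varphi(x,2^{n-1}r) \ge K_0^{n-1}\varphi(x,r)$, giving the left-hand inequality directly. You instead prove only the right-hand inequality (upper bound) from $\Delta_2$ and convexity, and then obtain the left-hand inequality for free by the substitution $\lam \mapsto 1/\lam$, $r \mapsto \lam r$. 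This symmetrization avoids Lemma~\ref{lemma:1x} entirely, so for the $\varphi$-statement your argument uses only the $\Delta_2$-condition on $\varphi$ together with convexity and monotonicity (Proposition~\ref{P:nondec}), whereas the paper's route uses the full hypothesis (c). Both are valid under the stated assumptions; your duality trick is the slicker self-contained argument, while the paper's proof leverages a lemma it needed anyway for other purposes. The treatment of $\varphi^*$ via Lemma~\ref{L:phj*} is handled identically in both.
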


\begin{proof}
In case $\lambda > 1$, one can take $n \in \N$ such that $2^{n-1} < \lam \leq 2^n$. Then we see that
$$
\varphi(x,2^{n-1}r) \leq \varphi(x,\lam r) \leq \varphi(x,2^nr) \quad \mbox{ for } \ r \geq 0.
$$
Using {(c)} of Assumption \ref{hp:alpha} and Lemma \ref{lemma:1x}, we observe that
$$
\varphi(x,2^{n-1}r) \geq K_0^{n-1} \varphi(x,r)
\quad \mbox{ and } \quad
\varphi(x,2^n r) \leq K^n \varphi(x,r).
$$
Thus \eqref{equiv-phi} follows. In case $0 < \lam \leq 1$, let $n \in \N$ be such that 
$2^{-n} < \lam \leq 2^{-n+1}$. The rest of proof runs as in the other case. Moreover, 
one can prove the same assertion for $\phj^*$ in the same way. 
\end{proof}

\begin{lem}\label{L:equiv}
In addition to {\rm (a)} and {\rm (i)} of {\rm (b)} in {\rm Assumption \ref{hp:alpha}}, 
suppose that $\phj$ satisfies the $\Delta_2$-condition, which is a part of {\rm {(c)}} 
of {\rm Assumption \ref{hp:alpha}}. Let $(u_n)$ be a sequence in $L^\phj(\Omega)$. 
Then, the following conditions are equivalent to each other\/{\rm :}
\begin{enumerate}
 \item[(i)] $u_n \to 0$ strongly in $L^\phj(\Omega)$\/{\rm ;}
 \item[(ii)] $\modu(u_n) \to 0$.
\end{enumerate}
Moreover, the equivalence above still holds true  whenever one considers
a convergence property that is \emph{uniform} with respect to some additional 
parameter. 
\end{lem}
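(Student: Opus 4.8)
The plan is to establish the two implications in turn and then observe that all the quantitative bounds involved are uniform, which yields the parametrized version for free. Throughout I would use that, under (a) and (i) of (b) (and taking $\phj$ to be positive with $\phj(x,0)=0$, as allowed by the remarks following Assumption~\ref{hp:alpha}), the functional $\modu$ is a modular and $\Lpso$ is a genuine Musielak--Orlicz space, so the Luxemburg norm obeys the usual unit-ball property $\modu(v)\le 1 \Leftrightarrow \nor{v}{\Lpso}\le 1$; this rests on the left-continuity of $\lam\mapsto\modu(\lam v)$ at $\lam=1$, which follows from Proposition~\ref{P:nondec} and monotone convergence (cf.~\cite{DHHR}).

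For (i)$\Rightarrow$(ii), which does not require the $\Delta_2$-condition, I would argue directly. Given $\vep\in(0,1)$, for $n$ large one has $\nor{u_n}{\Lpso}\le\vep$, hence $\nor{u_n/\vep}{\Lpso}\le 1$ and so $\modu(u_n/\vep)\le 1$ by the unit-ball property; then convexity of $\modu$ together with $\modu(0)=0$ gives $\modu(u_n)\le\vep\,\modu(u_n/\vep)\le\vep$. Since $\vep$ is arbitrary, $\modu(u_n)\to 0$.

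For (ii)$\Rightarrow$(i), which is where the $\Delta_2$-condition genuinely enters, the key claim is that $\modu(u_n)\to 0$ forces $\modu(\lam u_n)\to 0$ for \emph{every} $\lam>0$. For $\lam\ge 1$, pick $k\in\NN$ with $2^k\ge\lam$; then Proposition~\ref{P:nondec} together with $k$ iterations of \eqref{K2} gives $\phj(x,\lam|u_n(x)|)\le\phj(x,2^k|u_n(x)|)\le K^k\phj(x,|u_n(x)|)$ for a.e.~$x$, and integration over $\Omega$ yields $\modu(\lam u_n)\le K^k\modu(u_n)\to 0$; for $\lam\in(0,1)$ convexity alone gives $\modu(\lam u_n)\le\lam\,\modu(u_n)\to 0$. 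To conclude, fix $\vep\in(0,1)$ and apply the claim with $\lam=1/\vep$: for $n$ large $\modu(u_n/\vep)\le 1$, whence $\nor{u_n}{\Lpso}\le\vep$ by the unit-ball property; as $\vep$ is arbitrary, $u_n\to 0$ in $\Lpso$.

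Finally, for the uniform statement I would simply note that in both implications the only multiplicative constant that appears is $K^k$, which depends solely on $K$ and on $\lam=1/\vep$, never on $n$ or on any auxiliary parameter, while the index beyond which the estimates hold depends only on $\vep$ and on the rate of convergence of the scalar quantity $\nor{u_n}{\Lpso}$, resp.~$\modu(u_n)$. Hence, replacing sequences by families depending on an extra parameter and $\nor{u_n}{\Lpso}$, $\modu(u_n)$ by their suprema over that parameter, the same chain of inequalities carries over verbatim. I do not anticipate a real obstacle here; the one point deserving a little care is the repeated passage back and forth between modular and norm via the unit-ball property, which is legitimate precisely because (a) and (i) of (b) make $\modu$ a (left-continuous) modular.
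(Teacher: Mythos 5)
Your proposal is correct and follows essentially the same route as the paper's own proof: for (ii)$\Rightarrow$(i) you use the $\Delta_2$-condition to upgrade $\modu(u_n)\to0$ to $\modu(\lam u_n)\to0$ for every $\lam>1$ and then invoke the unit-ball property, which is exactly the paper's argument, and for (i)$\Rightarrow$(ii) your unit-ball-plus-convexity computation reproduces the bound $\modu(u_n)\le\vep$ that the paper obtains by citing the inequality $\rho(u)\le\nor{u}{\rho}$ for $\nor{u}{\rho}\le1$ from \cite[Lemma 2.1.15]{DHHR}. Your observation that every constant is independent of $n$ and of any auxiliary parameter, so the estimates pass to suprema over a parametrized family, is also precisely how the paper dispatches the uniform version.
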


The above lemma is proved in~\cite[Lemma 2.1.11]{DHHR}, where equivalence for uniform convergence is however not mentioned. We only need a slight modification to fill the gap; however, for the completeness, we give a proof.

\begin{proof}
Let $(u_n^\ell)$ be a sequence in $L^\phj(\Omega)$ depending on a parameter $\ell$.
We first assume that $u_n^\ell \to 0$ strongly in $V$ and uniformly in $\ell$, as $n \to +\infty$. We use the fact that
$$
\rho(x) \leq \|x\|_\rho \quad \mbox{ if } \ x \in X_\rho, \ \|x\|_\rho \leq 1
$$
for general semimodular space $X_\rho$ (see~\cite[Corollary 2.1.15]{DHHR}). Hence one immediately obtains $\modu(u_n^\ell) \to 0$ uniformly in $\ell$ as $n \to +\infty$. We next suppose that $\modu(u_n^\ell) \to 0$ uniformly in $\ell$ as $n \to +\infty$. We claim that, for any $\lam > 1$, $\modu(\lam u_n^\ell) \to 0$ uniformly in $\ell$ as $n \to +\infty$. Indeed, one can take $m \in \N$ such that $2^{m-1} < \lam \leq 2^m$. Hence it follows from \eqref{K2} that $\modu(\lam u_n^\ell) \leq \modu(2^m u_n^\ell) \leq K^m \modu(u_n^\ell) \to 0$ uniformly in $\ell$ as $n \to +\infty$. We further deduce that $\sup_{\ell} \|u_n^\ell\|_{L^\phj(\Omega)} \leq 1/\lam$ for $n$ large enough. Thus $u_n^\ell \to 0$ strongly in $V$ uniformly in $\ell$ as $n \to +\infty$. In particular, in the case when $u_n^\ell = u_n$ (i.e., it is independent of $\ell$), we immediately obtain the original assertion.
\end{proof}

\begin{lem}\label{L:bdd-equiv}
In addition to {\rm (a)} and {\rm (i)} of {\rm (b)} in {\rm Assumption \ref{hp:alpha}}, 
assume that $\varphi$ fulfills the $\Delta_2$-condition. Let $(u_n)$ be 
a sequence in $L^\phj(\Omega)$. Then the following {\rm (i)} and {\rm (ii)} are equivalent\/{\rm :}
\begin{enumerate}
 \item[(i)] $(u_n)$ is bounded in $L^\phj(\Omega)$\/{\rm ;}
 \item[(ii)] $\modu(u_n)$ is bounded.
\end{enumerate}
\end{lem}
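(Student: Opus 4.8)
The plan is to establish the two implications separately; only the direction (i) $\Rightarrow$ (ii) will actually use the $\Delta_2$-condition, whereas (ii) $\Rightarrow$ (i) is a soft consequence of the convexity of the modular and needs neither (a), (i) of (b) nor $\Delta_2$.

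First I would treat (ii) $\Rightarrow$ (i). Assuming $\modu(u_n) \le C$ for every $n$ (so that, in particular, each $u_n$ lies in $L^\phj_{OC}(\Omega)$), I set $\lambda := \max\{C,1\}$. Since $\modu$ is convex with $\modu(0)=0$ and $\lambda \ge 1$, one has $\modu(u_n/\lambda) \le \lambda^{-1}\modu(u_n) \le \lambda^{-1}C \le 1$, and therefore $\|u_n\|_{\Lpso} \le \lambda$ for all $n$, directly from the definition of the Luxemburg-type norm.

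Next I would prove (i) $\Rightarrow$ (ii), which is where the $\Delta_2$-condition enters. Let $M \ge 1$ be such that $\|u_n\|_{\Lpso} \le M$ for all $n$. By homogeneity of the norm, $\|u_n/M\|_{\Lpso} \le 1$, so the unit-ball property of semimodular spaces (i.e., $\rho(x) \le \|x\|_\rho$ whenever $\|x\|_\rho \le 1$, see~\cite[Corollary 2.1.15]{DHHR}) gives $\modu(u_n/M) \le 1$; in particular $\modu(u_n)$ is finite for each $n$, which already hinges on the $\Delta_2$-condition through the identity $\Lpso = E^\phj(\Omega) = L^\phj_{OC}(\Omega)$. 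Then I would pick $m \in \NN$ with $2^{m-1} < M \le 2^m$ (if $M=1$ nothing more is needed) and use that $r \mapsto \phj(x,r)$ is non-decreasing on $[0,+\infty)$ (Proposition~\ref{P:nondec}) to write $\modu(u_n) = \modu\big(M(u_n/M)\big) \le \modu\big(2^m(u_n/M)\big)$; iterating the $\Delta_2$-inequality \eqref{K2} $m$ times then yields $\modu(u_n) \le K^m\,\modu(u_n/M) \le K^m$, uniformly in $n$. Exactly as in Lemma~\ref{L:equiv}, the same bookkeeping works verbatim when $(u_n)$ carries an extra parameter, producing bounds uniform in that parameter.

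I do not expect a serious obstacle: the argument is essentially the ``$\lambda>1$'' computation already performed inside the proof of Lemma~\ref{L:equiv}, combined with the elementary convexity estimate for the reverse implication. The only point worth stating carefully is the a priori finiteness of $\modu(u_n)$ under (i); once the $\Delta_2$-condition is invoked to guarantee $\Lpso = E^\phj(\Omega)$, the chain of $\Delta_2$-inequalities is legitimate and the conclusion is immediate.
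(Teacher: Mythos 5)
Your proof is correct and takes essentially the same approach as the paper: (ii)\,$\Rightarrow$\,(i) by a soft convexity estimate (the paper cites the equivalent inequality $\|u\|_\rho\le\rho(u)+1$ from \cite[Lemma 2.1.15]{DHHR}, while you derive it directly), and (i)\,$\Rightarrow$\,(ii) by the unit-ball property followed by a dyadic iteration of the $\Delta_2$-inequality. One small remark: the appeal to $L^\phj(\Omega)=E^\phj(\Omega)$ to justify a priori finiteness of $\modu(u_n)$ is superfluous, since the pointwise $\Delta_2$-chain $\modu(u_n)\le K^m\modu(u_n/M)\le K^m$ is valid in $[0,+\infty]$ and yields finiteness directly.
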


\begin{proof}
The implication (ii) $\Rightarrow$ (i) holds generally\/{\rm ;} indeed, it holds that
\begin{equation}\label{norm<modu}
\|u\|_\rho \leq \rho(u)+1 \ \mbox{ for } u \in X_\rho
\end{equation}
for any semimodular space $(X_\rho, \|\cdot\|_\rho)$ (see~\cite[(c) of Lemma 2.1.15]{DHHR}). Hence it suffices to prove the inverse implication. Let $(u_n)$ be a bounded sequence in $L^\phj(\Omega)$, say $\|u_n\|_{L^\phj(\Omega)} \leq C$ for $n \in \N$. We can assume $C \geq 1$ without loss of generality. Then it follows from the definition of $\|\cdot\|_{L^\phj(\Omega)}$ that
$$
\modu (u_n/C) \leq 1.
$$
On the other hand, one can take $k \in \N$ (independent of $n$) such that $2^{-k} < 1/C \leq 2^{-k+1}$, and hence,
\begin{equation}\label{1}
\modu(u_n/2^{k}) \leq 1
\end{equation}
(see Proposition \ref{P:nondec}). Employing the $\Delta_2$-condition of $\varphi$ (see \eqref{K2}), we deduce that
\begin{align*}
\varphi(x,u_n(x)) \leq K^{k} \varphi(x,u_n(x)/2^{k}) \ \mbox{ for a.e. } x \in \Omega,
\end{align*}
which along with \eqref{1} implies
$$
\modu(u_n) \leq K^{k} < +\infty\ \mbox{ for } n \in \N.
$$
Thus the proof is completed.
\end{proof}

\section{Subdifferentials in Musielak-Orlicz spaces}
\label{subsec:convex}

In this section, we shall develop a number of tools, related to
subdifferentials and duality methods in Musielak-Orlicz spaces.
This machinery will play a key role in the proof of Theorem \ref{teo:main} in \S \ref{sec:proof}.

\subsection{Characterization of the operator $A$}

Our first purpose stands in characterizing a bit more the operator $A = \partial_\Omega \modu$. We start with
\bele\label{lemma:1}
 Let \/{\rm Assumption \ref{hp:alpha}} hold and let $u\in \Lpso$. Let $\xi : \Omega \to \RR$ be a function defined by $\xi(x) = \alpha(x,u(x))$ for a.e.~$x \in \Omega$.
 Then $\xi \in \Lpsostar$ and $\xi u \in L^1(\Omega)$. 
 Moreover, it holds that $\xi \in A(u) = \deo \modu(u)$.
\enle
\begin{proof}
Let us recall that, under the present assumptions, $\phj$ is differentiable in~the second
variable and $\alpha(x,r) = \de \phj(x,r) = \frac{\partial \phj}{\partial r}(x,r)$
for all $r\in\RR$ and a.e.~$x\in \Omega$. Namely, the subdifferential coincides
with the partial derivative with respect to $r$. Moreover, 
without loss of generality, we may assume $u \in L^{\phj}(\Omega)$ to be nonnegative. Indeed, we find by $\alpha(x,0)=0$ that $\alpha(x,u(x)) = \alpha(x,u_+(x))-\alpha(x,u_-(x))$, where $u_\pm(x) := \max \{\pm u(x),0\}$.
Then, for $n\in \NN$, we define the truncated function 
$u_n:=\min\{n,u\}$ and we also set $\xi_n(x)=\alpha(x,u_n(x))$. 
Then, it is easy to check that both sequences $(u_n)$ 
and $(\xi_n)$ are increasing. Moreover,
$\xi_n$ is measurable in $\Omega$, since $\alpha$ is Carath\'eodory 
and $u_n$ is measurable in $\Omega$. Then, by the definition of 
subdifferential, we have
\begin{align}
0 &\leq  \xi_n(x) u_n(x) \nonumber \\
 &= \xi_n(x) \big( 2 u_n(x) - u_n(x) \big)
    \le \phj(x,2 u_n(x)) - \phj(x, u_n(x)) 
\label{co11}
\end{align}
for a.e.~$x \in \Omega$.
Here we used $\alpha(x,0)=0$, i.e., $\alpha(x,r)r \geq 0$ for $r \in \R$.
Hence, invoking {(c)} of Assumption~\ref{hp:alpha}, we infer that
\begin{align}
0 \leq \xi_n(x) u_n(x) 
 &\le (K-1) \phj(x,u_n(x)) \nonumber \\
 &= (K-1) \phj(x,|u_n(x)|) 
   \le (K-1) \phj(x,|u(x)|)\label{co12}
\end{align}
for a.e.~$x \in \Omega$. Here we also used the fact that $r \mapsto \varphi(x,r)$ 
is even and increasing on $[0,+\infty)$ for a.e.~$x \in \Omega$. Consequently, 
noting that $u_n \to u$ and $\xi_n\to \xi = \alpha(\cdot,u(\cdot))$ (by the 
continuity of $\alpha(x,r)$ in $r$) a.e.~in $\Omega$ and applying Lebesgue's 
dominated convergence theorem, we deduce that
\begin{equation}\label{co13}
  \xi u \in L^1(\Omega) 
   \quext{and } \io \xi u \, \d x \le (K-1) \io \phj(\cdot,|u|) \, \d x
    = (K-1) \modu(u). 
\end{equation}
Now, by the use of the Fenchel-Moreau identity, we see that
\begin{equation}\label{co14}
  \phj^*(x,\xi_n(x)) = \xi_n(x) u_n(x) - \phj(x,u_n(x))
   \le \xi_n(x) u_n(x) 
   \le \xi(x) u(x) 
\end{equation}
for a.e.~$x\in \Omega$.
Hence, on account of \eqref{co13}, we can apply once more the dominated convergence theorem to get
\begin{equation*}
  \io \phj^*(\cdot,|\xi|) \, \d x
   = \lim_{n\to+\infty} \io \phj^*(\cdot,|\xi_n|) \, \d x 
   = \io \big( \xi u - \phj(\cdot,|u|) \big) \, \d x
   \in \RR.
\end{equation*}
 This implies in particular that 
\begin{equation*}
  \modud(\xi) = \io \phj^*(\cdot,|\xi|) \, \d x
   = \duav{\xi,u} -\modu(u) < +\infty. 
\end{equation*}
Consequently, we have obtained $\xi \in L^{\phj^*}(\Omega)$ and 
$\xi \in \deo\modu(u)$ from the fact that $\modud = (\modu)^*$
(see (vii) of Proposition \ref{P:proper}), as desired. 
\end{proof}
We have essentially proved that, whenever $u\in L^\varphi(\Omega)$, then the ``pointwise''
function $\alpha(x,u(x))$ is an element of the set $A(u)$,
which may, in principle, contain more than one element of $\Vp$.
However,  under our assumptions, this can in fact never occur, 
because  $\alpha$ is monotone, continuous and coercive.
\begin{lem}[Representations of $A$ and $\calA$]\label{lem:sing}
 Let\/ {\rm Assumption~\ref{hp:alpha}} hold and let $u\in V$.
 Then $A(u) = \{\alpha(\cdot,u(\cdot))\}$. Moreover, for $u \in \calV$, 
 it holds that $\calA(u) = \{\alpha(\cdot,u(\cdot,\cdot))\} = A(u(\cdot))$.
\end{lem}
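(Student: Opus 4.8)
The plan is to combine Lemma~\ref{lemma:1}, which already produces the ``pointwise'' candidate, with a single-valuedness argument for $A=\deo\modu$. Lemma~\ref{lemma:1} tells us that for each $u\in V$ the function $x\mapsto\alpha(x,u(x))$ belongs to $\Lpsostar=\Vp$ and lies in $A(u)$; hence the map $\tilde A\colon u\mapsto\alpha(\cdot,u(\cdot))$ is an everywhere-defined, single-valued operator $V\to\Vp$ whose graph is contained in that of $A$. Since a maximal monotone operator admits no proper monotone extension, it will suffice to prove that $\tilde A$ itself is maximal monotone; then $A=\tilde A$ and $A(u)=\{\alpha(\cdot,u(\cdot))\}$ for every $u\in V$.

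To see that $\tilde A$ is maximal monotone I would invoke the classical criterion (see, e.g.,~\cite{barbu,Br}) that an everywhere-defined, monotone, hemicontinuous operator from a Banach space into its dual is maximal monotone, and check its hypotheses. Everywhere-definedness is exactly Lemma~\ref{lemma:1}. Monotonicity is pointwise:
\[
\duav{\tilde A(u)-\tilde A(v),\,u-v}=\io\big(\alpha(x,u(x))-\alpha(x,v(x))\big)\big(u(x)-v(x)\big)\,\d x\ge 0,
\]
because $r\mapsto\alpha(x,r)$ is nondecreasing. For hemicontinuity, fix $u,v,w\in V$ and note that for $t$ in a bounded interval $[a,b]$ one has $|u+tv|\le h:=|u|+(|a|+|b|)|v|\in\Lpso$, whence, using that $\alpha(x,\cdot)$ is odd and nondecreasing on $[0,+\infty)$ together with Lemma~\ref{lemma:1} applied to $h$, $|\alpha(\cdot,u+tv)\,w|\le\alpha(\cdot,h)\,|w|\in L^1(\Omega)$ (the last membership by H\"older's inequality~\eqref{hoelder}, since $\alpha(\cdot,h)\in\Lpsostar$ and $|w|\in\Lpso$); as $t\mapsto\alpha(x,u(x)+tv(x))$ is continuous (Remark~\ref{R:al-conti}), dominated convergence shows $t\mapsto\duav{\tilde A(u+tv),w}$ is continuous. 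Thus $\tilde A$ is maximal monotone and the first assertion follows. (An equivalent route is to prove directly that $\modu$ is G\^ateaux differentiable at $u$ with derivative $w\mapsto\io\alpha(x,u)\,w\,\d x$, via the same domination, and then use that the subdifferential of a convex functional reduces to the G\^ateaux derivative wherever the latter exists.)

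For the time-dependent statement I would first observe that the generalized $\Phi$-function $\phj$, now viewed as a function on $Q=\Omega\times(0,T)$, still satisfies Assumption~\ref{hp:alpha}: measurability in $(x,t)$ is immediate, the $\Delta_2$-conditions~\eqref{K2} are unchanged, and properness together with local integrability on $Q$ come from Proposition~\ref{P:proper}(viii), so that $\calVp\simeq L^{\phj^*}(Q)$ by Lemma~\ref{L:MO-chk}. Consequently the $Q$-versions of Lemma~\ref{lemma:1} and of the argument above apply verbatim, yielding $\alpha(\cdot,u(\cdot,\cdot))\in\calVp$ and $\calA(u)=\{\alpha(\cdot,u(\cdot,\cdot))\}$ for $u\in\calV$. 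Finally, since $u(t)=u(\cdot,t)\in V$ for a.e.~$t\in(0,T)$ and $A(u(t))=\{\alpha(\cdot,u(\cdot,t))\}$ by the first part, the assignment $t\mapsto A(u(t))$ is represented by the very same element $\alpha(\cdot,u(\cdot,\cdot))\in\calVp$, whence $\calA(u)=A(u(\cdot))$.

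I expect the only genuinely non-formal point to be the limit passage used for hemicontinuity (or, in the alternative route, the differentiation under the integral sign): one needs an $L^1$-dominating function for $\alpha(\cdot,u+tv)\,w$ that is uniform for $t$ in bounded sets, and this is precisely what the $L^{\phj^*}$-bound in Lemma~\ref{lemma:1} supplies. In effect the real analytic content has already been extracted there, and what remains for Lemma~\ref{lem:sing} is essentially bookkeeping (plus transferring Assumption~\ref{hp:alpha} to $Q$).
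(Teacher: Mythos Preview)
Your proof is correct and takes a genuinely different route from the paper's. The paper argues by direct contradiction: assuming some $\xi\in A(u)$ differs from $\alpha(\cdot,u)$ on a set $E$ of positive measure (say $\xi(x)\ge\alpha(x,u(x))+\epsi$ on $E$), they construct a perturbation $v=u+\delta\chi_E$ with $\delta(x)>0$ chosen, via the continuity and coercivity of $\alpha(x,\cdot)$, so that $\alpha(x,v(x))=\alpha(x,u(x))+\epsi/2$ on $E$; testing the monotonicity inequality $\duav{\xi-\alpha(\cdot,v),u-v}\ge0$ then gives $0\le -\tfrac{\epsi}{2}\int_E\delta$, a contradiction. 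Your approach instead shows that the pointwise operator $\tilde A$ is everywhere-defined, monotone and hemicontinuous---hence maximal monotone by Minty's lemma---and concludes from $\tilde A\subset A$ that $\tilde A=A$. Both arguments rest on the same analytic ingredients (continuity of $\alpha(x,\cdot)$ from Remark~\ref{R:al-conti} and the $L^{\phj^*}$-bound of Lemma~\ref{lemma:1}); the paper packages them into a hand-built test function, while you package them into a dominated-convergence argument. Your route is arguably cleaner in that it sidesteps the verification that the perturbed $v$ lies in $V$, and your alternative via G\^ateaux differentiability of $\modu$ is perhaps the most transparent formulation of all three. For the $Q$-version both proofs proceed identically, by transferring the hypotheses to the cylinder.
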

\begin{proof}
Let $u \in V$ and suppose on the contrary that $\deo \modu(u)$ 
contains an element $\xi$ which differs from $\alpha(\cdot,u)$
on a subset of $\Omega$ having strictly positive measure.
In particular, we may assume that there exist $\epsi>0$
and a measurable set $E\subset \Omega$ with $|E|>0$ such that
$\xi(x) \ge \alpha(x,u(x)) + \epsi$ for $x \in E$. Let us then set
\begin{equation*}
  v(x):= \begin{cases}
    u(x) &\text{if }\, x\in  \Omega\setminus E ,\\
    u(x) + \delta(x) &\text{if }\, x\in  E ,
   \end{cases} 
\end{equation*}
where $\delta(x)>0$ is chosen in such a way that
$\alpha(x,u(x)+\delta(x)) = \alpha(x,u(x)) + \epsi/2$
due to the continuity and coercivity of $\alpha(x,r)$ 
in $r$ (indeed, the latter follows from \eqref{Nfun}).
In view of the fact that $\alpha(\cdot,v) \in A(v)$ due to the 
previous lemma, we then observe that
\begin{equation*}
  0 \le \duav{ \xi - \alpha(\cdot,v), u - v } 
   = \int_E ( \xi - \alpha(\cdot,v) ) ( - \delta ) \, \d x
   \le - \frac\epsi2 \int_E \delta \, \d x,
\end{equation*}
whence $\epsi = 0$, that is, a contradiction. Furthermore, repeating the argument so far
(including Lemma \ref{lemma:1}) with $\Omega$ replaced by $Q = \Omega\times(0,T)$, 
one can verify that, for $u \in \calV$, the set $\calA(u)$ consists of the function 
$(x,t) \mapsto \alpha(x,u(x,t))$ only.
\end{proof}

Thanks to the previous lemmas, the operator $A$ in equation \eqref{eq} can be 
interpreted ``pointwisely'' in such a way that \eqref{eq} can 
be handled in the ``concrete'' form \eqref{eqn}.

\subsection{Maximality criteria based on the Musielak-Orlicz modular}\label{Ss:max}

In view of the fact that the operator $A$ is tied to the choice of the space $V$, we shall see that $A$ 
enjoys further important properties. The following result, extending \cite[Theorem 1.2, Chap.~II, p.~39]{barbu}, 
tells us that the operator $A$ can be used, in place of the duality mapping of $V$, in order to 
characterize the maximality of nonlinear monotone operators from $V$ to $2^{\Vp}$. This property 
will play a basic role in our method for approximating equation~\eqref{eqn} (see \S \ref{sec:proof})
as well as in proving a chain-rule formula customized for the 
Orlicz-Musielak setting (see \S \ref{subsec:chain}).
\begin{thm}[Maximality criteria based on the modular]\label{thm:mamo}
 Let {\rm Assumption \ref{hp:alpha}} hold. Let $S : V \to 2^{V^*}$ be a
 {\rm (}possibly nonlinear\/{\rm )} monotone operator. 
 Then, $S$ is maximal if and only if for every, or some,
 $\lambda>0$, $S+\lambda A$ is surjective with $A = \partial_\Omega \modu$.
\end{thm}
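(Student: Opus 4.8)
The plan is to follow the classical template used to prove the analogous statement with the duality mapping (Minty–Browder / Barbu, Theorem 1.2, Chap.~II), replacing the duality map $F_V$ by the operator $A=\partial_\Omega\modu$, and checking that all the structural properties of $F_V$ that enter the classical argument are shared by $A$. The two directions are: (1) if $S+\lambda A$ is surjective for some (equivalently, every) $\lambda>0$, then $S$ is maximal monotone; (2) if $S$ is maximal monotone, then $S+\lambda A$ is surjective for every $\lambda>0$. For (1) I would argue by contradiction: suppose $S$ is monotone but not maximal, so there is a pair $(u_0,\xi_0)\in V\times V^*$ monotonically related to the whole graph of $S$ but not belonging to it. By surjectivity of $S+\lambda A$ (for, say, $\lambda=1$) pick $(u,\eta)$ with $\eta\in S(u)$ and $\eta+A(u)\ni \xi_0+A(u_0)$, i.e. $A(u_0)-A(u)\ni \xi_0-\eta$ (recalling $A$ is single-valued by Lemma~\ref{lem:sing}, this is an equality $A(u_0)-A(u)=\xi_0-\eta$). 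Testing the monotonicity inequality $\duav{\eta-\xi_0,u-u_0}\ge 0$ (which holds because $(u_0,\xi_0)$ is monotonically related to $(u,\eta)\in S$) against the identity gives $\duav{A(u)-A(u_0),u-u_0}\le 0$. The key point is then \emph{strict monotonicity of $A$}: since $\alpha(x,\cdot)$ is strictly increasing for a.e.~$x$ (Assumption~\ref{hp:alpha}(a), cf.\ Lemma~\ref{L:phj*}), one has $\duav{A(u)-A(u_0),u-u_0}=\io(\alpha(x,u)-\alpha(x,u_0))(u-u_0)\,\d x\ge 0$ with equality iff $u=u_0$ a.e.; hence $u=u_0$, and then $\xi_0=\eta\in S(u_0)$, contradicting $(u_0,\xi_0)\notin S$.

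For direction (2) — which I expect to be the main obstacle — I would use the standard device of reducing surjectivity of $S+\lambda A$ to solving, for arbitrary $f\in V^*$, the inclusion $\lambda A(u)+S(u)\ni f$ via Yosida regularization of $S$. Concretely: fix $f\in V^*$ and $\lambda>0$. For $\varepsilon>0$ consider the approximate problem $\lambda A(u_\varepsilon)+S_\varepsilon(u_\varepsilon)=f$, where $S_\varepsilon$ is the Yosida approximation of $S$ \emph{relative to $A$}, i.e.\ $S_\varepsilon=\tfrac1\varepsilon\bigl(A-A\circ(A+\varepsilon S)^{-1}A^{-1}\cdots\bigr)$ — more cleanly, one solves $\lambda A(u_\varepsilon)+A(w_\varepsilon)/\varepsilon = f$ with $w_\varepsilon$ related to $u_\varepsilon$ through $S$; but the honest route is: since $A$ is itself maximal monotone, single-valued, bounded, coercive (by~\eqref{bounded2}) and hemicontinuous (continuity of $\alpha(x,\cdot)$ plus $\Delta_2$), the sum $\lambda A+S_\varepsilon$ with $S_\varepsilon$ the classical ($A$-)Yosida regularization is maximal monotone and coercive, hence surjective by the Browder–Minty theorem applied in the reflexive Banach space $V$ (reflexivity from Proposition~\ref{P:reflexive}). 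This produces $u_\varepsilon\in V$ with $\lambda A(u_\varepsilon)+S_\varepsilon(u_\varepsilon)=f$. Then one derives a priori bounds: pairing with $u_\varepsilon-u_0$ for a fixed $u_0\in D(S)$ and using coercivity of $A$ in the form~\eqref{bounded}–\eqref{bounded2} gives $\|u_\varepsilon\|_V\le C$ and $\|A(u_\varepsilon)\|_{V^*}\le C$ via Lemma~\ref{lemma:1b}(ii) (the modular of $A(u_\varepsilon)=\alpha(\cdot,u_\varepsilon)$ is controlled by $\duav{A(u_\varepsilon),u_\varepsilon}$ through the Fenchel–Moreau identity as in Lemma~\ref{lemma:1}), whence $\|S_\varepsilon(u_\varepsilon)\|_{V^*}\le C$ as well, and a bound on the resolvent $\|u_\varepsilon-J^A_\varepsilon u_\varepsilon\|$.

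Passing to the limit $\varepsilon\to 0$ is the delicate part: by reflexivity extract $u_\varepsilon\debo u$, $A(u_\varepsilon)\debo \zeta$, $S_\varepsilon(u_\varepsilon)\debo g$ in $V^*$, with $\lambda\zeta+g=f$; one shows the $A$-resolvents $\tilde u_\varepsilon:=(\,\cdot\,)$ of $S$ at $u_\varepsilon$ also converge weakly to $u$ and $S_\varepsilon(u_\varepsilon)\in S(\tilde u_\varepsilon)$. The closure of the (maximal monotone, hence demiclosed) graph of $S$ then gives $g\in S(u)$ provided $\limsup\duav{S_\varepsilon(u_\varepsilon),\tilde u_\varepsilon}\le\duav{g,u}$, and similarly $\zeta=A(u)$ provided $\limsup\duav{A(u_\varepsilon),u_\varepsilon}\le\duav{\zeta,u}$; both $\limsup$ inequalities follow by testing the approximate equation with $u_\varepsilon$ and combining with the monotonicity of $A$ and $S_\varepsilon$, exactly as in the classical proof. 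This yields $\lambda A(u)+S(u)\ni f$, i.e.\ surjectivity. The ``some $\lambda$ $\Rightarrow$ every $\lambda$'' equivalence is then immediate from direction (1) (surjectivity for one $\lambda$ gives maximality, which by (2) gives surjectivity for all $\lambda$). The main obstacle, as indicated, is organizing the $\varepsilon\to0$ limit so that the monotonicity-based $\limsup$ arguments go through with $A$ in the role of the duality map — this requires the boundedness and coercivity of $A$ packaged in Lemmas~\ref{lemma:1} and~\ref{lemma:1b} and the strict monotonicity/demiclosedness of $A$, all of which are available in the excerpt; one must also be careful that $A$ is bounded on bounded sets (which follows from the $\Delta_2$-condition via Lemma~\ref{L:bdd-equiv} applied to $\phj^*$) so that the weak limits of $A(u_\varepsilon)$ stay in $V^*$.
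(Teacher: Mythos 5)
Your two-directional strategy mirrors the paper's. Direction (1) is word-for-word the paper's argument: take a pair monotonically related to $G(S)$, use surjectivity of $S+\lambda A$ to produce a competitor $(u_1,\xi_1)\in G(S)$ with $\xi_1+\lambda A(u_1)=\xi_0+\lambda A(u_0)$, pair, and invoke the strict monotonicity of $A$ (coming from the strict convexity of $\phj(x,\cdot)$ in Assumption~\ref{hp:alpha}(a)) to force $u_0=u_1$. There is a small sign slip in your rearrangement --- from $\eta+\lambda A(u)=\xi_0+\lambda A(u_0)$ one gets $A(u_0)-A(u)=(\eta-\xi_0)/\lambda$, not $(\xi_0-\eta)/\lambda$ --- but the conclusion is unaffected once the pairing is written consistently.

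For direction (2), the paper itself gives no details: it records that $A$ is demicontinuous, bounded and coercive (Lemma~\ref{L:A-conti}), cites \cite[Theorem 1.1, Chap.~II]{barbu}, and instructs the reader to follow the lines of the proof of Barbu's Theorem 1.2 with $A$ in the role of the duality mapping. Your plan reconstructs exactly those lines (Yosida-regularize $S$, solve the regularized problem by Browder--Minty, derive a priori bounds through the Fenchel--Moreau identity and Lemma~\ref{lemma:1b}, pass to the limit by demiclosedness and a $\limsup$-monotonicity argument). The one point worth pinning down is your opening gambit of a ``Yosida approximation of $S$ relative to $A$'': such an object is \emph{not} available at this stage, because defining the corresponding $A$-resolvent at $u$ would require solving $A((v-u)/\varepsilon)+S(v)\ni 0$, which is precisely the surjectivity statement you are trying to prove; the paper's modular-based resolvent (Lemma~\ref{lemma:res}) is introduced later, only for subdifferential-type operators, and rests on this very theorem. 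Your fallback to ``the classical Yosida regularization'' is the right move: $S_\varepsilon$ must be taken with respect to the duality map on the reflexive space $V$ (after a strictly convex renorm, if needed), while $A$ enters only to supply the coercivity, boundedness and demicontinuity of $\lambda A+S_\varepsilon$ needed for Browder--Minty and the subsequent limit. With that clarified, your plan matches the paper's intended argument.
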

\begin{proof}
Assume that $S + \lambda A$ is surjective. Then, the maximality of $S$ may be proved by following the lines of \cite{barbu} (see also~\cite{BCP}). Indeed, let $[u_0,\xi_0] \in V \times V^*$ be such that
 \begin{equation}\label{SlA}
  \langle \xi_0-\eta, u_0-v \rangle_V 
  \geq 0 \quad \mbox{ for all } \ [v,\eta] \in G(S),
 \end{equation}
 where $G(S)\subset V\times V^*$ denotes the graph of $S$. Then, due to the surjectivity of $S + \lambda A$, one can take $u_1 \in D(S)$ and $\xi_1 \in S(u_1)$ such that
 \begin{equation}\label{xiA}
 \xi_1 + \lambda A(u_1) = \xi_0 + \lambda A(u_0) \in \Vp.
 \end{equation}
 Substituting $v = u_1$ and $\eta = \xi_1$ to \eqref{SlA}, one has
 $$
  \langle \xi_0-\xi_1,u_0-u_1 \rangle_V \geq 0, 
 $$
 which along with \eqref{xiA} implies
 $$ 
   \lambda \langle A(u_1)-A(u_0),u_0-u_1 \rangle_V 
   \geq 0.
 $$
 Since $A$ is strictly monotone (by the strict convexity of $\phj(x,\cdot)$, 
 see (a) of Assumption \ref{hp:alpha}), we deduce that $u_0=u_1$, and hence, 
 $\xi_0 = \xi_1$ by \eqref{xiA}. Thus we obtain $[u_0,\xi_0] = [u_1,\xi_1] \in G(S)$. 
 Therefore, $S$ turns out to be maximal. 

Conversely, assume that $S$ is maximal. To prove the surjectivity of $S+\lam A$ for any $\lam>0$, we shall use the demicontinuity, boundedness and coercivity of $A:V\to V^*$, which will be proved just below. With these properties of $A$, we can apply~\cite[Theorem 1.1, Chap.\,II, p.\,34]{barbu} and conclude the proof following the lines of~\cite[Proof of Theorem 1.2, Chap.\,II, p.\,39]{barbu}. The details are left to the reader.
\end{proof}

The properties of $A$ used above will be proved in the following

\begin{lem}\label{L:A-conti}
Under \/{\rm Assumption \ref{hp:alpha}}, the operator $A:V \to V^*$ is demicontinuous, bounded and coercive, and moreover, so does $A^{-1}:V^* \to V$. In addition, if $\modu$ is uniformly convex, then $A : V \to V^*$ is continuous.
\end{lem}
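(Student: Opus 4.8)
The plan is to make the \emph{Fenchel--Moreau equality} $\duav{A(u),u} = \modu(u) + \modud(A(u))$ the workhorse: it is available for $A(u)=\alpha(\cdot,u(\cdot))\in\partial_\Omega\modu(u)$ by Lemmas~\ref{lemma:1} and~\ref{lem:sing}, and it converts statements about the operator into statements about the two modulars, which are then handled through the norm$\leftrightarrow$modular dictionary of Lemmas~\ref{L:bdd-equiv} and~\ref{L:equiv} (valid under the $\Delta_2$-condition). I first record two reductions. The operator $A$ is a bijection of $V$ onto $\Vp$: injectivity follows from the strict convexity of $\phj(x,\cdot)$, and surjectivity from Theorem~\ref{thm:mamo} applied with $S\equiv0$. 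Moreover $A^{-1}=\partial_\Omega\modud$ by convex duality, since $\modud=(\modu)^*$ (Proposition~\ref{P:proper}(vii)). Since, by Lemma~\ref{L:phj*}, $\phj^*$ satisfies Assumption~\ref{hp:alpha} as well and $(\phj^*)^*=\phj$, every property established for $A$ carries over verbatim to $\partial_\Omega\modud=A^{-1}$ upon interchanging $\phj$ and $\phj^*$; hence it suffices to deal with $A$. The recurring subtlety — and the reason none of this is entirely routine — is that the Luxemburg norm and the modular are not comparable in a scale-invariant way, so \emph{every} passage between the two topologies must be routed through the $\Delta_2$-condition.

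\emph{Boundedness and coercivity.} From the estimate $\duav{A(u),u}\le (K-1)\modu(u)$ obtained inside the proof of Lemma~\ref{lemma:1} and the Fenchel--Moreau equality one gets $\modud(A(u))=\duav{A(u),u}-\modu(u)\le C\,\modu(u)$. If $L\subset V$ is bounded, then $\modu$ is bounded on $L$ by Lemma~\ref{L:bdd-equiv} applied to $\phj$, hence so is $\modud(A(\cdot))$, hence $A(L)$ is bounded in $\Vp$ by Lemma~\ref{L:bdd-equiv} applied to $\phj^*$; this is boundedness. For coercivity, nonnegativity of $\modud$ gives $\duav{A(u),u}\ge\modu(u)$, so $\duav{A(u),u}/\|u\|_V\ge\modu(u)/\|u\|_V\to+\infty$ as $\|u\|_V\to+\infty$ by the coercivity~\eqref{bounded2} of the modular.

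\emph{Demicontinuity.} Let $u_n\to u$ in $V$. By boundedness, $(A(u_n))$ is bounded in the reflexive space $\Vp$, so some subsequence satisfies $A(u_{n_k})\rightharpoonup\xi$ in $\Vp$. For every $[v,\eta]\in G(A)$, monotonicity gives $0\le\duav{A(u_{n_k})-\eta,u_{n_k}-v}$; splitting $\duav{A(u_{n_k}),u_{n_k}-v}=\duav{A(u_{n_k}),u_{n_k}-u}+\duav{A(u_{n_k}),u-v}$, the first summand tends to $0$ (a bounded sequence tested against $u_{n_k}-u\to0$ in $V$) and the second to $\duav{\xi,u-v}$, while $\duav{\eta,u_{n_k}-v}\to\duav{\eta,u-v}$; hence $0\le\duav{\xi-\eta,u-v}$ for all $[v,\eta]\in G(A)$. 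Since $A=\partial_\Omega\modu$ is maximal monotone, $\xi=A(u)$; as the weak limit of every weakly convergent subsequence is thereby determined, the whole sequence satisfies $A(u_n)\rightharpoonup A(u)$. (An alternative route extracts, via Lemma~\ref{L:equiv}, a subsequence with $u_n\to u$ a.e.\ in $\Omega$, uses the continuity of $\alpha(x,\cdot)$ from Remark~\ref{R:al-conti} to get $A(u_n)\to A(u)$ a.e., and identifies any weak limit point with $A(u)$ by a Mazur-lemma argument.) Applying this to $\partial_\Omega\modud$ via the symmetry noted above yields the same for $A^{-1}$, and the boundedness/coercivity of $A^{-1}$ follow likewise.

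\emph{Continuity under uniform convexity.} Suppose $\modu$ is uniformly convex and $u_n\to u$ in $V$. By demicontinuity $A(u_n)\rightharpoonup A(u)$ in $\Vp$, and since $(A(u_n))$, $(u_n)$ are bounded with $u_n-u\to0$ in $V$ we obtain $\duav{A(u_n),u_n}\to\duav{A(u),u}$. Comparing with the Fenchel--Moreau equalities $\duav{A(u_n),u_n}=\modu(u_n)+\modud(A(u_n))$ and $\duav{A(u),u}=\modu(u)+\modud(A(u))$, and using that $\modu$ is lower semicontinuous (for strong convergence in $V$) and $\modud$ is convex and lower semicontinuous, hence weakly lower semicontinuous (for $A(u_n)\rightharpoonup A(u)$), a short $\liminf$/$\limsup$ bookkeeping forces both $\modu(u_n)\to\modu(u)$ and $\modud(A(u_n))\to\modud(A(u))$. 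Then, by convexity and weak lower semicontinuity, $\modud\big(\tfrac12(A(u_n)+A(u))\big)\to\modud(A(u))$, and the uniform convexity of the modular together with the $\Delta_2$-condition forces $\modud(A(u_n)-A(u))\to0$, that is $A(u_n)\to A(u)$ strongly in $\Vp$ by Lemma~\ref{L:equiv}. The only delicate point here is the last, Radon--Riesz-type step, which rests on the standard behaviour of uniformly convex (Musielak--Orlicz) modulars under $\Delta_2$--$\nabla_2$; everything else is the modular/norm translation used throughout.
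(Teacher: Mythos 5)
Your proposal is correct and reaches all the conclusions of the lemma, but the demicontinuity argument is genuinely different from the paper's. The paper argues \emph{concretely}: it converts strong $V$-convergence into $L^1$-convergence via Proposition~\ref{P:proper}(vi), extracts a pointwise a.e.\ subsequence, uses the continuity of $\alpha(x,\cdot)$ (Remark~\ref{R:al-conti}) to get $\xi_n=\alpha(\cdot,u_n)\to\alpha(\cdot,u)$ a.e., establishes boundedness of $(\xi_n)$ in $\Vp$ from the pointwise inequality $\phj^*(x,|\xi_n|)\le(K-1)\phj(x,|u_n|)$ and Lemma~\ref{L:bdd-equiv}, and then identifies the weak limit through the coincidence of pointwise and weak $L^1$ limits. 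You instead give the \emph{abstract} argument: boundedness of $A$ (which you prove first, as you must, since your demicontinuity argument relies on it) plus reflexivity gives a weakly convergent subsequence, and maximal monotonicity of $A=\partial_\Omega\modu$ together with the split $\duav{A(u_{n_k}),u_{n_k}-v}=\duav{A(u_{n_k}),u_{n_k}-u}+\duav{A(u_{n_k}),u-v}$ forces the weak limit to lie in $A(u)$. This is the standard proof that a bounded maximal monotone operator with $D(A)=V$ is demicontinuous; it trades the concrete pointwise information for a cleaner application of monotone-operator machinery and would generalize verbatim were $A$ not given in integrand form. Your boundedness proof also differs slightly: you feed the growth bound $\duav{A(u),u}\le(K-1)\modu(u)$ from \eqref{co13} into Fenchel--Moreau and invoke Lemma~\ref{L:bdd-equiv} twice, whereas the paper uses H\"older on $\duav{A(u),u}$ to get $\modud(A(u))\le C\|A(u)\|_{\Vp}$ and applies Lemma~\ref{lemma:1b}(ii) once; both land in the same place. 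For the continuity part under uniform convexity the two arguments are structurally identical — reduce to modular convergence $\modud(\xi_n)\to\modud(\xi)$ and then invoke~\cite[Lemma~2.4.17]{DHHR} (which you phrase as a uniform-convexity/Radon--Riesz step) together with Lemma~\ref{L:equiv}. The paper reaches the modular convergence a bit more directly by testing the subgradient inequality $\modud(\xi_n)\le\modud(\xi)+\duav{\xi_n-\xi,u_n}$ and taking the $\limsup$, whereas you run a $\liminf/\limsup$ balance over \emph{both} sides of Fenchel--Moreau, which works but is slightly longer. Everything else — the role of Theorem~\ref{thm:mamo} with $S\equiv0$ for surjectivity, injectivity from strict convexity, the duality $A^{-1}=\partial_\Omega\modud$, and the symmetry via Lemma~\ref{L:phj*} to transfer the conclusions to $A^{-1}$ — matches the paper.
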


\begin{proof}
We recall that $A$ is a monotone mapping from $V$ to $\Vp$. We first show that $A$ is  demicontinuous. 
To this end, let $(u_n)$ be a sequence in $V$ such that $u_n\to u$ strongly in $V$ for some $u \in V$. By virtue of (vi) of Proposition \ref{P:proper}, this implies strong convergence in $L^1$ and, up to a (not relabeled) subsequence, pointwise convergence. Fix $x \in \Omega$ so that $u_n(x) \to u(x)$ and $r \mapsto \alpha(x,r)$ is continuous. Letting $\xi_n = A(u_n)$, we have $\xi_n(x) \to \xi(x) := \alpha(x,u(x))$ by the continuity of $\alpha(x,\cdot)$. Moreover, proceeding similarly to \eqref{co11}--\eqref{co12}, we may deduce that
\begin{equation*}
0 \leq \xi_n(x) u_n(x) \le \phj(x,2 u_n(x)) - \phj(x, u_n(x))
   \le (K-1)  \phj(x,|u_n(x)|) 
\end{equation*}
for a.e.~$x \in \Omega$.
Combining this fact with the analogue of \eqref{co14},
we obtain
\begin{equation*}
  \phj^*(x,|\xi_n(x)|)
   \le (K-1) \phj(x,|u_n(x)|) \
   \mbox{ for a.e. } x \in \Omega.
\end{equation*}
Notice that, after integration, the \rhs\ is bounded by Lemma \ref{L:bdd-equiv} 
along with the boundedness of $(u_n)$ in $V$. Consequently, thanks to Lemma \ref{L:bdd-equiv} again, $(\xi_n)$ turns out to be bounded in $\Lpsostar$.
By reflexivity, there exists a subsequence $(n_k)$ of $(n)$ such that $\xi_{n_k} \to \xi$ weakly in $\Vp$. 
Here we used the coincidence of pointwise and weak limits in $L^1(\Omega)$. Moreover, due to the uniqueness of the limit, we can obtain the convergence of the whole sequence $(\xi_n)$. Hence, $A$ is demicontinuous. 

As for the case where $\modu$ is uniformly convex, since $u_n \in \partial (\modu)^*(\xi_n) = \partial \modud(\xi_n)$,
it follows that
$$
\modud(\xi_n) \leq \modud(\xi) + \langle \xi_n - \xi , u_n \rangle.
$$
 Taking the supremum limit, noting that the right-hand side converges to $\modud(\xi)$, and recalling the (weak) lower semicontinuity of $\modud$ in $V^*$, we then deduce 
$$
\modud(\xi_n) \to \modud(\xi).
$$
Thanks to~\cite[Lemma 2.4.17]{DHHR} as well as Lemma \ref{L:equiv}, we deduce that $\xi_n\to\xi$ strongly in $V^*$. Therefore $A:V\to V^*$ is continuous.

We next prove that $A:V\to V^*$ is bounded. Let $L$ be a bounded set in $L^\phj(\Omega)$ and note that
$$
\langle A(u),u \rangle = \modu(u) + \modud(A(u)) \geq \modud(A(u))
\ \mbox{ for } \ u \in L,
$$
whence follows that there exists $C > 0$ such that
$$
\modud(A(u)) \leq C \|A(u)\|_{L^{\varphi^*}(\Omega)} \ \mbox{ for } \ u \in L,
$$
which along with Lemma~\ref{lemma:1b} implies the boundedness 
of $A(L)$ in $L^{\varphi^*}(\Omega)$. Hence $A : V \to V^*$ is a bounded operator.

Finally, let us show that $A$ is coercive. This corresponds to proving that
\begin{equation*}
   \frac1{\| u \|_V} \duav{A(u), u} 
    = +\infty
    \quad \mbox{ as } \ \| u \|_V \to +\infty. 
\end{equation*}
Note that
\begin{align}\no
  \frac1{\| u \|_V} \duav{A(u), u} 
   & = \frac1{\| u \|_{V}}
           \left( 
\modu(u) + \modud(A(u))
\right)
 \label{co24}
\ge \frac{\modu(u)}{\| u \|_V}.
\end{align}
Then the coercivity of $A : V \to V^*$ follows from Lemma~\ref{lemma:1b}. Indeed, if the left-hand side of the above is bounded for $u$ lying on a set $L$, then the set $L$ turns out to be bounded in $V$ due to Lemma \ref{lemma:1b}. Hence $A$ is coercive. 

Since $\varphi^*$ fulfills all the conditions as in Assumption \ref{hp:alpha} 
for $\varphi$ (see Lemma \ref{L:phj*}), one can also assure the demicontinuity, boundedness and coercivity of $A^{-1} = \partial_\Omega \modud : V^*\to V$. 
\end{proof}
\beos\label{mamo:vec}
With obvious modifications, one can easily prove a vector-valued analogue of Theorem~\ref{thm:mamo} and Lemma \ref{L:A-conti}. Namely, if $\calS$ is a monotone operator from $\mathcal V$ to $2^{\mathcal V^*}$, then the maximality of~$\calS$ is equivalent to the surjectivity of $\calS + \lambda \calA$ for every, or some, $\lambda>0$. Moreover, $\calA:\calV \to \calV^*$ is  demicontinuous,  bounded and coercive.
\eddos

Let us give a further auxiliary lemma, which will be used to reveal the relation between the operators $\calB$ and $B$ in the next section.
\begin{lem}\label{lemma:3a}
 Let~{\rm Assumptions~\ref{hp:alpha}} hold and let $S:V \to 2^{V^*}$ be maximal monotone. Set $T:= (A + S)^{-1}$ as 
 an operator from $\Vp$ to $V$. Then $T$ is 
 demicontinuous.
\end{lem}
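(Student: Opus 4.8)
The plan is to first establish that, under Assumption~\ref{hp:alpha}, the operator $T=(A+S)^{-1}$ is a genuinely single-valued map defined on all of $\Vp$, and then to prove its demicontinuity by the usual combination of an a~priori estimate and a monotonicity (Minty-type) passage to the limit. For the first point: by Theorem~\ref{thm:mamo} (applied with $\lambda=1$) the maximality of $S$ forces $A+S=S+A$ to be surjective, so $D(T)=\Vp$; moreover $A$ is strictly monotone by the strict convexity of $\phj(x,\cdot)$ (Assumption~\ref{hp:alpha}(a)), hence $A+S$ is strictly monotone and therefore injective, so $T$ is single-valued. Finally, since $A:V\to\Vp$ is maximal monotone with $D(A)=V$ (it is demicontinuous, bounded and everywhere defined by Lemma~\ref{L:A-conti}) and $S$ is maximal monotone, the sum $A+S$ is again maximal monotone by the standard theory of monotone operators (see, e.g., \cite{barbu,BCP}).

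Now let $f_n\to f$ strongly in $\Vp$, set $u_n:=T(f_n)\in V$, and pick $\eta_n\in S(u_n)$ with $A(u_n)+\eta_n=f_n$. For the a~priori bound, fix $u_*\in D(S)$ and $\eta_*\in S(u_*)$. Monotonicity of $S$ gives $\duav{\eta_n-\eta_*,u_n-u_*}\ge0$, i.e.\ $\duav{f_n-A(u_n)-\eta_*,u_n-u_*}\ge0$, while the subgradient inequality for $A=\deo\modu$ yields $\duav{A(u_n),u_n-u_*}\ge\modu(u_n)-\modu(u_*)$. Combining these,
\[
\modu(u_n)\le\modu(u_*)+\duav{f_n-\eta_*,u_n-u_*}\le\modu(u_*)+\|f_n-\eta_*\|_{\Vp}\big(\|u_n\|_V+\|u_*\|_V\big).
\]
Since $(f_n)$ is bounded in $\Vp$ and $\modu(u_*)<+\infty$ (because $V=E^\phj(\Omega)$ under the $\Delta_2$-condition, Lemma~\ref{L:MO-chk}), this reduces to $\modu(u_n)\le C(\|u_n\|_V+1)$, so $(u_n)$ is bounded in $V$ by Lemma~\ref{lemma:1b}. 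Hence $(A(u_n))$ is bounded in $\Vp$ ($A$ being a bounded operator), and so is $(\eta_n)=(f_n-A(u_n))$.

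For the passage to the limit, reflexivity of $V$ (Proposition~\ref{P:reflexive}) lets us extract a subsequence with $u_{n_k}\debo u$ in $V$; moreover $\duav{f_{n_k},u_{n_k}}\to\duav{f,u}$ because $f_{n_k}\to f$ strongly and $(u_{n_k})$ is bounded. Since $(u_{n_k},f_{n_k})\in G(A+S)$, $A+S$ is maximal monotone, $u_{n_k}\debo u$, $f_{n_k}\to f$ and $\limsup_k\duav{f_{n_k},u_{n_k}}=\duav{f,u}$, Minty's argument gives $f\in(A+S)(u)$, i.e.\ $u=T(f)$. As this weak limit does not depend on the extracted subsequence, the standard subsequence principle yields $u_n\debo T(f)$ in $V$ for the whole sequence, which is precisely the demicontinuity of $T$.

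The only delicate step is the a~priori bound, and it is exactly there that the full strength of Assumption~\ref{hp:alpha}(c) (both the $\Delta_2$- and the $\nabla_2$-conditions) is used, through the superlinear growth of Lemma~\ref{lemma:1x} and the ensuing boundedness criterion of Lemma~\ref{lemma:1b}: without the $\nabla_2$-part this would genuinely fail, cf.\ Remark~\ref{rem:bound}. If one prefers not to invoke the sum theorem for maximal monotone operators, one may instead extract $A(u_{n_k})\debo\zeta$ and $\eta_{n_k}\debo\eta$ with $\zeta+\eta=f$, combine the weak lower semicontinuity of $\modu$ and $\modud$ with the Fenchel--Moreau identity $\duav{A(u_{n_k}),u_{n_k}}=\modu(u_{n_k})+\modud(A(u_{n_k}))$ to obtain $\liminf_k\duav{A(u_{n_k}),u_{n_k}}\ge\duav{\zeta,u}$, hence $\limsup_k\duav{\eta_{n_k},u_{n_k}}\le\duav{\eta,u}$, and then identify $\eta\in S(u)$ and $\zeta=A(u)$ by Minty's trick applied to $S$ and to $A$ separately, concluding again $u=T(f)$.
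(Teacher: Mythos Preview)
Your proof is correct. Both your argument and the paper's share the same overall architecture: well-definedness via Theorem~\ref{thm:mamo} and strict monotonicity of $A$, the a~priori bound through the subgradient inequality and Lemma~\ref{lemma:1b}, extraction of a weakly convergent subsequence by reflexivity, identification of the limit, and the subsequence principle. The a~priori estimate is obtained in essentially the same way in both proofs.

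The genuine difference lies in the identification step. You invoke the sum theorem (so that $A+S$ is maximal monotone) and apply Minty's trick directly to the sum, using that $\duav{f_{n_k},u_{n_k}}\to\duav{f,u}$ from strong convergence of $f_{n_k}$ and boundedness of $(u_{n_k})$. The paper instead exploits the concrete pointwise representation $A(w)=\alpha(\cdot,w(\cdot))$ (Lemma~\ref{lem:sing}): testing the difference of the equations for $w_n=T(g_n)$ and $w=T(g)$ by $w_n-w$ and using the monotonicity of $S$ gives $\io(\alpha(\cdot,w_n)-\alpha(\cdot,w))(w_n-w)\,\dix\to0$, whence (up to a further subsequence) $w_n\to w$ a.e.\ by the strict monotonicity of $\alpha(x,\cdot)$; weak convergence in $V$ then follows from a.e.\ convergence combined with boundedness. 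Your route is more abstract and portable---it would work verbatim for any strictly monotone $A$ satisfying the conclusions of Lemma~\ref{L:A-conti}, without ever touching the pointwise structure of $A$---while the paper's route avoids the sum theorem and in fact extracts a stronger intermediate conclusion (pointwise convergence of the $w_n$), though that extra information is not needed for the statement.
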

\begin{proof}
First of all, let us observe that $T$ is well defined. Indeed, $A+S$ is bijective due to Theorem \ref{thm:mamo} 
(here the injectivity of $A + S$ follows from the strict monotonicity of $A$).
Let then $(g_n)$ be a sequence in $V^*$ such that $g_n \to g$ strongly in~$\Vp$ 
for some $g \in V^*$ and set $w_n:= T(g_n)$ and $w:= T(g)$. This corresponds to the relations,
\begin{equation}\label{T11}
  g_n = A(w_n) + b_n, \quad
  g = A(w) + b, \quad 
  b_n \in S(w_n), \quad 
  b \in S(w).
\end{equation}
Then, we shall show that $w_n$ converges to $w$
weakly in $V$. To this aim, subtract $b_* \in S(w_*)$ for some $w_* \in D(S)$ from the first equation in \eqref{T11} and test both sides by $w_n-w_*$. Then it easily follows that
\begin{align}
 \modu(w_n) -\modu(w_*)
 &\le \duav{A(w_n),w_n-w_*} + \duav{b_n-b_*,w_n-w_*}\no\\
 &= \duav{g_n-b_*,w_n-w_*}\no\\
 &\le \left( \|g_n\|_{\Vp} + \|b_*\|_{\Vp} \right) \left( \| w_n \|_V + \|w_*\|_{\Vp} \right)\no\\
 &\le c \left( \| w_n \|_V + \|w_*\|_{\Vp} \right).\no
\end{align}
Hence, $(w_n)$ is bounded in $V$ thanks to Lemma~\ref{lemma:1b}.
By the reflexivity of $V$, $(w_n)$ admits a (not relabeled) weakly convergent
subsequence.

Let us now take the difference of the two equations in \eqref{T11}
and test it by $w_n - w$. Using the monotonicity of $S$ along with the strong convergence $g_n \to g$ in~$\Vp$,
we infer that
\begin{equation*}
  \io \big( \alpha(\cdot,w_n) - \alpha(\cdot,w) \big) (w_n-w) \, \d x
   \le  \| g_n - g \|_{V^*} \big( \| w_n \|_V + \| w \|_V \big)
   \to 0,
\end{equation*}
whence the nonnegative function 
$m_n := ( \alpha(\cdot,w_n) - \alpha(\cdot,w) ) (w_n-w) $
tends to $0$ in $L^1(\Omega)$ and, up to a subsequence,
almost everywhere. Thanks to the strict convexity
of $\phj(x,\cdot)$ for a.e.~$x \in \Omega$, this implies that 
$w_n \to w$ almost everywhere in $\Omega$. Combining this property
with the boundedness of $(w_n)$ in $V$, we can verify that
$w_n\to w$ 
weakly in $V$. 
By virtue of the uniqueness of the limit, one can easily prove that the weak convergence holds for the whole sequence $(w_n)$.
\end{proof}

\subsection{Characterization of the operator $\calB$}

This subsection provides a characterization of the operator $\calB$ as a representation of $B : V \to 2^{V^*}$ in the (space-time) Musielak-Orlicz space $\calV = L^\phj(Q)$. To be more precise, we shall prove that 
\begin{center}
$b \in \calB(u)$ is equivalent to $b(t) \in B(u(t))$ for a.e.~$t \in (0,T)$  
\end{center}
for $u \in \calV$ and $b \in \calV^*$.

\begin{lem}[Relation between $\calB$ and $B$]\label{lemma:3}
 Let {\rm Assumptions~\ref{hp:alpha}} and {\rm \ref{hp:B}} hold. 
 Let $\Bext : \calV \to \calVp$ be given by, for $u \in \mathcal V$ and $\xi \in \calV^*$,
 \begin{equation}\label{Bext}
   \xi \in \Bext(u) 
\ \stackrel{\text{{\rm define}}}{\Longleftrightarrow} \ 
    \xi(\cdot,t) \in B(u(\cdot,t))
    \quext{for a.e.~}\,t\in(0,T),
 \end{equation}
 where the domain of $\Bext$ is given by
 the set of functions $u\in \calV$ satisfying that there
 exists {\rm (}at least\,{\rm )} one $\xi\in \calVp$ for which\/ 
 \eqref{Bext} holds. 
 Then the operators $\calB=\partial_Q\Fhi$ and $\Bext$ do coincide.
\end{lem}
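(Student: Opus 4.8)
The plan is to prove the two inclusions $\Bext \subset \calB$ and $\calB \subset \Bext$ separately. For the inclusion $\Bext \subset \calB$, suppose $\xi \in \Bext(u)$, i.e.\ $\xi(\cdot,t) \in B(u(\cdot,t))$ for a.e.\ $t\in(0,T)$. By definition of $B = \partial_\Omega\fhi$, for a.e.\ $t$ one has $\fhi(v(t)) \ge \fhi(u(t)) + \duav{\xi(t), v(t)-u(t)}$ for every $v(t)\in V$. Taking an arbitrary $v\in\calV$, integrating this inequality in $t$ over $(0,T)$, and using the definition $\Fhi(w) = \int_0^T\fhi(w(t))\,\dit$ together with $\duaV{\xi, v-u} = \int_0^T \duav{\xi(t), v(t)-u(t)}\,\dit$, one obtains $\Fhi(v) \ge \Fhi(u) + \duaV{\xi, v-u}$, i.e.\ $\xi\in\partial_Q\Fhi(u) = \calB(u)$. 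One should check that $t\mapsto \duav{\xi(t),u(t)}$ and $t\mapsto\fhi(u(t))$ are integrable so that the manipulations make sense; finiteness of $\Fhi(u)$ (which holds since $u\in D(\calB)\subset D(\Fhi)$ must be verified, or rather, membership in the domain of $\Bext$ forces $\fhi(u(t))<+\infty$ a.e., and an argument as in Lemma~\ref{lemma:Phisc} gives measurability; one also uses the H\"older inequality \eqref{hoelder} in $Q$ to control $\duav{\xi(t),u(t)}$).

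For the reverse inclusion $\calB\subset\Bext$, which is the substantive direction, I would argue by a resolvent/surjectivity comparison rather than directly. Let $u\in\calV$ and $\xi\in\calB(u) = \partial_Q\Fhi(u)$. The idea is: for fixed $\lambda>0$, form $g := \lambda\calA(u) + \xi \in \calVp$ (well defined since $\calA$ is everywhere defined on $\calV$ by Lemma~\ref{lem:sing}, and $\xi\in\calVp$). On the other hand, the ``pointwise'' operator $\Bext$ is monotone, and I claim it is maximal monotone on $\calV$: indeed, for a.e.\ $t$, $\lambda A + B$ is surjective from $V$ to $\Vp$ by Theorem~\ref{thm:mamo} applied with $S = B$; solving the stationary problem $\lambda A(w(t)) + b(t) = g(t)$, $b(t)\in B(w(t))$ pointwise in $t$ produces, after a measurable-selection argument, a function $w\in\calV$ with $\lambda\calA(w) + \zeta = g$ for some $\zeta\in\Bext(w)$. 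Here one needs the resolvent estimate to ensure $w\in\calV$ (not merely measurable with values in $V$): testing $\lambda A(w(t)) + b(t) = g(t)$ by $w(t)$, using $\duav{b(t),w(t)}\ge \duav{b_*, w(t)}$ for a fixed $b_*\in B(u_*)$ with $u_*\in D(\fhi)$, and integrating gives $\lambda\Modu(w) \le C(1 + \norg{w}{\calV})$, whence $w\in\calV$ by Lemma~\ref{lemma:1b} applied on $Q$; measurability of $t\mapsto w(t)$ follows from the demicontinuity of the resolvent $T=(\lambda A + B)^{-1}$ established in Lemma~\ref{lemma:3a}, since $g$ is (strongly) measurable. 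Then $\zeta\in\Bext(w)\subset\calB(w)$ by the first inclusion, so $\lambda\calA(w)+\zeta = g = \lambda\calA(u)+\xi$ with both $\zeta\in\calB(w)$ and $\xi\in\calB(u)$; monotonicity of $\calB$ together with strict monotonicity of $\calA$ (from strict convexity of $\phj(x,\cdot)$) forces $w=u$, hence $\xi = \zeta\in\Bext(u)$, completing the proof.

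The main obstacle, and the step deserving the most care, is the measurable-selection / measurability issue in the reverse inclusion: namely, showing that the pointwise-in-$t$ solution $w(t) = (\lambda A + B)^{-1}(g(t))$ of the stationary inclusion defines a genuinely $\calV$-valued (strongly measurable) function, and that one may choose the section $\zeta(t)\in B(w(t))$ measurably in $t$ with $\zeta\in\calVp$. The key tools here are the demicontinuity of the resolvent $(\lambda A + B)^{-1}:\Vp\to V$ (Lemma~\ref{lemma:3a}), which upgrades pointwise convergence of $g$ through simple functions to weak measurability of $w$ hence strong measurability (Pettis), and the boundedness criterion of Lemma~\ref{lemma:1b} on $Q$ to convert the energy bound into $w\in\calV$ and $\zeta = g - \lambda\calA(w)\in\calVp$. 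Once these integrability and measurability facts are in place, the monotone-operator comparison argument is short.
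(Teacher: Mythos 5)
Your proposal follows essentially the same route as the paper's: the easy inclusion $\Bext\subset\calB$ by integrating the subdifferential inequality, and the converse via the modular-based maximality criterion (Theorem~\ref{thm:mamo}/Remark~\ref{mamo:vec}), with measurability of $t\mapsto T(g(t))$ obtained from the demicontinuity of $T=(A+B)^{-1}$ (Lemma~\ref{lemma:3a}) plus Pettis' theorem (the paper approximates $g$ by $C^0([0,T];V^*)$ functions rather than simple functions, but this is cosmetic), and the modular estimate to conclude $w\in\calV$. Two small slips worth correcting: the monotonicity step should read $\duav{b(t),w(t)-u_*}\ge\duav{b_*,w(t)-u_*}$ (test by $w(t)-u_*$, not $w(t)$), and your a priori bound ``$\lambda\Modu(w)\le C(1+\|w\|_{\calV})$, whence $w\in\calV$ by Lemma~\ref{lemma:1b}'' is circular, since $\|w\|_{\calV}$ presupposes $w\in\calV$. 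The paper instead applies $\vep$-Young's inequality (Lemma~\ref{L:young}) pointwise in $t$ to get $\modu(w(t))\le C(\modud(g(t))+\modud(b_*)+\modu(u_*))$, integrates to obtain $\modu(w(\cdot))\in L^1(0,T)$, then uses \eqref{norm<modu} and Fubini to conclude $w\in L^1(0,T;V)$, Lebesgue measurability on $Q$, and finally $\Modu(w)<+\infty$, i.e.\ $w\in\calV$.
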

\begin{proof}
First of all, due to the definition of subdifferential, 
for any $u\in D(\Bext)$, $\xi \in \Bext(u)$ and $v\in \calV$,
the following holds:
\begin{equation*}
  \duav{ \xi(t) , v(t) - u(t) }  
    + \fhi(u(t))
   \le \fhi(v(t))   
   \quext{for a.e.~}\,t\in(0,T).
\end{equation*}
Then, integrating it over $(0,T)$, one readily obtains $\Bext \subset \calB$.

To get the converse inclusion, which is 
more delicate, we shall prove that $\Bext$ is
a maximal monotone operator from $\calV$ to $2^{\calVp}$.
To this aim, in view of Remark \ref{mamo:vec} along with Lemma \ref{lem:sing},
it is sufficient to show that, for every
$g\in \calVp$, there exists $u\in D(\Bext)$
such that
\begin{equation}\label{Bext11}
  A(u(t)) + B(u(t)) \ni g(t) \ \mbox{ in } V^* 
  \quext{for a.e.~}\,t\in(0,T).
\end{equation}
Noting that $\calV^* \subset L^1(0,T;V^*)$ (see Lemma \ref{lem:emb1} below), one can take a sequence $(g_n)$ 
in $C^0([0,T];V^*)$ such that $g_n \to g$ strongly in $L^1(0,T;V^*)$ and $g_n(t)\to g(t)$ strongly in $V^*$ for a.e.~$t \in (0,T)$. Set $T := (A+B)^{-1}$ (see Lemma \ref{lemma:3a}). Then, for {\sl every}\/ $t\in[0,T]$, since $g_n(t)$ lies in $\Vp$, we can define $u_n(t) := T(g_n(t)) \in D(B)$ as the unique function satisfying
\begin{equation*}
A(u_n(t)) + b_n(t) = g_n(t),
\quad b_n(t) \in B(u_n(t)) \ \mbox{ in } V^*.
\end{equation*}
Thanks to the demicontinuity of $T$ proved in Lemma~\ref{lemma:3a}, we then deduce that 
$u_n \in C_w([0,T];V)\subset \calV$ for all $n\in\NN$. In particular, $u_n : (0,T) \to V$
is strongly measurable. Moreover, recalling $g_n(t)\to g(t)$ strongly in $V^*$ and using 
Lemma \ref{lemma:3a}, we infer that $u_n(t) \to u(t) := T(g(t))$ weakly in $V$ as $n \to +\infty$. 
Hence by Pettis' theorem, $u:(0,T) \to V$ turns out to be strongly measurable. Moreover, $u(t)$ satisfies \eqref{Bext11}. It remains to check $u \in \calV$.
Let $u_* \in D(B)$ and subtract $b_* \in B(u_*)$ from both sides of \eqref{Bext11}. 
Test it by $u(t)-u_*$ and apply Young's inequality (see Lemma \ref{L:young}). 
With the aid of the monotonicity of $B$, it then follows that
\begin{align}\label{int11}
\modu(u(t)) &\leq \modu(u_*) + \langle g(t) - b_* , u(t) - u_* \rangle\\
&\leq \frac 1 2 \modu(u(t)) + C \left( \modud(g(t))  + \modud(b_*)  + \modu(u_*) \right) + \modu(u_*),
\end{align}
which along with $g \in \calV^*$ (i.e., $\modud(g(\cdot)) \in L^1(0,T)$) implies $\modu(u(\cdot)) \in L^1(0,T)$.
Furthermore, recalling \eqref{norm<modu}, one deduces that $u \in L^1(0,T;V) \subset L^1(0,T;L^1(\Omega))$, 
which also ensures that $u = u(x,t)$ is (Lebesgue) measurable in $\Omega\times(0,T)$ 
(see~\cite[Proposition A.3]{AS} for more details). Therefore integrating \eqref{int11} over $(0,T)$ and using Fubini's lemma, we obtain $u \in \calV$ from the fact that $\modu(u(\cdot)) \in L^1(0,T)$.
\end{proof}
\beos\label{nomeas}
 Apparently, a simpler proof of \eqref{Bext11} could be provided just by fixing 
 $t\in(0,T)$ and noting that, thanks to the maximality of $B$ as an operator from $V$ to $2^{\Vp}$, by Theorem~\ref{thm:mamo}, there exists a function $u(t)$ satisfying \eqref{Bext11}. 
 However, in such a way we may not guarantee that $t \mapsto u(t)$ is strongly 
 measurable in $L^1(\Omega)$ over $(0,T)$ (see also~\cite[Appendix]{AS}). 
 This is the reason why in the above proof we needed to proceed by approximation for $g \in \calV^*$.
\eddos

\section{Embeddings and compactness results}
\label{subsec:embe}

This section presents some embedding and compactness results useful for the sequel. 
To this end, we start with recalling the so-called {\sl unit ball property}\/ 
(see~\cite[Lemma~2.1.14]{DHHR}), namely for $u\in V$ the following equivalence holds:
\begin{equation}\label{unit:ball}
  \| u \|_V \le 1 \quad \mbox{ if and only if } \quad \modu(u) \le 1. 
\end{equation}
Of course, a similar property holds in $\calV$. Then we have
\begin{lem}[Basic embeddings of Musielak-Orlicz spaces]\label{lem:emb1}
 Let\/ {\rm Assumption~\ref{hp:alpha}} hold. Then, 
 $L^\infty(0,T;V) \hookrightarrow \calV$ and $\calV \hookrightarrow L^1(0,T;V)$ with continuous injections.
\end{lem}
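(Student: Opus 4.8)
The plan is to prove each of the two embeddings by a direct estimate on the modular, the only delicate point being the measurability passages between functions on $Q$ and vector‑valued functions on $(0,T)$. First, for $L^\infty(0,T;V)\hookrightarrow\calV$, I would record that $V=\Lpso\hookrightarrow L^1(\Omega)$ continuously, which follows from the H\"older inequality \eqref{hoelder} together with $\chi_\Omega\in\Lpsostar$ (valid since $|\Omega|<+\infty$ and $\phj^*$ is locally integrable, see Lemmas~\ref{L:phj*} and \ref{L:MO-chk}). Given $u\in L^\infty(0,T;V)$ with $M:=\|u\|_{L^\infty(0,T;V)}>0$ (the case $M=0$ being trivial), the inclusion $V\hookrightarrow L^1(\Omega)$ makes $t\mapsto u(t)$ a strongly measurable $L^1(\Omega)$‑valued map, whence (cf.~\cite[Appendix]{AS}) $u=u(x,t)$ is Lebesgue measurable on $Q$. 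From $\|u(t)\|_V\le M$ a.e.\ and the unit ball property \eqref{unit:ball} applied to $u(t)/M$ we get $\modu(u(t)/M)\le 1$ for a.e.~$t$; integrating over $(0,T)$ and using Fubini--Tonelli gives $\Modu(u/M)\le T$. Finally, setting $\lam:=\max\{1,T\}\ge 1$ and using convexity of $\phj(x,\cdot)$ with $\phj(x,0)=0$ one obtains $\Modu\big(u/(\lam M)\big)\le\lam^{-1}\Modu(u/M)\le T/\lam\le 1$, so that $u\in\LpsQ=\calV$ with $\|u\|_\calV\le\max\{1,T\}\,\|u\|_{L^\infty(0,T;V)}$.

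For the second embedding $\calV\hookrightarrow L^1(0,T;V)$, I would again start from $\calV=\LpsQ\hookrightarrow L^1(Q)$, which follows from \eqref{hoelder} and $\chi_Q\in\LpsQstar$ (here $\phj$, hence $\phj^*$, is proper and locally integrable on $Q$ by (viii) of Proposition~\ref{P:proper}). Thus, for $u\in\calV$, the map $t\mapsto u(t)$ is strongly measurable into $L^1(\Omega)$; since $V$ is separable and, for every $\xi\in V^*\simeq\Lpsostar$, the map $t\mapsto\duav{\xi,u(t)}=\io\xi(x)u(x,t)\,\dix$ is measurable (Fubini, as $\xi u\in L^1(Q)$ by \eqref{hoelder}), Pettis' theorem yields that $t\mapsto u(t)$ is strongly measurable into $V$, with $u(t)\in V$ for a.e.~$t$. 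For the norm bound, fix $\eps>0$; by the definition of the Luxemburg norm together with Proposition~\ref{P:nondec}, $\Modu\big(u/(\|u\|_\calV+\eps)\big)\le 1$, hence $\int_0^T\modu\big(u(t)/(\|u\|_\calV+\eps)\big)\,\dit\le 1$ by Fubini--Tonelli. Combining this with \eqref{norm<modu}, i.e.\ $\|w\|_{\modu}\le\modu(w)+1$, applied to $w=u(t)/(\|u\|_\calV+\eps)$, and integrating over $(0,T)$, we arrive at $\int_0^T\|u(t)\|_V\,\dit\le(\|u\|_\calV+\eps)(1+T)$; letting $\eps\to 0_+$ gives $\|u\|_{L^1(0,T;V)}\le(1+T)\|u\|_\calV$.

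The genuinely routine part is thus the two modular inequalities; the step I expect to require the most care is verifying that a $Q$‑measurable function $u\in\calV$ does define a strongly measurable $V$‑valued function on $(0,T)$. This is where separability of $V$ and the identification $V^*\simeq\Lpsostar$ (Proposition~\ref{P:proper}) enter, through Pettis' theorem, rather than any $\Delta_2$‑type structure; everything else is a convexity argument plus Fubini--Tonelli.
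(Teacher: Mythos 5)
Your proof is correct and follows essentially the paper's own route: slice-wise modular estimates combined with Fubini--Tonelli and convexity, using the unit-ball property \eqref{unit:ball} for the first embedding and the norm--modular inequality \eqref{norm<modu} for the second. Two points where your write-up is somewhat sharper than the paper's: you make the transfer from $Q$-measurability to strong $V$-valued measurability explicit via Pettis' theorem rather than just referencing \cite[Appendix]{AS}, and your normalization by $\|u\|_\calV+\eps$ before applying \eqref{norm<modu} directly produces the operator bound $\|u\|_{L^1(0,T;V)}\le(1+T)\|u\|_\calV$, whereas the paper's estimate $\int_0^T\|u(t)\|_V\,\dit\le\Modu(u)+T$ establishes membership in $L^1(0,T;V)$ but leaves the continuity constant implicit (it then follows from linearity together with, say, Lemma~\ref{L:bdd-equiv} or the closed graph theorem). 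Your unified choice $\lam=\max\{1,T\}$ also treats $T\ge1$ and $T<1$ in one step, the latter being left to the reader in the paper.
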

\begin{proof}
We shall prove the first assertion in the case $T\ge 1$, which is slightly more difficult 
than the other case $T\in(0,1)$. Let $u\in L^\infty(0,T;V)$ and choose $\lambda > \| u \|_{L^\infty(0,T;V)}$. 
Note that $u \in L^\infty(0,T;V) \hookrightarrow L^1(0,T;L^1(\Omega))$ can be identified with a 
Lebesgue integrable function (still denoted by $u$) in $Q = \Omega \times (0,T)$ 
(see~\cite[Appendix]{AS}). Moreover, we have
\begin{equation*}
  \modu\left(\frac{u(t)}{\lambda}\right) \le 1 \ \mbox{ for a.e. } t\in (0,T),
\end{equation*}
whence, by Fubini's lemma and convexity,
\begin{equation*}
  \Modu \left(\frac{u}{\lambda T}\right) 
   = \iTT \modu\left(\frac{u(t)}{\lambda T}\right)\,\dit
   \le \frac1T \iTT \modu\left(\frac{u(t)}{\lambda}\right)\,\dit
   \le 1,
\end{equation*}
and consequently, we get $u\in \calV$ and $\| u \|_{\calV} \le \lambda T$. In particular, 
we have $\| u \|_{\calV} \le T\| u \|_{L^\infty(0,T;V)}$, whence follows the first assertion. 
The second assertion follows immediately from \eqref{norm<modu}, i.e., 
integrating both sides of \eqref{norm<modu} with $u=u(t)$ over $(0,T)$, we find that
$$
\int^T_0 \|u(t)\|_V \, \d t \leq \int^T_0 \modu(u(t)) \, \d t + T \quad \mbox{ for } \ u \in \calV.
$$
Here we also used the fact that, for $u \in \calV \subset L^1(Q)$, 
$u(t) := u(\cdot,t)$ is strongly measurable with values in $V$ in $(0,T)$ (see~\cite[Appendix]{AS}).
This completes the proof.
\end{proof}

We can now define the subspace of $\calV$, 
\begin{equation*}
\calV_1:=\left\{ v \in W^{1,1}(0,T;V) \colon v_t \in \calV \right\},
\end{equation*}
which is naturally endowed with the graph norm 
$$
\|v\|_{\calV_1} := \|v\|_{\calV} + \|v_t\|_{\calV}
\quad \mbox{ for } \ v \in \calV_1.
$$
It is immediate to check that $\calV_1$ is a (reflexive) Banach space. Indeed, it is a closed subspace of $\calV\times \calV$. Note that $\calV_1$ is not properly a Sobolev space; nevertheless, based on the above lemma we have
\begin{equation*}
  \calV_1 \hookrightarrow W^{1,1}(0,T;V) \hookrightarrow C^0([0,T];V)
\end{equation*}
and both embeddings are continuous (see Lemma \ref{lem:emb1}).

We conclude this section by presenting a generalized version of the Aubin-Lions compactness lemma, which may have an independent interest as well.
\begin{teor}[Aubin-Lions lemma for Musielak-Orlicz spaces]\label{thm:AL}
 Let\/ {\rm Assumption~\ref{hp:alpha}}\/ hold and let $X$, $Y$ 
 be Banach spaces such that $X \hookrightarrow Y$ compactly and $Y \hookrightarrow V = L^\phj(\Omega)$ continuously. Let $L$ be a bounded subset of $L^\infty(0,T;X)$ and assume that the set $L_t:=\{f_t \colon f\in L\}$ is bounded in $\calV = L^\phj(Q)$. Then $L$ is relatively compact in $C^0([0,T];Y)$.
\end{teor}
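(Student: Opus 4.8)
The plan is to verify the hypotheses of the Arzelà--Ascoli theorem in $C^0([0,T];Y)$: that $L\subset C^0([0,T];Y)$, that $L$ is equicontinuous, and that $\{f(t)\colon f\in L\}$ is relatively compact in $Y$ for each $t\in[0,T]$; since $Y$ is a Banach space this will give relative compactness of $L$ in $C^0([0,T];Y)$. First I would note that, since $L_t$ is bounded in $\calV$, each $f\in L$ has a weak time-derivative $f_t\in\calV$, and together with $f\in L^\infty(0,T;X)\subset L^\infty(0,T;V)\subset L^1(0,T;V)$ and Lemma~\ref{lem:emb1} this gives $f\in\calV_1\hookrightarrow C^0([0,T];V)$, so that $f(t)-f(s)=\int_s^t f_t(\tau)\,\d\tau$ in $V$ for $0\le s\le t\le T$.

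\textbf{Sections and an ambient compact set.} Set $M:=\sup_{f\in L}\|f\|_{L^\infty(0,T;X)}<+\infty$. Since $X\hookrightarrow Y$ is compact, the ball $\overline{B_X(0,M)}$ is relatively compact in $Y$; let $K$ be its closure in $Y$, a compact subset of $Y$. For every $f\in L$ one has $f(t)\in K$ for a.e.\ $t$; since $f$ is $V$-continuous, $K$ is compact in $Y$, and $Y\hookrightarrow V$ is continuous and injective, a density argument (approximating a bad $t_0$ by good nearby $t_n$, extracting a $Y$-convergent subsequence of $f(t_n)\in K$, and matching limits in $V$) extends this to $f(t)\in K$ for \emph{every} $t\in[0,T]$. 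Hence $\{f(t)\colon f\in L\}\subset K$ is relatively compact in $Y$ for each $t$. Moreover, on the compact set $K$ the metric topologies induced by $Y$ and by $V$ coincide (a continuous injection from a compact space into a Hausdorff space is a homeomorphism onto its image); consequently each $f$, being $V$-continuous with values in $K$, is $Y$-continuous, i.e.\ $L\subset C^0([0,T];Y)$.

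\textbf{Equicontinuity (the main obstacle).} The delicate point is uniform equicontinuity, since $f_t$ is only bounded in $\calV=\LpsQ$ (not in any $L^r(0,T;V)$ with $r>1$), so one cannot simply invoke absolute continuity of a fixed integral in $\int_s^t\|f_t(\tau)\|_V\,\d\tau$. The resolution is to use the \emph{quantitative} superlinearity of $\phj$. By Jensen's inequality for the convex map $\phj(x,\cdot)$ and the averaged integral, with $g:=f(t)-f(s)$,
\[
  \modu\!\left(\frac{g}{t-s}\right)\le\frac1{t-s}\int_s^t\modu\big(f_t(\tau)\big)\,\d\tau\le\frac{\Modu(f_t)}{t-s}\le\frac{C_0}{t-s},
\]
where $C_0:=\sup_{f\in L}\Modu(f_t)<+\infty$ by Lemma~\ref{L:bdd-equiv} (applied on $Q$). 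Iterating the lower bound $\phj(x,2r)\ge K_0\phj(x,r)$ with $K_0>2$ from Lemma~\ref{lemma:1x} (see \eqref{jlow}) gives $\phj(x,2^{-k}r)\le K_0^{-k}\phj(x,r)$, so if $2^{-(k+1)}<t-s\le2^{-k}$ then, by monotonicity of $\phj(x,\cdot)$,
\[
  \modu(g)\le K_0^{-k}\,\modu\!\left(\frac{g}{t-s}\right)\le K_0^{-k}\,\frac{C_0}{t-s}\le 2C_0\left(\frac{K_0}{2}\right)^{-k}\xrightarrow[k\to\infty]{}0 .
\]
Thus $\sup_{f\in L}\modu\big(f(t)-f(s)\big)\to0$ as $|t-s|\to0$, and by Lemma~\ref{L:equiv} this forces $\sup_{f\in L}\|f(t)-f(s)\|_V\to0$ as $|t-s|\to0$; that is, $L$ is uniformly equicontinuous with values in $V$.

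\textbf{Conclusion.} Since the $Y$- and $V$-topologies on $K$ coincide, the identity $(K,\|\cdot\|_V)\to(K,\|\cdot\|_Y)$ is uniformly continuous, so for every $\eps>0$ there is $\delta>0$ with $a,b\in K$, $\|a-b\|_V<\delta\Rightarrow\|a-b\|_Y<\eps$; combined with the $V$-equicontinuity just proved this yields equicontinuity of $L$ in $C^0([0,T];Y)$. Together with relative compactness of the sections in $Y$, the Arzelà--Ascoli theorem then gives that $L$ is relatively compact in $C^0([0,T];Y)$. The heart of the argument, and the only genuinely new step, is the equicontinuity estimate, where the mere $L^\phj(Q)$-bound on $f_t$ is upgraded to a uniform modulus of continuity by exploiting the strengthened $\Delta_2$-inequality \eqref{jlow}.
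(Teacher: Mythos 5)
Your proof is correct, and the heart of it---the equicontinuity estimate via Jensen's inequality for the modular together with the strengthened $\Delta_2$-inequality \eqref{jlow} (i.e.\ $K_0>2$)---is exactly the same mechanism the paper uses; your bookkeeping through Lemma~\ref{L:bdd-equiv} (boundedness of $\Modu(f_t)$) and Lemma~\ref{L:equiv} (from $\modu(f(t)-f(s))\to0$ to $\|f(t)-f(s)\|_V\to0$, uniformly in $L$) is a cleaner packaging of the paper's explicit unit-ball and $\Delta_2$ manipulations. Where you genuinely diverge is in the final upgrade from $V$-equicontinuity to $C^0([0,T];Y)$-compactness: the paper first invokes Ascoli (in the Simon form) to get precompactness in $C^0([0,T];V)$ and then passes to $Y$ by Ehrling's interpolation inequality, which requires only the continuous chain $X\hookrightarrow Y\hookrightarrow V$ and the uniform $L^\infty(0,T;X)$ bound. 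You instead observe that all trajectories live in a single compact set $K\subset Y$ (obtained from the $L^\infty(0,T;X)$ bound and the compactness of $X\hookrightarrow Y$, then upgraded from a.e.\ to every $t$ by a density/closedness argument) and that on $K$ the $V$- and $Y$-metric topologies coincide, hence $V$-equicontinuity becomes $Y$-equicontinuity; one then applies the standard Arzel\`a--Ascoli theorem directly in $C^0([0,T];Y)$. Both routes are correct; Ehrling's lemma avoids the slightly delicate ``$f(t)\in K$ for \emph{every} $t$'' step, while your topological argument avoids invoking interpolation and makes the use of compactness of $X\hookrightarrow Y$ more transparent.
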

\begin{rem}[Comparison with the standard Aubin-Lions-Simon lemma]
According to~\cite[Corollary 4 in \S 8]{Si}, under the same setting of $X$, $Y$ and $V$ as above, the following holds: Let $L$ be a bounded subset of $L^p(0,T;X)$ such that the set $\{u_t \colon u \in L\}$ is bounded in $L^r(0,T;V)$. Then $L$ is precompact in $L^p(0,T;Y)$ if $1 \leq p < \infty$ and $r=1$; in $C^0([0,T];Y)$ if $p=\infty$ and $r>1$. Moreover, this result is optimal; in particular, even if $L$ is bounded in $C^0([0,T];X)$ 
and the set $\{u_t \colon u \in L\}$ is bounded in $L^1(0,T;V)$, then 
$L$ may not be precompact in $L^\infty(0,T;Y)$ (see~\cite[Proposition 4]{Si}).
Recall that $\calV$ is always (continuously) embedded in $L^1(0,T;V)$ (see Lemma \ref{lem:emb1}) 
but it may not be included in $L^r(0,T;V)$ for any $r > 1$. Hence the compact 
embeddings established in~\cite{Si}  do not imply  the assertion of Theorem \ref{thm:AL} directly.
\end{rem}
\begin{proof}
By assumption, there exists $C>0$ such that
\begin{equation*}
  \| u_t \|_{\calV} + \| u \|_{L^\infty(0,T;X)} \le C
   \quad \mbox{ for all } \ u \in L.
\end{equation*}
We can assume that 
$$
\sup_{t \in [0,T]}\|u(t)\|_X \le C \quad \mbox{ for all } \ u \in L
$$ 
without loss of generality.
Let $u$ be any element of $L$ such that $u_t \not\equiv 0$. Let $t\in[0,T)$ and let $h\in(0,1/2)$ be so small that $t+h\in(0,T]$. Choose $n\in\NN$ such that $h\in(2^{-(n+1)},2^{-n}]$.
Then, it follows from \eqref{jlow} that
\begin{equation*}
  \phj\big( x, u(x,t + h) - u(x,t) \big)
    \le K_0^{-n} \phj\left( x,\frac{u(x,t + h) - u(x,t)}{2^{-n}} \right)
\end{equation*}
for a.e.~$x\in \Omega$. Then, recalling that 
$\phj$ is even in the second variable, we deduce that
\begin{align}
\lefteqn{
 \phj\left( x, u(x,t + h) - u(x,t) \right)
    \le K_0^{-n} \phj\left(x, 2^n\textstyle\int_t^{t+h} u_t(x,s) \,\dis \right)
}\no\\
& = K_0^{-n} \phj\left(x,  2^n \left| \textstyle\int_t^{t+h} u_t(x,s) \,\dis\right| \right)\no\\
    & \le K_0^{-n} \phj\left(x, 2^n \textstyle\int_t^{t+h} |u_t(x,s)| \,\dis \right)
   \le K_0^{-n} \phj\left(x, 2^n  \textstyle\int_t^{t+2^{-n}} |u_t(x,s)| \,\dis \right).\no
\end{align}
Integrating both sides over $\Omega$ and subsequently applying Jensen's inequality together with \eqref{K2}, we infer that, 
for  $h$ small enough and correspondingly  $n \in \N$ large enough,
\begin{align}
\lefteqn{
 \modu(u(t + h) - u(t))
     \le K_0^{-n} \io \phj\left(x, 2^n  \textstyle\int_t^{t+2^{-n}} |u_t(x,s)| \,\dis \right) \,\dix 
 \no}\\
   & \le K_0^{-n} \io \left( 2^n \textstyle\int_t^{t+2^{-n}} \phj(x,|u_t(x,s)|) \,\dis \right)\,\dix \no\\
 &     = \left( \frac{K_0}2\right)^{-n} \int_t^{t+2^{-n}} \io \phj(x,|u_t(x,s)|) \,\dix \dis \no\\
 \no
   &
     = \left( \frac{K_0}2\right)^{-n} \int_t^{t+2^{-n}} \modu(u_t(s)) \,\dis
 \no
   \le \left( \frac{K_0}2\right)^{-n} \int_0^T \modu(u_t(s)) \,\dis\\
   & = \left( \frac{K_0}2\right)^{-n} \Modu(u_t)
    \le \left( \frac{K_0}2 \right)^{1+\log_2 h} \Modu\left(\| u_t\|_{\calV}\frac{u_t}{\| u_t\|_{\calV}}\right) \no  \\
 \no
   & \le \left( \frac{K_0}2 \right)^{1+\log_2 h} \left( K^{1 + \log_2\|u_t\|_{\mathcal V}} 
           \vee 1 \right) \Modu\left(\frac{u_t}{\| u_t\|_{\calV}}\right) \no\\
   & \le \left( \frac{K_0}2 \right)^{1+\log_2 h} \left( K^{1 + \log_2 C} \no
\vee 1 \right),
\end{align}
which in particular implies that, for $h > 0$ small enough, the \lhs\ is less than or equal to one, and it follows from \eqref{unit:ball} that $\| u(t+h) - u(t) \|_V \leq 1$. Consequently, repeating a similar argument along with \eqref{K2}, we can deduce that
\begin{align}\no
  \lefteqn{
 1 = \modu\left( \frac{u(t + h) - u(t)}{\| u(t+h) - u(t) \|_V} \right)
 }\\
 & \le K^{1-\log_2 \|u(t+h)-u(t)\|_V} 
\modu(u(t + h) - u(t)) \no\\
 \no 
  & \le  K^{1-\log_2 \|u(t+h)-u(t)\|_V} 
  \left( \frac{K_0}2\right)^{1+\log_2 h} \left[ K^{1+\log_2C} 
\vee 1 \right],
\end{align}
which implies that
\begin{equation}\label{Aubin:15}
  \lim_{h\to 0_+} \| u(t+h) - u(t) \|_V = 0
   \quext{uniformly in }\,L.
\end{equation}
Consequently, we have checked the assumptions of Ascoli's lemma (see, e.g.,~\cite[Lemma 1]{Si}), 
whence $L$ turns out to be precompact in $C^0([0,T];V)$.
The compactness of $L$ in $C^0([0,T];Y)$ is then deduced by noting
that, thanks to Ehrling's lemma, \eqref{Aubin:15} can easily be improved to
\begin{equation*}
  \lim_{h\to 0} \| u(t+h) - u(t) \|_Y = 0
   \quext{uniformly in }\,L.
\end{equation*}
Indeed, for any $\epsi>0$ there exists $C_\epsi>0$ such that
\begin{align*}
  \| u(t+h) - u(t) \|_Y 
    &\le \epsi \| u(t+h) - u(t) \|_X + C_\epsi  \| u(t+h) - u(t) \|_V\no\\
    &\le 2 C \epsi + C_\epsi  \| u(t+h) - u(t) \|_V.
\end{align*}
This completes the proof.
\end{proof}

\section{Chain-rule formula}
\label{subsec:chain}

The chain-rule formula is one of the advantages of formulating equations with subdifferential 
operators and indeed plays a crucial role to establish energy estimates as well as to 
identify weak limits of nonlinear terms (see \S \ref{sec:proof} below). In this section, 
we shall prove a chain-rule formula in the spirit of ~\cite[Lemme~3.3, p.~73]{Br}, which extends,
to the Musielak-Orlicz space setting, our former result~\cite[Prop.~4.1]{AS} dealing
with $L^{p(x)}$-spaces (cf.~Lemma \ref{L:chain-0} below for the standard version of the formula).
\begin{thm}[Chain-rule in Musielak-Orlicz spaces]\label{teo:chain}
 Let\/ {\rm Assumptions~\ref{hp:alpha}} and\/ {\rm \ref{hp:B}} hold. 
 Let $u\in \calV_1$ and $ \eta  \in \calVp$ be such that $u \in D(\calB)$ and  
 $ \eta  \in \calB(u)$. Then the function  $t\mapsto \fhi(u(t))$ is absolutely continuous 
 on $[0,T]$ and the following chain-rule formula holds\/{\rm :}
 \begin{equation}\label{chain}
   \int_s^t \duav{  \eta(\tau) ,  \partial_t u(\tau) }\,\d \tau = \fhi(u(t)) - \fhi(u(s))
    \ \mbox{ for all } \  0\le s \le t \le T.
 \end{equation}
\end{thm}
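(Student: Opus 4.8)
The plan is to reduce the Musielak-Orlicz chain-rule to a more standard one via Yosida regularization, following the strategy of \cite[Prop.~4.1]{AS} but replacing the duality mapping of $V$ by the operator $A=\deo\modu$, whose fine properties were established in Lemma \ref{L:A-conti} and Theorem \ref{thm:mamo}. First I would introduce, for $\lambda>0$, the resolvent $J_\lambda:=(I+\lambda A^{-1}B)^{-1}$ --- or more precisely the regularization built on $A$ rather than the Riesz map --- and the corresponding Moreau--Yosida approximation $\fhi_\lambda$ of $\fhi$, defined by
\begin{equation*}
  \fhi_\lambda(v):=\inf_{w\in V}\left\{\frac1\lambda\modu(v-w)+\fhi(w)\right\},
\end{equation*}
with $B_\lambda:=\deo\fhi_\lambda$ its (single-valued, everywhere-defined) subdifferential. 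The key point here is that, because $A$ is demicontinuous, bounded, coercive and strictly monotone (Lemma \ref{L:A-conti}), all the classical properties of Yosida approximations --- $\fhi_\lambda\nearrow\fhi$ pointwise as $\lambda\downarrow0$, $\fhi_\lambda(v)\le\fhi(v)$, $B_\lambda(v)\in B(J_\lambda v)$, $\|A^{-1}(\xi-B_\lambda(u))\|_V$ bounded when $\xi\in B(u)$, and the convergence $J_\lambda u\to u$, $B_\lambda u\to$ (minimal section) --- carry over to this setting, exactly as in \cite[\S 4]{AS}. I would collect these as a preliminary lemma (this is really the content of the "customized resolvent / Yosida / Moreau--Yosida" material promised in the introduction for \S\ref{subsec:chain}).

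The second step is the chain rule for the \emph{regularized} functional: since $\fhi_\lambda$ is convex, lsc and (crucially) \emph{finite and continuous} on all of $\calV$, with $B_\lambda=\deo\fhi_\lambda$ bounded on bounded sets, the function $t\mapsto\fhi_\lambda(u(t))$ is Lipschitz on $[0,T]$ whenever $u\in\calV_1\hookrightarrow W^{1,1}(0,T;V)$ (using $\fhi_\lambda(u(t))-\fhi_\lambda(u(s))\le\langle B_\lambda(u(t)),u(t)-u(s)\rangle$ and symmetrically), and one gets
\begin{equation*}
  \fhi_\lambda(u(t))-\fhi_\lambda(u(s))=\int_s^t\duav{B_\lambda(u(\tau)),\partial_t u(\tau)}\,\d\tau.
\end{equation*}
This is the "standard version" referenced as Lemma \ref{L:chain-0}; its proof is the usual approximation of $\partial_t u$ by difference quotients plus the monotonicity inequality, now in the Musielak-Orlicz duality $\duav{\cdot,\cdot}$ rather than a Hilbert inner product, and it goes through verbatim because nothing there used homogeneity.

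The third, and main, step is passing to the limit $\lambda\downarrow0$. Write $u=J_\lambda u+\lambda A^{-1}B_\lambda u$, so $B_\lambda(u(\tau))=\tfrac1\lambda A\big(u(\tau)-J_\lambda u(\tau)\big)$; one then manipulates $\duav{B_\lambda(u(\tau)),\partial_t u(\tau)}$ --- adding and subtracting $\duav{B_\lambda(u(\tau)),\partial_t(J_\lambda u)(\tau)}$ --- to show the integrand is bounded below by (a derivative term plus) something controllable, and uses the energy-type bound coming from $\eta\in\calB(u)$ to get $\fhi(u(t))<\infty$ a.e.\ and a uniform bound on $\int_s^t\duav{B_\lambda u,\partial_t u}$. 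Then: (i) $\fhi_\lambda(u(t))\to\fhi(u(t))$ pointwise by monotone convergence; (ii) $B_\lambda(u(\tau))\rightharpoonup\eta(\tau)$ in $\calVp$, using that $\eta$ is the unique section of $\calB(u)=\Bext(u)$ in $\calVp$ and the standard identification of weak limits of Yosida approximations (here Lemma \ref{lemma:3} identifying $\calB$ with $\Bext$, and the reflexivity/separability of $\calV,\calVp$, are what make this legitimate); (iii) lower/upper semicontinuity arguments sandwich $\int_s^t\duav{\eta,\partial_t u}$ between $\limsup$ and $\liminf$ of $\fhi_\lambda(u(t))-\fhi_\lambda(u(s))$, yielding \eqref{chain}. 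Absolute continuity of $t\mapsto\fhi(u(t))$ then follows since it equals $\fhi(u(0))+\int_0^t\duav{\eta,\partial_t u}$ with an $L^1(0,T)$ integrand (by the Hölder inequality \eqref{hoelder} between $\eta\in\calVp=L^{\phj^*}(Q)$ and $\partial_t u\in\calV$, after integrating in $x$ and $t$).

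The hard part will be step three, specifically controlling the "cross term" $\duav{B_\lambda(u(\tau)),\partial_t u(\tau)-\partial_t(J_\lambda u)(\tau)}$ and proving $B_\lambda(u(\cdot))$ is bounded in $\calVp$ (not merely in $L^1(0,T;V^*)$) uniformly in $\lambda$ --- in the Hilbert/Bochner setting this is immediate from the resolvent identity and an energy estimate, but here one must instead use the coercivity/boundedness of $A$ together with Lemma \ref{lemma:1b} (the boundedness criterion for the modular) to convert modular bounds into norm bounds, and use $\Delta_2$ repeatedly to pass between $\modu$, $\modud$ and the Luxemburg norms. A secondary subtlety is that $J_\lambda u$ need not lie in $\calV_1$ with uniformly bounded $\calV_1$-norm, so the identity in step two must be applied to $u$ directly and the $J_\lambda u$-terms handled by the monotonicity inequality $\duav{B_\lambda u(t)-B_\lambda u(s),J_\lambda u(t)-J_\lambda u(s)}\ge0$ combined with Lipschitz estimates for $t\mapsto\fhi_\lambda(u(t))$, exactly the device used in \cite[Lemme~3.3]{Br} and \cite[Prop.~4.1]{AS}.
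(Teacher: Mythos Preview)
Your overall architecture --- Moreau--Yosida regularization based on the modular, standard chain rule for the regularized functional, then pass to the limit --- is exactly the paper's strategy. Two points, however, deserve correction and one deserves simplification.

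First, your scaling of the Moreau--Yosida regularization is not the one that works cleanly. The paper takes
\[
  \fhi_\lambda(v)=\min_{w\in V}\Big[\lambda\,\modu\!\Big(\frac{v-w}{\lambda}\Big)+\fhi(w)\Big],
\]
not $\frac{1}{\lambda}\modu(v-w)$. With this choice the minimizer $J_\lambda v$ solves $A\big((J_\lambda v-v)/\lambda\big)+B(J_\lambda v)\ni 0$ and the Yosida is $B_\lambda(v)=A\big((v-J_\lambda v)/\lambda\big)$; the Fenchel--Moreau identity then gives directly (Lemma~\ref{L:YA-conv}) that $(B_\lambda(v))_\lambda$ is bounded in $V^*$ whenever $v\in D(B)$. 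Your scaling would require an extra $\Delta_2$ argument to relate $\modud\big(\tfrac{1}{\lambda}A(v-J_\lambda v)\big)$ to $\tfrac{1}{\lambda}\modud(A(v-J_\lambda v))$, which is avoidable.

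Second, your step~(ii) asserts $B_\lambda(u(\cdot))\rightharpoonup\eta$ because ``$\eta$ is the unique section of $\calB(u)$''. This is not true: $\calB(u)$ may well be multivalued. The paper instead passes to the limit to obtain \emph{some} $b\in\calB(u)$ (up to a subsequence), proves the identity $\fhi(u(t))-\fhi(u(s))=\int_s^t\duav{b(\tau),\partial_t u(\tau)}\,\d\tau$ with that particular $b$, deduces absolute continuity, and only \emph{then} upgrades to an arbitrary $\eta\in\calB(u)$ by repeating the difference-quotient argument of Lemma~\ref{L:chain-0}.

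Finally, your step~3 is more complicated than it needs to be. The cross-term manipulation and any concern about $\partial_t(J_\lambda u)$ are unnecessary: since by hypothesis $u\in D(\calB)$ (at the \emph{space-time} level), the analogue of Lemma~\ref{L:YA-conv} for $\calB$ gives immediately that $\partial_Q\Fhi_\lambda(u)$ is bounded in $\calVp$ and converges weakly (along a subsequence) to some $b\in\calB(u)$. Combined with $\fhi_\lambda(u(t))\to\fhi(u(t))$ pointwise on $D(\fhi)$ (Lemma~\ref{lemma:res}), the integrated identity from step~2 passes to the limit by weak--strong pairing without any sandwiching or lower-semicontinuity tricks. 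The key lemma you are missing is Lemma~\ref{lemma:comm}, which identifies $[\calB_\lambda(u)](t)=B_\lambda(u(t))$ and lets you move freely between the pointwise and space-time formulations.
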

The proof of the theorem above will be given at the end of this section. 
Indeed, we first need to present some amount of preparatory material under 
Assumption \ref{hp:alpha}. In what follows, we shall treat only the 
functionals $\fhi$ and $\Fhi$, which are supposed to fulfill Assumption \ref{hp:B}. 
However, we shall not use any specific assumptions for such functionals except 
convexity and lower semicontinuity (for instance, (b) of Assumption \ref{hp:B} 
will be never used in this section). Let us begin with introducing a notion of 
Moreau-Yosida regularization of convex functionals. In what follows,
$\lambda\in(0,1)$, intended to go to $0$ in the limit, will denote the 
regularization parameter.
\begin{df}[Moreau-Yosida regularization based on the modular]\label{def:MY}
Let $\lambda\in(0,1)$ Then, the \emph{Moreau-Yosida regularization} of the functional $\fhi : V \to [0,+\infty]$
based on $\modu$ is given by 
 \begin{equation}\label{fhilla}
   \fhi\lla(u):= \min_{v\in V}
    \left[\lambda\modu\left(\frac{v-u}{\lambda}\right) + \fhi(v) \right]
     \quext{for }\, u \in V.
 \end{equation}
Analogously, the \emph{Moreau-Yosida regularization based on} $\Modu$ of 
the functional $\Fhi : \calV \to [0,+\infty]$ is defined as
 \begin{equation*}
   \Fhi\lla(u):= \min_{v\in \mathcal V}
    \left[ \lambda\Modu\left(\frac{v-u}{\lambda}\right) + \Fhi(v) \right]
    \quext{for }\, u \in \mathcal V.
 \end{equation*}
\end{df}
The above definition may remind us of the \emph{Hopf-Lax formula} for Hamilton-Jacobi equations. 
These notions allow us to introduce a generalized notion of {\sl resolvent}\/ of subdifferential 
operators. This will be defined in the following lemma,
where, for simplicity, we just deal with operators defined on~$V$;
however, an analogue in $\calV$ clearly holds as well.
\begin{lem}[Resolvent based on the modular]\label{lemma:res}
For each $u\in V$, $u\lla \in V$ is a minimizer in~\eqref{fhilla}
 if and only if $u\lla \in D(B)$ solves
 \begin{equation}\label{def-J}
  A \left(\frac{u_\lambda - u}{\lambda}\right) 
    + B(u_\lambda) \ni 0 
   \ \mbox{ in }\, \Vp.
 \end{equation}
  Moreover, the minimizer $u\lla \in D(B)$ exists uniquely. In what follows, 
  $u_\lam$ will be denoted by $J_\lam(u)$ and the operator $J_\lam : V \to D(B); u \mapsto u_\lam$ is called a \emph{resolvent based on} $\modu$ of $B$. Furthermore, it holds that
\begin{equation}\label{MY-order}
\fhi(J_\lam(u)) \leq \fhi_\lam(u) \leq \fhi(u) \quad \mbox{ for } \ u \in V.
\end{equation}
If $u \in D(\fhi)$, then we also have
\begin{equation}\label{J-conv}
J_\lam(u) \to u \quad \mbox{ strongly in } V \mbox{ as } \lam \to 0_+,
\end{equation}
and hence,  by the lower semicontinuity of $\fhi$ in $V$, 
\begin{equation}\label{MO-conv}
\fhi(J_\lam(u)) \to \fhi(u) \ \mbox{ as } \lam \to 0_+.
\end{equation}
Moreover, $\fhi_\lambda$ is G\^ateaux differentiable on $V$ and the G\^ateaux 
derivative $\partial_\Omega (\fhi_\lambda)$ of the Moreau-Yosida regularization $\fhi_\lambda$ 
coincides with the Yosida approximation $(\partial_\Omega \fhi)_\lambda$ 
of $\partial_\Omega \fhi$ {\rm (}hence we shall simply denote by $\partial_\Omega \fhi_\lambda$ 
both of them{\rm )}.
\end{lem}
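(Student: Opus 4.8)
The plan is to establish, in order: well-posedness of the minimizer in \eqref{fhilla} together with the Euler--Lagrange characterization \eqref{def-J}, then the estimates and convergences \eqref{MY-order}--\eqref{MO-conv}, and finally the G\^ateaux differentiability of $\fhi_\lambda$ and the identity $\deo\fhi_\lambda=(\deo\fhi)_\lambda$. Fix $u\in V$ and $\lambda\in(0,1)$ and consider $G(v):=\lambda\modu\big((v-u)/\lambda\big)+\fhi(v)$ on $V$. Since $\phj$ satisfies the $\Delta_2$-condition, $\modu$ is finite on all of $V$, hence (being convex) continuous there, so $G$ is convex, \lsc\ and proper (it is finite at every point of $D(\fhi)\neq\emptyset$). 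As $\fhi\geq0$ and $\modu$ is coercive in the sense of \eqref{bounded2}, one has $G(v)\geq\lambda\modu\big((v-u)/\lambda\big)\to+\infty$ when $\|v\|_V\to+\infty$; since $V$ is reflexive (Proposition \ref{P:reflexive}), the direct method of the calculus of variations produces a minimizer, which is unique because $v\mapsto\lambda\modu\big((v-u)/\lambda\big)$ is strictly convex (recall $\phj(x,\cdot)$ is strictly convex for a.e.\ $x$). This defines $J_\lambda(u):=u_\lambda$ uniquely. Because the first summand of $G$ is convex and continuous on $V$, the subdifferential sum rule applies; together with the elementary scaling identity $\deo\big[\lambda\modu\big((\cdot-u)/\lambda\big)\big](v)=A\big((v-u)/\lambda\big)$ (here $A=\deo\modu$) it shows that $u_\lambda$ minimizes $G$ if and only if $0\in A\big((u_\lambda-u)/\lambda\big)+B(u_\lambda)$, i.e.\ \eqref{def-J}; in particular $B(u_\lambda)\neq\emptyset$, so $u_\lambda\in D(B)$.

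Next, \eqref{MY-order} is immediate: testing \eqref{fhilla} with $v=u$ gives $\fhi_\lambda(u)\leq\lambda\modu(0)+\fhi(u)=\fhi(u)$, and discarding the nonnegative modular term in the identity $\fhi_\lambda(u)=\lambda\modu(w_\lambda)+\fhi(u_\lambda)$, with $w_\lambda:=(u_\lambda-u)/\lambda$, gives $\fhi(J_\lambda(u))=\fhi(u_\lambda)\leq\fhi_\lambda(u)$. For \eqref{J-conv}, let $u\in D(\fhi)$; combining the previous identity with $\fhi(u_\lambda)\geq0$ and \eqref{MY-order} yields $\lambda\modu(w_\lambda)\leq\fhi(u)-\fhi(u_\lambda)\leq\fhi(u)$, i.e.\ $\modu(w_\lambda)\leq\fhi(u)/\lambda$. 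Since $\modu$ is not positively homogeneous, the Hilbertian argument is unavailable, so I would argue by contradiction through the coercivity \eqref{bounded2}: were $\|u_\lambda-u\|_V\not\to0$, there would be $\delta>0$ and $\lambda_n\to0_+$ with $\|u_{\lambda_n}-u\|_V\geq\delta$; then $\|w_{\lambda_n}\|_V=\|u_{\lambda_n}-u\|_V/\lambda_n\geq\delta/\lambda_n\to+\infty$, whence $\modu(w_{\lambda_n})/\|w_{\lambda_n}\|_V\to+\infty$ by \eqref{bounded2}, whereas $\modu(w_{\lambda_n})/\|w_{\lambda_n}\|_V\leq(\fhi(u)/\lambda_n)/\|w_{\lambda_n}\|_V=\fhi(u)/\|u_{\lambda_n}-u\|_V\leq\fhi(u)/\delta$ stays bounded — a contradiction. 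Hence $J_\lambda(u)\to u$ strongly in $V$, and \eqref{MO-conv} then follows by combining the \lsc\ of $\fhi$ (so $\fhi(u)\leq\liminf_\lambda\fhi(u_\lambda)$) with $\fhi(u_\lambda)\leq\fhi(u)$.

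It remains to identify $\deo(\fhi_\lambda)$. Since $\fhi_\lambda$ is finite everywhere and convex it is continuous, so it suffices to prove that $\deo\fhi_\lambda(u)$ is the singleton $\{-A(w_\lambda)\}$; G\^ateaux differentiability then follows automatically, and the object so obtained is by construction the Yosida approximation attached to the modular-based resolvent. For the inclusion $-A(w_\lambda)\in\deo\fhi_\lambda(u)$: for arbitrary $z,v\in V$, use $A(w_\lambda)\in\deo\modu(w_\lambda)$ tested at $(z-v)/\lambda$ and $-A(w_\lambda)\in\deo\fhi(u_\lambda)$ (from \eqref{def-J}) tested at $z$; adding the two inequalities (the first multiplied by $\lambda$) and using $u_\lambda-\lambda w_\lambda=u$ gives $\lambda\modu\big((z-v)/\lambda\big)+\fhi(z)\geq\fhi_\lambda(u)+\duav{-A(w_\lambda),v-u}$, and taking the infimum over $z$ yields $\fhi_\lambda(v)\geq\fhi_\lambda(u)+\duav{-A(w_\lambda),v-u}$. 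Conversely, if $\beta\in\deo\fhi_\lambda(u)$, then bounding $\fhi_\lambda(u+h)$ from above by the value of the minimized functional at $z=u_\lambda+h$ forces $\duav{\beta,h}\leq\fhi(u_\lambda+h)-\fhi(u_\lambda)$ for all $h$, i.e.\ $\beta\in\deo\fhi(u_\lambda)$, while the choice $z=u_\lambda$ forces $\modu(w_\lambda+k)\geq\modu(w_\lambda)+\duav{-\beta,k}$ for all $k$, i.e.\ $-\beta\in\deo\modu(w_\lambda)=\{A(w_\lambda)\}$ — a singleton precisely because $\alpha(x,\cdot)$, hence $A$, is single-valued (Lemma \ref{lem:sing}). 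Thus $\beta=-A(w_\lambda)$, so $\deo\fhi_\lambda(u)=\{-A(w_\lambda)\}=\{A\big((u-J_\lambda(u))/\lambda\big)\}$; since $A\big((u-J_\lambda(u))/\lambda\big)\in B(J_\lambda(u))$, this is exactly $(\deo\fhi)_\lambda(u)$.

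The main obstacle is the strong convergence \eqref{J-conv}: in the classical Hilbert-space Moreau--Yosida theory the quadratic kernel yields $\|J_\lambda(u)-u\|\leq(2\lambda\fhi(u))^{1/2}\to0$ at once, but here $\modu$ is merely superlinear, so the only direct estimate, $\modu\big((u_\lambda-u)/\lambda\big)\leq\fhi(u)/\lambda$, has to be converted into norm convergence via the coercivity \eqref{bounded2} and the contradiction argument above. A secondary delicate point is the identification $\deo\fhi_\lambda=(\deo\fhi)_\lambda$, where single-valuedness of $\deo\modu$ (equivalently, of $\alpha$) is exactly what collapses $\deo\fhi_\lambda(u)$ to a point and thus delivers G\^ateaux differentiability of $\fhi_\lambda$.
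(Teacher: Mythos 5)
Your proof is correct and, for the key equivalence ``$u_\lambda$ minimizes $\Leftrightarrow$ $0\in A((u_\lambda-u)/\lambda)+B(u_\lambda)$,'' takes a genuinely different tactic from the paper. You invoke the Moreau--Rockafellar sum rule $\deo(F_1+F_2)=\deo F_1+\deo F_2$, justified by the observation that the $\Delta_2$-condition forces $\modu$ to be finite (hence, being convex and \lsc, continuous) on all of $V$, together with the scaling identity for $\deo[\lambda\modu((\cdot-u)/\lambda)]$. The paper instead proves the two implications by hand: the forward one by testing the inclusion with $v-u_\lambda$ and rearranging via the subgradient inequality for $\modu$, and the backward one by the classical interpolation trick (comparing the minimum with $tu_\lambda+(1-t)v$, dividing by $1-t$, and letting $t\nearrow 1$), which relies on the demicontinuity of $A$ from Lemma \ref{L:A-conti} rather than on continuity of the modular. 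Both are sound; the sum-rule route is more compact, the paper's route is self-contained and reuses a property of $A$ it has already built. For the remaining assertions the two arguments coincide in substance: \eqref{MY-order} is identical, and for \eqref{J-conv}, \eqref{MO-conv} and the G\^ateaux identification the paper merely refers to the literature (\cite{barbu,BCP}), whereas you supply explicit proofs. Your contradiction argument for \eqref{J-conv}, converting $\modu(w_\lambda)\le\fhi(u)/\lambda$ into strong convergence via the modular coercivity \eqref{bounded2} (itself a consequence of Lemma \ref{lemma:1b}), is exactly the natural replacement for the homogeneity-based Hilbertian estimate and is correct. Your two-inclusion argument identifying $\deo\fhi_\lambda(u)$ with the singleton $\{-A(w_\lambda)\}=\{B_\lambda(u)\}$ is also correct, and you correctly pinpoint that the single-valuedness of $\deo\modu$ (Lemma \ref{lem:sing}) is what collapses the subdifferential to a point and hence yields G\^ateaux differentiability.
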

\begin{proof}
Let $u_\lam$ satisfy \eqref{def-J}. We shall prove $u_\lam$ being a minimizer. Since $\alpha(x,r)$ is odd in $r$, relation \eqref{def-J} can be equivalently rewritten as 
\begin{equation*}
  \alpha\left(\cdot\,,\frac{ u - u_\lambda }{\lambda}\right) = b_\lambda, \quad b_\lambda \in B(u_\lambda) \ \mbox{ a.e.~in } \Omega
\end{equation*}
(see also Lemma \ref{lem:sing}). 
Then,  for any $v \in D(\fhi)$,  testing the above by $v - u\lla$ and recalling the definition of subdifferential, we infer that
\begin{align*}\no
  \fhi(v) - \fhi(u\lla) 
   & \ge \duav{ b\lla, v - u\lla }
    = \io \alpha\left(\cdot\,,\frac{ u - u_\lambda}{\lambda}\right) ( v - u\lla ) \, \d x\\
   & = \lambda \io \alpha\left(\cdot\,,\frac{ u- u_\lambda}{\lambda}\right) 
      \left( \frac{u - u\lla}\lambda - \frac{ u - v }\lambda \right) \, \d x \no\\
&     \ge \lambda \modu\left( \frac{u - u\lla}\lambda \right) 
  - \lambda \modu\left( \frac{u - v}\lambda \right),
\end{align*}   
whence $u_\lam$ achieves the minimum of \eqref{fhilla}. 

Conversely, let $u_\lam$ be a minimizer of \eqref{fhilla}. Then, for any $t\in(0,1)$ and $v\in V$, one has
\begin{align*}
\lefteqn{
  \lam \modu([u_\lam-u]/\lam) + \fhi(u_\lam)
}\no\\
 & \le \lam \modu([t u_\lam + (1-t) v-u]/\lam) + \fhi(t u_\lam + (1-t) v) \no\\
 & \le \lam \modu([t u_\lam + (1-t)v - u]/\lam) + t \fhi(u_\lam) + (1-t) \fhi(v).
\end{align*}
Hence, 
\begin{equation*}
  \fhi(u_\lam) -  \fhi(v) 
    \le \lam \frac{ \modu( [u_\lam-u]/\lam + (1-t)[v - u_\lam]/\lam ) - \modu([u_\lam-u]/\lam) }{1-t}.
\end{equation*}
Applying the definition of $A$ again, we deduce that
\begin{equation*}
  \fhi(u_\lam) - \fhi(v)
    \le \duav{ A( [u_\lam-u]/\lam + (1-t)[v - u_\lam]/\lam ), v - u_\lam }.     
\end{equation*}
Now, by virtue of the demicontinuity of $A$ (see Lemma \ref{L:A-conti}),
$$
\left\langle A( [u_\lam-u]/\lam + (1-t)[v - u_\lam]/\lam ), v-u_\lam \right\rangle
\to \left\langle A([u_\lam-u]/\lam), v-u_\lam \right\rangle \ \mbox{ as } t \nearrow 1,
$$
which implies
\begin{equation*}
  \fhi(u_\lam) - \fhi(v)
     \le \duav{ -A([u_\lam-u]/\lam), u_\lam-v }
     \ \mbox{ for }  v \in V. 
\end{equation*}
Recalling that $B=\deo \fhi$, the above relation can be equivalently rewritten as 
$u_\lam \in D(B)$ and $-A([u_\lam-u]/\lam) \in B(u_\lam)$, which is the assertion.

The uniqueness of minimizers follows from the strict convexity of the functional $v \mapsto \lambda\modu([v-u]/\lambda)+\fhi(v)$ for each $u \in V$ fixed. 
Moreover, \eqref{MY-order} follows immediately from the definition, and moreover, \eqref{J-conv} 
and \eqref{MO-conv} can be proved as in the standard setting (see, e.g.,~\cite{BCP},~\cite{barbu}). 
Finally, as in~\cite{barbu}, one can verify the G\^ateaux differentiability of $\fhi_\lam$ in $V$ 
as well as the coincidence $\partial_\Omega (\fhi_\lam) = (\partial_\Omega \fhi)_\lambda$. 
\end{proof}

Now, the \emph{Yosida approximation} $B\lla : V \to \Vp$ \emph{based on} $\Modu$ of the operator $B$ is defined by
\begin{equation}\label{def:Alt}
  B\lla (u) := A \left(\frac{u - J\lla(u)}{\lambda}\right)
    \quext{for }\, u \in V
\end{equation}
with $A=\partial_\Omega\modu$.
 Since $A$ is bounded from $V$ to $\Vp$ (see Lemma \ref{L:A-conti}), so is $B\lla$ for $\lam>0$ fixed. Indeed, $J_\lam : V \to V$ is also bounded for $\lam > 0$. 
Hence \eqref{def-J} yields
\begin{equation}\label{incl:yos}
  B\lla (u) \in B( J\lla(u) )
    \quext{for all }\, u \in V.
\end{equation}
The following lemma is also useful:
\begin{lem}[Convergence of Yosida approximations]\label{L:YA-conv}
For each $u \in D(B)$, the family $(B_\lam(u))$ is bounded in $V^*$ as $\lam \to 0_+$, 
and moreover, there exist a sequence $\lam_n \to 0_+$ and $b_0 \in V^*$ such that 
$B_{\lam_n}(u) \to b_0$ weakly in $V^*$ as $\lam_n \to 0_+$ and $b_0 \in B(u)$. 
\end{lem}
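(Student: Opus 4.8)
The plan is to mimic the classical construction of Yosida approximations, recast in the modular language.

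First I would prove the uniform bound. Fix $u \in D(B) \subset D(\fhi)$ and pick any $w \in B(u)$. By \eqref{incl:yos}, $B_\lam(u) \in B(J_\lam(u))$, so monotonicity of $B$ gives $\duav{B_\lam(u) - w,\, J_\lam(u) - u} \geq 0$. Introducing $z_\lam := \lam^{-1}(u - J_\lam(u))$, so that $B_\lam(u) = A(z_\lam)$ and $J_\lam(u) - u = -\lam z_\lam$, this rearranges (dividing by $-\lam$) to $\duav{A(z_\lam), z_\lam} \leq \duav{w, z_\lam}$. On the left I would use the Fenchel--Moreau identity $\duav{A(z_\lam), z_\lam} = \modu(z_\lam) + \modud(A(z_\lam))$, which is available because $A = \deo \modu$ and $\modud = (\modu)^*$ by (vii) of Proposition \ref{P:proper}; on the right I would use Young's inequality \eqref{phi-young}, integrated, to get $\duav{w, z_\lam} \leq \modu(z_\lam) + \modud(w)$. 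The two occurrences of $\modu(z_\lam)$ cancel, leaving $\modud(B_\lam(u)) \leq \modud(w)$. Since $\phj^*$ satisfies the $\Delta_2$-condition and $w \in V^* \cong L^{\phj^*}(\Omega)$, we have $\modud(w) < +\infty$, so $\sup_{\lam\in(0,1)}\modud(B_\lam(u)) < +\infty$; Lemma \ref{lemma:1b}(ii) then shows that $\{B_\lam(u) : \lam \in (0,1)\}$ is bounded in $V^*$.

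Next, by reflexivity of $V^*$ (Proposition \ref{P:reflexive}), from any sequence $\lam_n \to 0_+$ I can extract a (not relabeled) subsequence along which $B_{\lam_n}(u) \to b_0$ weakly in $V^*$ for some $b_0 \in V^*$. Finally, to show $b_0 \in B(u)$, I would invoke the demiclosedness of the maximal monotone operator $B = \deo\fhi$: since $u \in D(\fhi)$, \eqref{J-conv} yields $J_{\lam_n}(u) \to u$ strongly in $V$, and combined with $B_{\lam_n}(u) \in B(J_{\lam_n}(u))$ and the weak convergence $B_{\lam_n}(u) \to b_0$, passing to the limit in $\duav{B_{\lam_n}(u) - \eta,\, J_{\lam_n}(u) - v} \geq 0$ for an arbitrary $[v,\eta] \in G(B)$ and using maximality gives $b_0 \in B(u)$.

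The main obstacle I anticipate is entirely in the first step: I must make sure every modular quantity that appears is genuinely finite, so that the cancellation of $\modu(z_\lam)$ is not an $\infty - \infty$ sleight of hand. This is exactly where the $\Delta_2$- and $\nabla_2$-structure of $\phj$ is needed --- to guarantee $\modud(w) < +\infty$, and, through (iv)--(vii) of Proposition \ref{P:proper}, to identify $\modud$ with the convex conjugate $(\modu)^*$ so that the Fenchel--Moreau identity above is legitimate. Everything else is a routine consequence of results already proved (Lemmas \ref{L:phj*}, \ref{lemma:1b}, the properties of $J_\lam$ in Lemma \ref{lemma:res}, and Proposition \ref{P:reflexive}).
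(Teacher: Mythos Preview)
Your proposal is correct and follows essentially the same route as the paper: both start from the monotonicity inequality $\duav{B_\lam(u)-w,\,J_\lam(u)-u}\ge 0$, rewrite the left side via the Fenchel--Moreau identity for $A=\deo\modu$, and finish with reflexivity of $V^*$ plus the strong convergence $J_\lam(u)\to u$ and demiclosedness of $B$. The only cosmetic difference is that you cancel $\modu(z_\lam)$ directly via Young's inequality to obtain $\modud(B_\lam(u))\le\modud(w)$ in one stroke, whereas the paper first uses Lemma~\ref{lemma:1b} to bound $(z_\lam)$ in $V$ and then invokes Lemma~\ref{L:bdd-equiv}; your variant is slightly more direct (and Lemma~\ref{L:bdd-equiv} would be the more natural citation for your final step than Lemma~\ref{lemma:1b}(ii)).
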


\begin{proof}
Recalling $B_\lam(u) \in B(J_\lam(u))$, we have
$$
\langle b - B_\lam(u), u - J_\lam(u) \rangle \geq 0 \quad \mbox{ for any }\ b \in B(u),
$$
which implies
\begin{align*}
 \langle b , [u-J_\lam (u)]/\lam \rangle &\geq \langle B_\lam(u), [u-J_\lam(u)]/\lam \rangle\\
&= \modu([u-J_\lam(u)]/\lam) + \modud(B_\lam(u)).
\end{align*}
Here we used the relation $B_\lam(u) = A([u-J_\lam(u)]/\lam)$ and the Fenchel-Moreau identity. 
Thus, thanks to Lemma \ref{lemma:1b}, we find that $([u-J_\lam(u)]/\lam)$ is bounded in $V$. 
Furthermore, by virtue of Lemma \ref{L:bdd-equiv}, it also follows from the inequality above that
$$
\|B_\lam(u)\|_{V^*} \leq C.
$$
Hence one can take a sequence $\lam_n \to 0_+$ and $b_0 \in V^*$ such that $B_{\lam_n}(u) \to b_0$ weakly in $V^*$ as $\lam \to 0_+$. Recalling that $J_{\lam_n}(u) \to u$ strongly in $V$ and employing the demiclosedness of maximal monotone operators, we conclude that $b_0 \in B(u)$.
\end{proof}

The resolvent $\calJ_\lambda:\calV\to D(\calB)$ of $\deq\Fhi$ based on $\Modu$ 
is defined in a similar way. The \emph{Yosida approximation} $\calB\lla : \calV \to \calVp$ 
of $\calB$ based on $\Modu$ is defined analogously with \eqref{def:Alt}, is bounded from $\calV$ to $\calVp$,
and satisfies a relation similar to \eqref{incl:yos}. Furthermore, an analogue of Lemma \ref{L:YA-conv} for $\calA$ also holds true. We next have

\begin{lem}[Resolvent and Yosida approximation of $\calB$]\label{lemma:comm}
For every  $u\in \calV$, it holds that $[\calJ\lla(u)](t) = J\lla(u(t))$
 and $[\calB\lla(u)](t) = B\lla(u(t))$ for almost every
 $t\in(0,T)$. 
\end{lem}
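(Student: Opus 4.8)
The plan is to reduce the vector-valued statement to the pointwise-in-time one by exploiting the variational characterization of the resolvent (Lemma~\ref{lemma:res}), applied in both the space-only setting on $V$ and the space-time setting on $\calV$. The starting observation is that, for $u\in\calV$, the time-dependent resolvent $\calJ_\lambda(u)$ is by definition the unique minimizer of
$$
w\in\calV \mapsto \lambda\Modu\left(\frac{w-u}{\lambda}\right)+\Fhi(w),
$$
and, by Fubini-Tonelli together with the definition of $\Modu$ and $\Fhi$, this functional equals $\int_0^T\big[\lambda\modu\big((w(t)-u(t))/\lambda\big)+\fhi(w(t))\big]\,\dit$. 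Since $u\in\calV$ gives $u(t)\in V$ for a.e.~$t$ (recall that $\calV\hookrightarrow L^1(0,T;V)$, Lemma~\ref{lem:emb1}), the candidate $t\mapsto J_\lambda(u(t))$ pointwise minimizes the integrand. The first thing I would check is that this candidate is admissible, i.e.\ that $t\mapsto J_\lambda(u(t))$ defines an element of $\calV$: measurability follows from the demicontinuity of the map $T=(A+B)^{-1}$ proved in Lemma~\ref{lemma:3a} (so $t\mapsto J_\lambda(u(t))$ is the composition of a measurable map into $V^*$ with a demicontinuous map, hence strongly measurable by Pettis, exactly as in the proof of Lemma~\ref{lemma:3}), and then $\Modu(J_\lambda(u))<+\infty$ because $\lambda\modu((J_\lambda(u(t))-u(t))/\lambda)+\fhi(J_\lambda(u(t)))\le\lambda\modu(0)+\fhi(u(t))$ and, after integration, one controls $\modu(J_\lambda(u(t)))$ by Young's inequality (Lemma~\ref{L:young}) through $\modu(u(t))$, which is integrable since $u\in\calV$; Lemma~\ref{L:bdd-equiv} then upgrades this to $\|J_\lambda(u(\cdot))\|_V\in L^1(0,T)$.

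Granting admissibility, the core argument is a standard ``integral of pointwise minima versus minimum of integral'' comparison. On one hand, $\calJ_\lambda(u)$ is the global minimizer, so
$$
\int_0^T\!\Big[\lambda\modu\big(\tfrac{[\calJ_\lambda(u)](t)-u(t)}{\lambda}\big)+\fhi([\calJ_\lambda(u)](t))\Big]\dit
\le
\int_0^T\!\Big[\lambda\modu\big(\tfrac{J_\lambda(u(t))-u(t)}{\lambda}\big)+\fhi(J_\lambda(u(t)))\Big]\dit.
$$
On the other hand, for a.e.\ fixed $t$, $J_\lambda(u(t))$ minimizes the integrand over all of $V$, so the pointwise inequality
$$
\lambda\modu\big(\tfrac{J_\lambda(u(t))-u(t)}{\lambda}\big)+\fhi(J_\lambda(u(t)))
\le
\lambda\modu\big(\tfrac{[\calJ_\lambda(u)](t)-u(t)}{\lambda}\big)+\fhi([\calJ_\lambda(u)](t))
$$
holds for a.e.\ $t$. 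Combining the two displays forces equality in the pointwise inequality for a.e.\ $t$; since $J_\lambda(u(t))$ is the \emph{unique} pointwise minimizer (strict convexity of $v\mapsto\lambda\modu((v-u(t))/\lambda)+\fhi(v)$, from the strict convexity of $\phj(x,\cdot)$), we conclude $[\calJ_\lambda(u)](t)=J_\lambda(u(t))$ for a.e.\ $t\in(0,T)$. The claim for the Yosida approximation is then immediate from the defining formula \eqref{def:Alt} and its space-time analogue: $[\calB_\lambda(u)](t)=\calA\big(([\,u-\calJ_\lambda(u)\,]/\lambda)(t)\big)=A\big((u(t)-J_\lambda(u(t)))/\lambda\big)=B_\lambda(u(t))$, where the middle identity uses Lemma~\ref{lem:sing} (the operator $\calA$ acts pointwise in $t$) and the fact just proved.

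The step I expect to be the main obstacle is the admissibility/measurability of $t\mapsto J_\lambda(u(t))$ — in particular, guaranteeing that this genuinely produces an element of the space-time Musielak-Orlicz space $\calV$ (not merely of $L^1(0,T;V)$, which is all one gets for free from the resolvent bound), and, more subtly, arranging the measurement correctly so that the Fubini-Tonelli step identifying $\Modu(w)+\Fhi(w)$ with $\int_0^T[\cdots]\,\dit$ is legitimate for the relevant competitors. This is exactly the kind of ``regularity gap between Bochner-type and space-time spaces'' that the paper repeatedly flags; the remedy is the same device used in the proof of Lemma~\ref{lemma:3}, namely to lean on the demicontinuity of $T=(A+B)^{-1}$ and on the measurability criterion of \cite[Appendix]{AS} to pass from strong measurability in $L^1(0,T;V)$ to Lebesgue measurability on $Q$. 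Once measurability on $Q$ is secured, the finiteness of $\Modu(\calJ_\lambda(u))$ and of $\Modu(J_\lambda(u(\cdot)))$, hence membership in $\calV$, follows from the energy comparison together with $\vep$-Young's inequality (Lemma~\ref{L:young}) and the boundedness criterion Lemma~\ref{lemma:1b}/Lemma~\ref{L:bdd-equiv}, exactly as in \eqref{int11}. All remaining manipulations are routine, and I would relegate them to the reader in the style of the surrounding proofs.
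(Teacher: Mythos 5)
Your proof takes a genuinely different route from the paper's. You argue by comparing the global $\calV$-minimizer $\calJ_\lambda(u)$ against the pointwise-in-$t$ minimizer $J_\lambda(u(t))$ through the variational characterization of Definition~\ref{def:MY}, then upgrade the resulting integrated comparison to a.e.\ equality via strict convexity. The paper instead works at the level of the Euler--Lagrange inclusion of Lemma~\ref{lemma:res}: $\calJ_\lambda(u)\in\calV$ satisfies $\calA\big(\tfrac{\calJ_\lambda(u)-u}{\lambda}\big)+\calB(\calJ_\lambda(u))\ni 0$ in $\calV^*$, and Lemmas~\ref{lem:sing} and~\ref{lemma:3} (identifying $\calA$ with the pointwise action of $A$ and $\calB$ with $\Bext$) turn this into $A\big(\tfrac{[\calJ_\lambda(u)](t)-u(t)}{\lambda}\big)+B([\calJ_\lambda(u)](t))\ni 0$ for a.e.\ $t$, whence $[\calJ_\lambda(u)](t)=J_\lambda(u(t))$ by uniqueness of solutions to \eqref{def-J}. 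The advantage of that route is that the pointwise candidate is identified a posteriori as a slice of the $\calV$-element $\calJ_\lambda(u)$, so measurability and membership in $\calV$ come for free.

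This points to a genuine gap in your argument, in exactly the place you flag as the main obstacle. The claim that measurability of $t\mapsto J_\lambda(u(t))$ follows from the demicontinuity of $T=(A+B)^{-1}$ in Lemma~\ref{lemma:3a} does not hold as stated: for $z=J_\lambda(u)$ the resolvent relation reads $A\big(\tfrac{z-u}{\lambda}\big)+B(z)\ni 0$, where the argument of $A$ is $(z-u)/\lambda$, not $z$, so $J_\lambda$ is not a composition of the form $T\circ(\text{measurable map into }V^*)$ and Lemma~\ref{lemma:3a} cannot be invoked directly. Without strong measurability of $J_\lambda(u(\cdot))$ you cannot place it in $\calV$, and hence cannot use it as a competitor in the global minimization nor justify the Fubini identification of the integrated objective. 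The plan is salvageable --- one could prove directly that $J_\lambda:V\to V$ is demicontinuous (via a boundedness estimate from the minimization inequality, weak lower semicontinuity of $\modu$ and $\fhi$, and uniqueness of minimizers), and Pettis' theorem would then give strong measurability --- but that is an additional lemma not present in the paper, and the paper's proof bypasses the issue entirely.
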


\begin{proof}
Let $u \in \mathcal V$ be fixed. Then there uniquely exists $u_\lambda \, (= \calJ_\lambda(u)) \in D(\calB)$ such that
 $$
 \calA\left(\dfrac{u_\lambda-u}\lambda\right)+\calB(u_\lambda)\ni 0 \ \mbox{ in } \calV^*,
 $$
 which together with Lemmas \ref{lem:sing} and \ref{lemma:3} implies
 $$
 A\left(\dfrac{u_\lambda(t)-u(t)}\lambda\right)+B(u_\lambda(t))\ni 0 \ \mbox{ for a.e. } t \in (0,T).
 $$
 Since $u(t)$ and $u_\lambda(t)$ belong to $V$ for a.e.~$t \in (0,T)$, we deduce from the uniqueness of solution to \eqref{def-J} that
 $$
 [\calJ_\lambda(u)](t) = u_\lambda(t) = J_\lambda(u(t)) \ \mbox{ for a.e. } t \in (0,T).
 $$
 Hence in particular, one has $J_\lambda(u(\cdot)) \in \calV$. Moreover, we obtain
 $$
 [\calB_\lambda(u)](t) = B_\lambda(u(t)) \ \mbox{ for a.e. } t \in (0,T).
 $$
 This completes the proof.
\end{proof}

In order to prove Theorem \ref{teo:chain}, we need to use a
``standard'' version of the chain-rule formula, which is reported, together
with its proof, for the convenience of the reader. 
\begin{lem}[Standard chain-rule formula for subdifferentials]\label{L:chain-0}
Let $B$ a Banach space and denote by $B^*$ its dual space with the duality pairing $\langle \cdot, \cdot \rangle_B$. Let $\phi : B \to (-\infty,+\infty]$ be a lower semicontinuous convex functional such that $\phi \not\equiv +\infty$ and let $\partial \phi : B \to 2^{B^*}$ be the subdifferential operator of $\phi$. Let $u \in W^{1,1}(0,T;B)$ be such that $u(t) \in D(\partial \phi)$ for a.e.~$t \in (0,T)$ and assume that there exists $g \in L^\infty(0,T;B^*)$ such that $g(t) \in \partial \phi(u(t))$ for a.e.~$t \in (0,T)$. Then the function $t \mapsto \phi(u(t))$ is absolutely continuous on $[0,T]$, and moreover,
 $$
 \dfrac{\d}{\d t} \phi(u(t)) = \langle h, \partial_t u(t) \rangle_B \ \mbox{ for any } h \in \partial \phi(u(t)) \mbox{ and a.e. } t \in (0,T).
 $$
\end{lem}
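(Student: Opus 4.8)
The plan is to derive everything from the defining inequality of $\partial\phi$, used in both directions, combined with the fact that $u\in W^{1,1}(0,T;B)$ is absolutely continuous as a $B$-valued function, so that $\|u(t)-u(s)\|_B \le \int_s^t \|\partial_t u(\tau)\|_B\,\d\tau$ for $0\le s\le t\le T$ and, in particular, $u$ is continuous on $[0,T]$. \emph{Step 1 (an a priori modulus of continuity).} Let $N\subset(0,T)$ be the null set off which $g(\tau)\in\partial\phi(u(\tau))$ (so in particular $u(\tau)\in D(\phi)$). For $s,t\notin N$ with $s\le t$, testing the subgradient inequality for $\partial\phi$ at $u(s)$ against $u(t)$, and at $u(t)$ against $u(s)$, yields
\[
\langle g(s), u(t)-u(s)\rangle_B \;\le\; \phi(u(t))-\phi(u(s)) \;\le\; \langle g(t), u(t)-u(s)\rangle_B ,
\]
whence $|\phi(u(t))-\phi(u(s))| \le \|g\|_{L^\infty(0,T;B^*)}\int_s^t \|\partial_t u(\tau)\|_B\,\d\tau$. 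Thus $t\mapsto\phi(u(t))$, restricted to the dense set $[0,T]\setminus N$, has the modulus of continuity of the absolutely continuous function $t\mapsto \|g\|_{L^\infty(0,T;B^*)}\int_0^t \|\partial_t u\|_B$, hence extends to an absolutely continuous function $\psi$ on $[0,T]$ obeying the same estimate for all $s,t\in[0,T]$.

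\emph{Step 2 ($\psi(t)=\phi(u(t))$ for every $t$).} For $t\notin N$ this holds by construction. For $t_0\in N$: on the one hand, lower semicontinuity of $\phi$ together with $u(s)\to u(t_0)$ in $B$ gives $\phi(u(t_0))\le \liminf_{s\to t_0,\,s\notin N}\phi(u(s))=\psi(t_0)<+\infty$; on the other hand, fixing any $s\notin N$ we have $\phi(u(t_0))\ge \phi(u(s))+\langle g(s),u(t_0)-u(s)\rangle_B$, and letting $s\to t_0$ with $s\notin N$ the pairing term vanishes (since $\|g(s)\|_{B^*}\le\|g\|_{L^\infty(0,T;B^*)}$ while $u(s)\to u(t_0)$), so $\phi(u(t_0))\ge\psi(t_0)$. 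Hence $\phi(u(t_0))=\psi(t_0)$, and $t\mapsto\phi(u(t))$ is absolutely continuous on $[0,T]$.

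\emph{Step 3 (the pointwise derivative).} Being absolutely continuous, $\psi:=\phi(u(\cdot))$ is differentiable a.e. Fix $t$ at which $\psi'(t)$ exists, $\partial_t u(t)$ exists in $B$, and $u(t)\in D(\partial\phi)$ — all of which hold for a.e.\ $t$. For any $h\in\partial\phi(u(t))$ and $\delta\ne0$ small, $\phi(u(t+\delta))\ge\phi(u(t))+\langle h,u(t+\delta)-u(t)\rangle_B$; dividing by $\delta>0$ and letting $\delta\to0^+$ gives $\psi'(t)\ge\langle h,\partial_t u(t)\rangle_B$, while dividing by $\delta<0$ and letting $\delta\to0^-$ gives the opposite inequality. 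Hence $\dfrac{\d}{\d t}\phi(u(t))=\langle h,\partial_t u(t)\rangle_B$ for every $h\in\partial\phi(u(t))$ and a.e.\ $t\in(0,T)$.

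The only genuinely delicate point is Step~2: the estimate of Step~1 controls $\phi(u(\cdot))$ only on a set of full measure, and to exclude a jump of $\phi(u(\cdot))$ at a time $t_0\in N$ one must combine the lower semicontinuity of $\phi$ with a one-sided subgradient bound, the hypothesis $g\in L^\infty(0,T;B^*)$ (rather than merely $L^1$) being what forces the pairing term $\langle g(s),u(t_0)-u(s)\rangle_B$ to tend to $0$. The remaining steps are routine two-sided comparisons of difference quotients.
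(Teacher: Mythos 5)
Your proof is correct and follows essentially the same route as the paper's: a two-sided subgradient estimate yields the absolute continuity, and two-sided comparison of difference quotients yields the pointwise derivative formula. Your Step~2, which patches the null set where $g(\cdot)\in\partial\phi(u(\cdot))$ might fail by combining density with the lower semicontinuity of $\phi$ and a one-sided subgradient bound, is a welcome refinement of a point the paper's proof states informally (it asserts the modulus-of-continuity estimate ``for any $0\le s<t\le T$'' even though $g(t)\in\partial\phi(u(t))$ is only known a.e.), but the underlying strategy is identical.
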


\begin{proof}
We shall first prove that $t \mapsto \phi(u(t))$ is absolutely continuous on $[0,T]$. By the definition of subdifferential, we see that
 $$
 \phi(u(t))-\phi(u(s)) \leq \langle g(t),u(t)-u(s) \rangle_B
 \leq \|g\|_{L^\infty(0,T;B^*)} \int^t_s \|\partial_t u(\tau)\|_B \, \d \tau 
 $$
 for any $0 \leq s < t \leq T$. Repeating the same argument above, we can also derive that 
$$
\phi(u(s)) - \phi(u(t)) \leq \|g\|_{L^\infty(0,T;B^*)} \int^t_s \|\partial_t u(\tau)\|_B \, \d \tau. 
$$
Thus we obtain the absolute continuity of the function $t \mapsto \phi(u(t))$ on $[0,T]$, 
since $\|\partial_t u(\cdot)\|_B$ belongs to $L^1(0,T)$. Moreover, let $h \in \partial \phi(u(t))$. 
Then one can derive that, for $\delta > 0$ small enough, 
 $$
 \phi(u(t+\delta))-\phi(u(t)) \geq \langle h, u(t+\delta)-u(t) \rangle_B \ \mbox{ for a.e. } t \in (0,T).
 $$
 Dividing both sides by $\delta>0$ and taking a limit as $\delta \to 0_+$, one obtains
 $$
 \dfrac{\d}{\d t} \phi(u(t)) \geq \langle h, \partial_t u(t) \rangle_B \ \mbox{ for a.e. } t \in (0,T).
 $$
 The inverse inequality can also be checked by repeating the same argument with $\delta<0$ 
 and by passing to the limit as $\delta \to 0_-$.
\end{proof}

Thanks to the preparatory material established so far, we are now able to prove Theorem \ref{teo:chain} by 
 basically following the lines of the proof of the standard chain-rule formula 
(see, e.g.,~\cite{Br}). However, we stress that,  in our setting, the argument strongly
relies on the modular-based versions of resolvent, Yosida approximation and Moreau-Yosida regularization 
introduced before. Conversely, our methods would not work, at least in a straightforward way,
if the standard notions of these objects (i.e.~those defined in~\cite{barbu,BCP}) would be 
considered instead. 
\begin{proof}[Proof of Theorem \ref{teo:chain}]
Let $u \in \calV_1 \cap D(\mathcal B)$. Then we observe from Lemma \ref{lem:emb1} that $u \in W^{1,1}(0,T;V)$. Moreover, it follows that $\partial_\Omega \fhi_\lambda(u(\cdot)) \in L^\infty(0,T;\Vp)$ for $\lam > 0$ due to the boundedness of $\partial_\Omega \fhi_\lambda : V \to V^*$ and $u : [0,T] \to V$. Hence, thanks to Lemma \ref{L:chain-0}, we observe that $t \mapsto \fhi_\lam(u(t))$ is absolutely continuous on $[0,T]$, and moreover,
$$
\dfrac{\d}{\d t} \fhi_\lambda(u(t)) = \langle \partial_\Omega \fhi_\lambda(u(t)), \partial_t u(t) \rangle_V 
\ \mbox{ for a.e. } t \in (0,T).
$$
Integrate both sides over $(s,t)$ with $0<s<t<T$. Then we have
\begin{align*}
\fhi_\lambda(u(t)) - \fhi_\lambda(u(s)) 
&= \int^t_s \langle \partial_\Omega \fhi_\lambda(u(\tau)), \partial_t u(\tau) \rangle_V \, \d \tau \\
&= \duaV{\partial_Q \Fhi_\lam(u), \partial_t u}.
\end{align*}
Here we recall that $\duaV{\cdot,\cdot}$ stands for the duality pairing between $\mathcal V$ and $\mathcal V^*$. Here we also used the analogue of Lemma \ref{lemma:comm} on subintervals and Fubini's lemma to verify the last equality. 
Take a limit as $\lambda \to 0_+$ and note by Lemma \ref{lemma:res} that
$$
  \fhi_\lambda(w) \to \fhi(w) \ \mbox{ for } \ w \in D(\fhi).
$$
Moreover, for $w \in D(\calB)$, one can take $b \in \calB(w)$ such that, 
 at least for a non-relabeled subsequence of $\lambda \to 0_+$, 
$$
  \partial_Q \Fhi_\lambda(w) \to b \ \mbox{ weakly in } \calVp
$$
(see Lemma \ref{L:YA-conv}). Hence we obtain
  $$
   \fhi(u(t)) - \fhi(u(s)) = \int^t_s \langle b(\tau), \partial_t u(\tau) \rangle_V \, \d \tau.
  $$
  Since $\partial_t u \in \calV$ and $b \in \calV^*$ (i.e., $\langle b(\cdot), \partial_t u(\cdot) \rangle \in L^1(0,T)$), the function $t \mapsto \fhi(u(t))$ turns out to be absolutely continuous on $[0,T]$. 
  Then, repeating the same argument as in the proof of Lemma \ref{L:chain-0}, we can also verify \eqref{chain} for a general $\eta \in \calB(u)$.
 \end{proof}

\section{Proof of Theorem \ref{teo:main}}
\label{sec:proof}

We are now in a position to prove Theorem \ref{teo:main}. We shall follow a more or less standard strategy for proving existence of strong solutions (see~\cite{AS}) but based on the devices developed so far specifically for the present Musielak-Orlicz setting.

\subsection{Approximation}

In this subsection, we shall construct approximate solutions in terms of time-discretization.
To this end, we fix $K \in \N$ and set $\tau := T/K > 0$, and then, we consider the following minimizing problem,
$$
\mbox{Minimize } \ J_k(w) := \tau \modu \left( \dfrac{w-u_{k-1}}\tau \right) + 
\fhi(w) - \langle f_k, w \rangle_V \ \mbox{ over } \ w \in V,
$$
where $f_k \in \Vp$ is given by
$$
f_k := \dfrac 1 \tau \int^{k\tau}_{(k-1)\tau} f(t) \, \dit,
$$
for $k = 1,\ldots,K$. Then, for each $k$, there exists a unique minimizer $u_k \in D(B)$ of $J_k$, and moreover, $u_k$ solves
\begin{equation}\label{disc-k}
A\left( \dfrac{u_k-u_{k-1}}\tau \right) + B(u_k) \ni f_k \ \mbox{ in } \Vp.
\end{equation}
Indeed, existence and uniqueness of the minimizer can be proved by the use of Direct Method of Calculus of Variation and the strict convexity of $J_k$, respectively. Moreover, the Euler-Lagrange equation \eqref{disc-k} also follows from the sum-rule of subdifferentials, i.e.,
$$
\partial_\Omega \left[ \tau \modu \left(\frac{\cdot-u_{k-1}}\tau \right) + \fhi(\cdot)\right]
= \partial_\Omega \modu \left(\frac{\cdot-u_{k-1}}\tau\right) + \partial_\Omega \fhi(\cdot)
\ \mbox{ for each } \ k.
$$
Indeed,  the above holds whenever  
the domain of $A$ coincides with the whole of $V$ (see, e.g.,~\cite{barbu}).

We next define the \emph{piecewise linear interpolant} $u_\tau : [0,T] \to V$ and \emph{piecewise constant interpolant} $\bar u_\tau : [0,T] \to V$ by
\begin{alignat*}{4}
 u_\tau(t) &= \dfrac{t-(k-1)\tau}\tau u_k + \dfrac{k\tau - t}\tau u_{k-1}, \quad \bar u_\tau(t) = u_k \\
   &\qquad \quad \mbox{ for } \ t \in ((k-1)\tau, k\tau] \ \mbox{ and } \ k = 1,\ldots,K,\\
 u_\tau(0) &=\bar u_\tau(0)=u_0.
\end{alignat*}
Then $u_\tau \in W^{1,\infty}(0,T;V) \subset \calV_1$ and $\bar u_\tau \in L^\infty(0,T;V) \subset \calV$ (see Lemma \ref{lem:emb1}). Furthermore, it follows that
\begin{equation*}
A(\partial_t u_\tau(t))+ \eta_\tau(t) = \bar f_\tau(t), \quad \eta_\tau(t) \in B(\bar u_\tau(t)) \ \mbox{ in } V^* \ \mbox{ for a.e. } t \in (0,T)
\end{equation*}
and also
\begin{equation}\label{aprx-EQ}
\calA(\partial_t u_\tau)+ \eta_\tau = \bar f_\tau, \quad \eta_\tau \in \calB(\bar u_\tau) \ \mbox{ in } \calVp,
\end{equation}
where $\eta_\tau := \bar f_\tau - \calA(\partial_t u_\tau)$ and $\bar f_\tau$ stands for the piecewise constant interpolant of $\{f_k\}_{k=1,2,\ldots,K}$ defined as above. Here we remark that
\begin{lem}
It holds that
\begin{equation}\label{conv-f}
\bar f_\tau \to f \quad \mbox{ strongly in } \calV^*.
\end{equation}
\end{lem}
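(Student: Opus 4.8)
The plan is to regard the map $T_\tau\colon h\mapsto \bar h_\tau$, which sends $h\in\calVp=\LpsQstar$ to its time-averaging piecewise-constant interpolant $\bar h_\tau(x,t):=\tfrac1\tau\int_{(k-1)\tau}^{k\tau}h(x,s)\,\dis$ for $t\in((k-1)\tau,k\tau]$, as a \emph{linear} operator on $\calVp$, and to establish two facts: (i) $T_\tau$ is a contraction on $\calVp$, with a bound independent of $\tau$; (ii) $T_\tau g\to g$ strongly in $\calVp$ as $\tau\to0_+$ for every $g$ in a suitable dense subset $\calD$ of $\calVp$. Granting (i)--(ii), the claim \eqref{conv-f} follows from a standard three-$\epsilon$ argument: given $\epsilon>0$, choose $g\in\calD$ with $\|f-g\|_{\calVp}<\epsilon$; then $\|\bar f_\tau-f\|_{\calVp}\le\|T_\tau(f-g)\|_{\calVp}+\|T_\tau g-g\|_{\calVp}+\|g-f\|_{\calVp}\le 2\epsilon+\|T_\tau g-g\|_{\calVp}$, and the last term is $<\epsilon$ for $\tau$ small.

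For (i) I would argue directly on the modular. Since $\phj^*(x,\cdot)$ is even, convex, vanishes at $0$ and is non-decreasing on $[0,+\infty)$ (by \eqref{phi*} and Proposition~\ref{P:nondec}), Jensen's inequality with respect to the probability measure $\tau^{-1}\dis$ on $((k-1)\tau,k\tau]$ gives, for a.e.\ $x$ and $t\in((k-1)\tau,k\tau]$, $\phj^*(x,|\bar h_\tau(x,t)|)\le \tfrac1\tau\int_{(k-1)\tau}^{k\tau}\phj^*(x,|h(x,s)|)\,\dis$. Integrating in $x\in\Omega$ and in $t$ over the interval $((k-1)\tau,k\tau]$ (of length $\tau$, while the right-hand side is $t$-independent) and summing over $k=1,\dots,K$ yields $\Modud(\bar h_\tau)\le\Modud(h)$. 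Applying this with $h$ replaced by $h/\|h\|_{\calVp}$ and invoking the unit-ball property \eqref{unit:ball} for $\phj^*$ then gives $\|\bar h_\tau\|_{\calVp}\le\|h\|_{\calVp}$, i.e.\ (i).

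For (ii) I would take $\calD$ to be the set of bounded functions $g$ on $Q$ that are uniformly continuous in $t$ with a modulus of continuity $\omega$ independent of $x$. For such $g$ one has $|\bar g_\tau-g|\le\omega(\tau)$ pointwise on $Q$, hence $\Modud(\bar g_\tau-g)\le \iTo\phj^*(x,\omega(\tau))\,\dix\dit$; since $\phj^*(\cdot,1)\in L^1(\Omega)$ (Assumption~\ref{hp:alpha}(b)) and $\phj^*$ satisfies the $\Delta_2$-condition, $\phj^*(\cdot,M)\in L^1(\Omega)$ for every constant $M$, so dominated convergence forces $\Modud(\bar g_\tau-g)\to0$, and then $\|\bar g_\tau-g\|_{\calVp}\to0$ by the $\phj^*$-analogue of Lemma~\ref{L:equiv} (legitimate because $\phj^*$ is proper and locally integrable on $Q$ by Proposition~\ref{P:proper} and fulfils all the hypotheses of Assumption~\ref{hp:alpha} by Lemma~\ref{L:phj*}). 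Density of $\calD$ in $\calVp=E^{\phj^*}(Q)$ is then obtained in two steps: first, any $h\in\calVp$ is approximated in $\calVp$ by its truncations $h_n:=(-n)\vee h\wedge n$ (applying dominated convergence to $\Modud(\lambda(h-h_n))$ for every $\lambda>0$, then Lemma~\ref{L:equiv}); second, each bounded $h_n$ is approximated in $\calVp$ by its mollifications $h_n*_t\rho_\delta$ in the time variable, which lie in $\calD$ (bounded by $\|h_n\|_{L^\infty(Q)}$ and Lipschitz in $t$ with an $x$-independent constant) and converge to $h_n$ a.e.\ on $Q$ and boundedly, so that $\Modud(\lambda(h_n*_t\rho_\delta-h_n))\to0$ for every $\lambda>0$ again by dominated convergence.

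The structural heart of the proof is the contraction estimate (i), and that is elementary once one works on the modular rather than on the norm: it is simply Jensen's inequality. The only point requiring genuine care is (ii)—specifically the density of the elementary subclass $\calD$ in $\calVp$—but, as indicated, it reduces to truncation plus time-mollification together with dominated convergence on $\Modud$, the passage from modular to norm convergence being guaranteed by the $\Delta_2$-condition on $\phj^*$.
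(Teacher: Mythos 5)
Your argument is correct, and its structural heart --- Jensen's inequality yielding $\Modud(\bar h_\tau)\le\Modud(h)$ --- is exactly the paper's starting point (there stated as \eqref{intpl-est}). After that, your route diverges from the paper's in two respects that are worth noting. First, you convert the modular contraction into a genuine norm contraction $\|\bar h_\tau\|_{\calVp}\le\|h\|_{\calVp}$ via the unit-ball property, which lets you run a clean operator-theoretic three-$\epsilon$ argument; the paper never passes to the norm level for the operator $T_\tau$, instead keeping the estimate at the modular level and invoking the \emph{uniform-in-$\tau$} variant of Lemma~\ref{L:equiv} (this is precisely what that uniformity clause in the lemma was included for). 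Second, your dense class $\calD$ (bounded, uniformly $t$-continuous with $x$-independent modulus, obtained by truncation plus time-mollification) differs from the paper's, which takes the simple functions $S(Q)$ and gets density for free from \cite[Lemma~2.5.9]{DHHR} combined with $E^{\phj^*}(Q)=L^{\phj^*}(Q)$; the paper then handles the convergence $\overline{(f_n)}_\tau\to f_n$ for a fixed simple $f_n$ by dominated convergence just as you do for a fixed $g\in\calD$. Your approach buys a cleaner conceptual packaging (a uniformly bounded family of linear operators converging strongly to the identity on a dense set), at the modest cost of having to construct the density of $\calD$ by hand --- in particular, you should flag the routine but nonempty points that the time-mollification of $h_n$ (extended by zero outside $(0,T)$, say) is jointly measurable and converges a.e.\ on $Q$ via Fubini and Lebesgue differentiation --- whereas the paper piggybacks on a ready-made density statement. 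Both routes are sound.
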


\begin{proof}
By Jensen's inequality, we observe that
$$
\phj^*(x,f_k(x)) \leq \frac 1 \tau \int^{k\tau}_{(k-1)\tau} \phj^*(x, f(x,t)) \, \d t \quad \mbox{ for } \ k=1,2,\ldots,K,
$$
which implies
\begin{equation}\label{intpl-est}
\Modud(\bar f_\tau) \leq \Modud(f) \quad \mbox{ for } \ f \in \calV^*.
\end{equation}
Due to the local integrability of $\phj^*$, the set $S(Q)$ of simple functions is dense in $E^{\phj^*}(Q)$, which coincides with $L^{\phj^*}(Q)$ by virtue of the $\Delta_2$-condition for $\phj^*$ (see~\cite[Lemma 2.5.9]{DHHR} and \S \ref{Ss:MO}). Hence $f \in \calV^* \simeq L^{\phj^*}(Q)$ can be approximated by a sequence $(f_n)$ of simple functions such that
$$
\|f - f_n\|_{L^{\phj^*}(Q)} < \frac 1n.
$$
Furthermore, note as in \eqref{intpl-est} that
\begin{align*}
\Modud \left(\bar f_\tau - \overline{(f_n)}_\tau \right)
&= \Modud \left( \overline{(f - f_n)}_\tau\right)
\leq \Modud(f-f_n),
\end{align*}
which along with Lemma \ref{L:equiv} implies
$$
\bar f_\tau - \overline{(f_n)}_\tau \to 0 \ \mbox{ in } L^{\phj^*}(Q) \quad \mbox{ as } \ n \to +\infty
$$
uniformly for $\tau > 0$. Furthermore, noting that $f_n \in S(Q)$, with the aid of Lebesgue's dominated convergence theorem, one can verify that
$$
f_n - \overline{(f_n)}_\tau \to 0 \ \mbox{ in } L^{\phj^*}(Q) \quad \mbox{ as }  \ \tau \to 0_+.
$$
Indeed, it holds that $f_n - \overline{(f_n)}_\tau \to 0$ strongly in $L^r(Q)$ as 
$\tau \to  0_+ $ for any $r \in [1,+\infty)$ (see~\cite[Appendix]{AS}), and hence $f_n - \overline{(f_n)}_\tau \to 0$ 
a.e.~in $Q$, which implies that 
$\phj^*(x, \overline{(f_n)}_\tau(x,t)-f_n(x,t))  \to 0 $ for a.e.~$(x,t) \in Q$ as $\tau \to  0_+ $.
Moreover, note that $\phj^*(x,\overline{(f_n)}_\tau(x,t)-f_n(x,t)) \leq \phj^*(x, \|\overline{(f_n)}_\tau-f_n\|_{L^\infty(Q)}) \leq \phj^*(x,2\|f_n\|_{L^\infty(Q)}) \in L^1(Q)$ by (ii) of (b) in Assumption \ref{hp:alpha}. 
Combining all these facts, we finally obtain \eqref{conv-f}.
\end{proof}

\subsection{A priori estimates}

Test \eqref{disc-k} by $(u_k-u_{k-1})/\tau$ to see that
$$
\left\langle A\left( \dfrac{u_k-u_{k-1}}\tau \right), \dfrac{u_k-u_{k-1}}\tau \right\rangle + \left\langle \eta_k, \dfrac{u_k-u_{k-1}}\tau \right\rangle = \left\langle f_k, \dfrac{u_k-u_{k-1}}\tau \right\rangle.
$$
By the use of the Fenchel-Moreau identity as well as the definition of $\partial_\Omega \fhi$, we have
\begin{align}\label{disc-en}
\modu \left( \dfrac{u_k-u_{k-1}}\tau \right) + \frac{\fhi(u_k)-\fhi(u_{k-1})}\tau
\leq \left\langle f_k , \dfrac{u_k-u_{k-1}}\tau \right\rangle.
\end{align}
Multiplying both sides by $\tau$ and summing them up for $k=1,2,\ldots,K$, one obtains
\begin{align*}
 \int^T_0 \modu \left( \partial_t u_\tau(t) \right) \, \d t + \fhi(u_K)
 \leq \fhi(u_0) + \|\bar f_\tau\|_{\calV^*} \|\partial_t u_\tau\|_{\calV}.
\end{align*}
Hence, thanks to Lemma \ref{lemma:1b}, it follows that
\begin{equation}\label{e:u_t}
\|\partial_t u_\tau\|_{\calV} \leq C.
\end{equation}
The boundedness of $\calA:\calV \to \calV^*$ (see Lemma \ref{L:A-conti}) yields
\begin{equation*}
\|\calA(\partial_t u_\tau)\|_{\calV^*} \leq C.
\end{equation*}
Hence recalling \eqref{aprx-EQ}, we observe that
\begin{equation}\label{e:eta}
\|\eta_\tau\|_{\calV^*} \leq C. 
\end{equation}
Furthermore, since $\calV \hookrightarrow L^1(0,T;V)$ continuously, we derive from \eqref{e:u_t} that
$$
\sup_{t \in [0,T]} \|u_\tau(t)\|_V \leq C,
$$
which together with the definitions of interpolants implies
$$
\sup_{t \in [0,T]} \|\bar u_\tau(t)\|_V \leq C.
$$
Summing up \eqref{disc-en} again 
(now for $k \in \{1,2,\ldots,m\}$, where $1\le m\le K$), we can deduce that
$$
\max_{k \in \{1,2,\ldots,K\}} \fhi(u_k) \leq C,
$$
that is,
$$
\sup_{t \in [0,T]} \fhi(\bar u_\tau(t)) \leq C.
$$
Moreover, by convexity, it follows that
$$
\sup_{t \in [0,T]} \fhi(u_\tau(t)) \leq C.
$$
Therefore using (b) of Assumption \ref{hp:B} and Lemma \ref{L:bdd-equiv}, one finds that
\begin{equation}
\sup_{t \in [0,T]} \left( \|\bar u_\tau(t)\|_X + \| u_\tau(t)\|_X \right) \leq C.
\label{e:uX}
\end{equation}

\subsection{Convergence}
The a priori estimates established so far enable us to pass to the limit as  $K\to+\infty$
(or, equivalently, $\tau\to 0_+$: both notations will be used on occurrence). 
Indeed, due to the reflexivity of $V$ and of $\calV$, the estimates above imply that, for a (not relabeled)
subsequence of $(n)$, the following convergence relations hold true:
\begin{alignat}{4}
  \bar u_\tau  &\to \bar u \quad &&  \mbox{weakly in }\,L^\infty(0,T;V),\nonumber\\
\label{cof:12}
\partial_t u_\tau &\to \partial_t u \quad &&\mbox{ weakly in }\,\calV,\\
\no 
\calA(\partial_t u_\tau) &\to \xi \quad &&\mbox{ weakly in }\,\calVp
\end{alignat}
for some limits $u, \bar u \in \calV$ and $\xi \in \calV^*$. One can prove in a standard manner that 
$u = \bar u$ by using the a priori estimate for $(\partial_t u_\tau)$ (see, e.g.,~\cite{AS}
 for details). Comparing terms of \eqref{aprx-EQ}, we can then deduce that
\begin{equation}\label{cof:14}
   \eta_\tau \to f - \xi =: \eta \quad \quext{weakly in }\,\calVp,
\end{equation}
whence the following relation holds: 
\begin{equation}\label{eq:limf}
  \xi + \eta = f \quext{in }\,\calVp.
\end{equation}
Hence, to complete the proof, we need to identify the limits
$\xi$ and $\eta$. Thanks to \eqref{e:u_t} and \eqref{e:uX}, applying Lemma~\ref{thm:AL} 
to $L=(u_\tau)$, we can deduce that
\begin{equation}\label{strong:un}
  u_\tau \to u \quad \mbox{ strongly in }  C^0([0,T];V),
\end{equation}
and in particular,
\begin{equation*}
  u_\tau \to u \quad \mbox{ strongly in } \calV,
\end{equation*}
which along with the a priori estimate for $(\partial_t u_\tau)$ enables us to prove that
$$
\bar u_\tau \to u \quad \mbox{ strongly in } \calV.
$$
Combining this fact with \eqref{cof:14} and applying the 
maximal monotonicity (more precisely, demiclosedness) of $\calB:\calV \to 2^{\calVp}$ we can conclude that
\begin{equation*}
u \in D(\calB), \quad  \eta \in \calB(u) \ \mbox{ in }\,\calVp.
\end{equation*}
Thus we have identified the limit $\eta$.

Finally, in order to identify $\xi$, we test \eqref{disc-k}
by $(u_k-u_{k-1})/\tau$ and integrate it in time to obtain
\begin{equation*}
  \duaV{\calA(\partial_t u_\tau),\partial_t u_\tau}
   \leq \duaV{\bar f_\tau,\partial_t u_\tau} - \fhi(\bar u_\tau(T)) + \fhi(u_0).
\end{equation*}
Then, taking the  supremum limit  as $\tau \to 0_+$ and using \eqref{conv-f} and \eqref{cof:12},
we infer that
\begin{equation*}
  \limsup_{\tau \to 0_+} \duaV{\calA(\partial_t u_\tau),u_\tau} 
   \leq \duaV{f,\partial_t u} - \liminf_{n\to+\infty} \fhi(\bar u_\tau(T)) + \fhi(u_0).
\end{equation*}
Then, using \eqref{strong:un} with the lower semicontinuity of $\fhi$ in $V$, 
 noting also that $\bar u_\tau(T) = u_K = u_\tau(T) \to u(T)$ strongly in $V$, 
we get 
\begin{equation}\label{form:16}
  \limsup_{\tau \to 0_+} \duaV{\calA(\partial_t u_\tau),\partial_t u_\tau} 
   \le \duaV{ f, \partial_t u } - \fhi(u(T)) + \fhi(u_0).
\end{equation}
On the other hand, since $\eta(t) \in B(u(t))$ for a.e.~$t \in (0,T)$,
with the aid of the chain-rule formula developed in Theorem \ref{teo:chain}, we can deduce that
\begin{equation}\label{form:17}
  - \fhi(u(T)) + \fhi(u_0) 
   = - \int_0^T \duav{\eta(\tau),\partial_t u(\tau)}\,\d \tau
   = - \duaV{\eta,\partial_t u}.
\end{equation}
Hence, by virtue of \eqref{eq:limf}, it follows from \eqref{form:16} that
\begin{equation*}
  \limsup_{\tau \to 0_+} \duaV{\calA(\partial_t u_\tau),\partial_t u_\tau}
   \le \duaV{f,\partial_t u} - \duaV{\eta,\partial_t u}
   = \duaV{\xi,\partial_t u},
\end{equation*}
whence, by the maximal monotonicity of $\calA:\calV\to\calVp$, we finally obtain 
$$
\partial_t u \in D(\calA), \quad \xi=\calA(\partial_t u) \ \mbox{ in } \calV^*.
$$
Finally, we remark that the weak continuity property $u \in C_w([0,T];X)$ in \eqref{rego:u} follows in a standard way by combining the information
contained in \eqref{e:uX} and in \eqref{strong:un} (see~\cite[Lemma 8.1]{LM1}). This concludes the proof. \qed 

\section{Generalization}\label{sec:gen}

This short section is devoted to a generalization of Theorem \ref{teo:main}. 
Namely, we are concerned with the doubly-nonlinear inclusion,
\begin{equation}\label{gen-eq}
 M(u_t(\cdot,t)) + B(u(\cdot,t)) \ni f(\cdot,t) \ \mbox{ in } V^*, \quad 0 < t < T,
\end{equation}
where $B : V \to 2^{V^*}$ fulfills Assumption \ref{hp:B} and $M : V \to 2^{V^*}$ satisfies the following
\begin{assu}[Hypotheses on $M$]\label{hp:M}{\rm
Let $M : V \to 2^{V^*}$ be maximal monotone with domain $D(M) = V$. In addition, the following (i) and (ii) hold:
\begin{enumerate}
 \item[(i)] There exist constants $\alpha_0 > 0$ and $C_1 \geq 0$ such that
$$
\alpha_0 \modu(u) \leq \langle m, u \rangle + C_1 \quad \mbox{ for } \ u \in V, \ m \in M(u).
$$ 
 \item[(ii)] There exists a constant $C_2 \geq 0$ such that
$$
\modud(m) \leq C_2 \left( \modu(u) + 1 \right)  \quad \mbox{ for } \ u \in V, \ m \in M(u).
$$
\end{enumerate}
}\end{assu}

Now, our result reads,
\begin{thm}[Generalization]\label{T:gen}
Let {\rm Assumptions \ref{hp:alpha}, \ref{hp:B}} and {\rm \ref{hp:M}} hold. Then for any $f$ 
and $u_0$ satisfying \eqref{hp:u0} and \eqref{hp:f}, the Cauchy problem for \eqref{gen-eq} 
along with the initial condition $u|_{t=0}=u_0$ admits at least one solution $u$ enjoying 
the regularity \eqref{rego:u} and satisfying 
the energy identity \eqref{energy} with $\alpha(\cdot,u_t)$ replaced by a section $m \in M(u_t)$.
\end{thm}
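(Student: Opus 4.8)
The plan is to reduce Theorem~\ref{T:gen} to the framework already developed for Theorem~\ref{teo:main} by approximating the general maximal monotone operator $M$ through operators of the form $M+\lambda A$ (or, more precisely, by a Yosida-type device), for which the existence result already proved applies almost verbatim. First I would observe that the structure conditions (i) and (ii) in Assumption~\ref{hp:M} are the exact modular-analogues of the growth condition \eqref{hp:cv} from \cite{Colli}, so that $M$ behaves, with respect to the modular $\modu$, much like the operator $A=\partial_\Omega\modu$ did; in particular, \eqref{e:u_t}-type a~priori estimates will follow from (i) and \eqref{e:eta}-type estimates from (ii) together with Lemma~\ref{lemma:1b}. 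The time-dependent counterpart $\calM:\calV\to2^{\calVp}$ is defined pointwise by $m\in\calM(u)\Leftrightarrow m(t)\in M(u(t))$ for a.e.\ $t$, and one checks, exactly as in Lemma~\ref{lemma:3} (using Remark~\ref{mamo:vec} and Lemma~\ref{lemma:3a} with $A$ replaced by $M$, which is licit since $M$ is maximal monotone with full domain), that $\calM$ is maximal monotone and agrees with $\partial_Q$ applied to the time-integrated functional associated with $M$ when $M$ itself is a subdifferential — but more importantly that $\calM$ shares the demiclosedness needed to pass to the limit.

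The core of the proof is the time-discretization. Fixing $\tau=T/K$ and $f_k$ as in \S\ref{sec:proof}, I would solve, for $k=1,\dots,K$, the inclusion
\begin{equation*}
  M\!\left(\frac{u_k-u_{k-1}}{\tau}\right)+B(u_k)\ni f_k\quad\text{in }\Vp.
\end{equation*}
Solvability follows because the map $w\mapsto M((w-u_{k-1})/\tau)+B(w)$ is, after composing with the change of variables $v=(w-u_{k-1})/\tau$, a sum of two maximal monotone operators one of which ($B=\partial_\Omega\fhi$) is a subdifferential and the other ($v\mapsto M(v)$, with $D(M)=V$) is everywhere defined, demicontinuous-in-the-resolvent and coercive thanks to Assumption~\ref{hp:M}(i) and the coercivity of $\modu$ (Lemma~\ref{lemma:1b}); hence their sum is surjective by the standard perturbation theorem (e.g.~\cite[Ch.~II]{barbu}), exactly as \eqref{disc-k} was handled via Theorem~\ref{thm:mamo}. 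Uniqueness of $u_k$ is not needed; any selection works. Defining the interpolants $u_\tau$, $\bar u_\tau$, $\bar f_\tau$ as before and choosing $m_\tau(t)\in M(\partial_t u_\tau(t))$ with $\eta_\tau:=\bar f_\tau-m_\tau\in\calB(\bar u_\tau)$, testing the discrete equation by $(u_k-u_{k-1})/\tau$ and using the Fenchel--Moreau identity for $\fhi$ together with Assumption~\ref{hp:M}(i) yields
\begin{equation*}
  \alpha_0\int_0^T\modu(\partial_t u_\tau)\,\d t+\fhi(u_K)\le \fhi(u_0)+\|\bar f_\tau\|_{\calVp}\|\partial_t u_\tau\|_{\calV}+C_1T,
\end{equation*}
so Lemma~\ref{lemma:1b} gives $\|\partial_t u_\tau\|_{\calV}\le C$; then Assumption~\ref{hp:M}(ii) with Lemma~\ref{L:bdd-equiv} bounds $m_\tau$ in $\calVp$, the equation bounds $\eta_\tau$ in $\calVp$, and the rest of the a~priori estimates (\eqref{e:uX} via Assumption~\ref{hp:B}(b)) goes through unchanged.

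For the passage to the limit I would extract weak limits $\partial_t u_\tau\rightharpoonup\partial_t u$ in $\calV$, $m_\tau\rightharpoonup\xi$ and $\eta_\tau\rightharpoonup\eta$ in $\calVp$ with $\xi+\eta=f$, and strong convergence $u_\tau\to u$ in $C^0([0,T];V)$ (hence in $\calV$) and $\bar u_\tau\to u$ in $\calV$ via the Aubin--Lions lemma of Theorem~\ref{thm:AL}; demiclosedness of $\calB$ identifies $\eta\in\calB(u)$. The identification $\xi\in\calM(\partial_t u)$ is the delicate point and the main obstacle: unlike $\calA$, the operator $\calM$ is not a gradient, so I cannot use a chain rule for $M$. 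Instead I would run the same $\limsup$ argument as in \eqref{form:16}--\eqref{form:17}, which still applies because the chain rule is only needed for $\fhi$ (Theorem~\ref{teo:chain}): testing the discrete equation, integrating, and using lower semicontinuity of $\fhi$ gives
\begin{equation*}
  \limsup_{\tau\to0_+}\duaV{m_\tau,\partial_t u_\tau}\le\duaV{f,\partial_t u}-\fhi(u(T))+\fhi(u_0)=\duaV{f-\eta,\partial_t u}=\duaV{\xi,\partial_t u},
\end{equation*}
whence $\xi\in\calM(\partial_t u)$ by maximal monotonicity (demiclosedness) of $\calM:\calV\to2^{\calVp}$. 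The energy identity \eqref{energy} with $\alpha(\cdot,u_t)$ replaced by $m\in M(u_t)$ then follows by combining $\xi+\eta=f$ with the chain-rule formula \eqref{chain} for $\fhi$, exactly as in the last lines of \S\ref{sec:proof}, and the regularity \eqref{rego:u} is obtained from \eqref{e:uX} and the strong $C^0([0,T];V)$ convergence as before. The only genuinely new verification is that $\calM$ inherits demiclosedness and the boundedness property from $M$ under Assumption~\ref{hp:M}, which is a routine adaptation of Lemmas~\ref{lem:sing}--\ref{lemma:3} and Remark~\ref{mamo:vec}.
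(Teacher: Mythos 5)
Your proposal is correct and takes essentially the same route as the paper: time-discretization solved via the surjectivity of $M+B$ (using $D(M)=V$, maximality of the sum, and the coercivity coming from Assumption~\ref{hp:M}(i) together with Lemma~\ref{lemma:1b}), a priori estimates derived from Assumption~\ref{hp:M}(i)--(ii) via Lemmas~\ref{lemma:1b} and \ref{L:bdd-equiv}, and identification of the weak limits through the chain rule for $\fhi$ (Theorem~\ref{teo:chain}) combined with the maximal monotonicity of the pointwise lifting $\calM$, whose maximality is established exactly as in Lemmas~\ref{lemma:3} and \ref{lemma:3a}. The opening sentence floating a regularization of $M$ by a Yosida-type device is not pursued in the rest of the argument and is unnecessary.
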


The theorem mentioned above can be regarded as a generalization of the results in~\cite{Colli,CV} 
to the Musielak-Orlicz setting.

\begin{proof}
We shall modify the proof of Theorem \ref{teo:main} given in Section \ref{sec:proof}. 
A first modification is made for approximation: we also introduce a similar discretization to \eqref{disc-k}, that is,
\begin{equation}\label{disc-gen}
M \left( \frac{u_k-u_{k-1}}\tau \right) + B(u_k) \ni f_k \ \mbox{ in } V^*
\end{equation}
for $k = 1,\ldots,K$. It has no longer variational structure; however, 
it follows from (i) of Assumption \ref{hp:M} along with \eqref{bounded2} that
$M$ is coercive, i.e.,
$$
\lim_{\|u\|_V \to +\infty} \frac{\langle m, u \rangle}{\|u\|_V} = +\infty
\quad \mbox{ for } \ m \in M(u).
$$
Moreover, the sum $M + B$ turns out to be maximal monotone in $V \times V^*$ due to the fact that $D(M)=V$ (see~\cite{BCP},~\cite[Chap.\,II, Theorem 1.7]{barbu}). Combining all these facts, one can verify 
that $M+B$ is surjective from $V$ to $V^*$. Hence one can assure the existence of a solution 
$u_k \in D(B)$ to \eqref{disc-gen} for $k=1,\ldots,K$.

Thanks to Assumption \ref{hp:M}, a priori estimates can be established similarly as before. Indeed, \eqref{disc-en} will be modified as
\begin{align*}
\alpha_0 \modu \left( \dfrac{u_k-u_{k-1}}\tau \right) + \frac{\fhi(u_k)-\fhi(u_{k-1})}\tau
\leq \left\langle f_k, \dfrac{u_k-u_{k-1}}\tau \right\rangle + C_1.
\end{align*}
Hence \eqref{e:u_t} follows by the use of Lemma \ref{lemma:1b}. Moreover, (ii) of Assumption \ref{hp:M}
together with Lemma \ref{L:bdd-equiv} implies that, for any strongly measurable $m : (0,T) \to V$ such that $m(\cdot) \in M(\partial_t u_\tau(\cdot))$ a.e.~in $(0,T)$, there holds
$$
 m\in \calV^*, \qquad \|m\|_{\calV^*} \leq C, 
$$
with $C>0$ depending only on the parameters of the system. In particular, $m_\tau(\cdot) := \bar f_\tau(\cdot) - \eta_\tau(\cdot) \in M(\partial_t u_\tau(\cdot))$
can be estimated in $\calV^*$ uniformly with respect to~$\tau$. 
Hence recalling \eqref{disc-gen}, we get \eqref{e:eta}. Furthermore, all the other uniform 
estimates follow similarly. As for the final step for convergence, we introduce a monotone 
operator $\calM : \calV \to 2^{\calV^*}$ defined as follows: for $u \in \calV$ and $f \in \calV^*$,
$$
[u,f] \in \calM \quad \stackrel{\text{define}}\Leftrightarrow \quad
[u(t), f(t)] \in M \ \mbox{ for a.e. } t \in (0,T).
$$
To complete the proof, we need the maximality of $\calM$ in $\calV \times \calV^*$, which 
enables us to identify the weak limit of $m_\tau \in \calM(\partial_t u_\tau)$ 
as $\tau \to 0_+$ as in the proof of Theorem \ref{teo:main}. Indeed, the maximality can be 
proved as in the proof of Lemma \ref{lemma:3}, where the demiclosedness of the operator 
$T := (A + M)^{-1}$ is needed (see Lemma \ref{lemma:3a}).
\end{proof}

\section{Applications}
\label{sec:ex}

In this section, we shall present a number of concrete doubly nonlinear PDEs to which 
the abstract results developed so far can be applied. To do so, the following lemma is needed 
(see~\cite[Theorem 2.8.1]{DHHR} and~\cite{Mu83}):
\begin{lem}[Embeddings among Musielak-Orlicz spaces]\label{L:MO-embd}
Let $\Omega \subset \R^N$ be a bounded open set and let $\theta$ and $\phj$ be generalized $\Phi$-functions in $\Omega$. Then the embedding
$$
L^\theta(\Omega) \hookrightarrow L^\phj(\Omega)
$$
is continuous if and only if there exist a constant $\lam_0>0$ and a function $h \in L^1(\Omega)$ satisfying $\|h\|_{L^1(\Omega)} \leq 1$ such that
\begin{equation}\label{MO-embd}
\phj(x,\lam_0 r) \leq \theta(x,r) + h(x)
 \quad \mbox{ for a.e.\,} \ x \in \Omega \ \mbox{ and every } \ r \geq 0. 
\end{equation}
\end{lem}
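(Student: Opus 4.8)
\textbf{Plan for the proof of Lemma \ref{L:MO-embd}.}
The statement is a characterization of the continuous embedding $L^\theta(\Omega)\hookrightarrow L^\phj(\Omega)$ between two Musielak-Orlicz spaces over a bounded domain, and the natural strategy is to establish the two implications separately, passing through modular estimates via the unit-ball property and Luxemburg norms. The key translation throughout is: a continuous embedding amounts to a bound $\|v\|_{L^\phj(\Omega)}\le C\|v\|_{L^\theta(\Omega)}$, and, by the unit-ball property (see \eqref{unit:ball}) together with homogeneity of the norm, this is equivalent to controlling the $\varrho_{\phj,\Omega}$-modular on the $\varrho_{\theta,\Omega}$-unit ball.

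\emph{Sufficiency.} Assume \eqref{MO-embd} holds with $\lambda_0>0$ and $h\in L^1(\Omega)$, $\|h\|_{L^1(\Omega)}\le 1$. Let $v\in L^\theta(\Omega)$ with $\|v\|_{L^\theta(\Omega)}\le 1$, so that by the unit-ball property $\varrho_{\theta,\Omega}(v)\le 1$. Integrating \eqref{MO-embd} applied to $r=|v(x)|$ gives
$$
\varrho_{\phj,\Omega}(\lambda_0 v)=\io \phj(x,\lambda_0|v(x)|)\,\dix \le \io \theta(x,|v(x)|)\,\dix + \io h(x)\,\dix \le 1+1 = 2.
$$
Using convexity of $\phj(x,\cdot)$ with $\phj(x,0)=0$ (Proposition \ref{P:nondec}) one gets $\varrho_{\phj,\Omega}(\tfrac{\lambda_0}{2} v)\le \tfrac12 \varrho_{\phj,\Omega}(\lambda_0 v)\le 1$, hence $\|v\|_{L^\phj(\Omega)}\le 2/\lambda_0$ by definition of the Luxemburg norm and the left-continuity axiom. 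Scaling, $\|v\|_{L^\phj(\Omega)}\le (2/\lambda_0)\|v\|_{L^\theta(\Omega)}$ for all $v\in L^\theta(\Omega)$, which is the continuous embedding.

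\emph{Necessity.} Conversely, suppose $\|v\|_{L^\phj(\Omega)}\le C\|v\|_{L^\theta(\Omega)}$ for all $v$, and fix $\lambda_0:=1/(2C)$ (say). The plan is to show that the set function
$$
h(x):=\sup_{r\ge 0}\Big(\phj(x,\lambda_0 r)-\theta(x,r)\Big)_+
$$
is measurable and lies in $L^1(\Omega)$ with $\|h\|_{L^1(\Omega)}\le 1$, since then \eqref{MO-embd} holds by construction. Measurability of the supremum is obtained by restricting $r$ to the rationals, using left-continuity (or continuity, here) of $\phj(x,\cdot)$ and $\theta(x,\cdot)$ in $r$. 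For the integrability bound one argues by contradiction: if $\io h\,\dix>1$, one can, by an exhaustion/partition argument, select a measurable partition of (a subset of) $\Omega$ and a measurable function $r(x)$ so that the test function $v(x)=r(x)$ satisfies $\varrho_{\theta,\Omega}(v)\le 1$ (hence $\|v\|_{L^\theta(\Omega)}\le 1$) while $\varrho_{\phj,\Omega}(\lambda_0 v)>1$ on that subset — forcing $\|v\|_{L^\phj(\Omega)}>1/\lambda_0 = 2C > C\ge C\|v\|_{L^\theta(\Omega)}$, contradicting the embedding inequality. Rather than reconstructing this machinery in full, I would simply cite \cite[Theorem 2.8.1]{DHHR} (and \cite{Mu83}) for this direction, since it is a standard result in the theory of Musielak-Orlicz spaces; the sufficiency direction above is the only part one genuinely needs to spell out for the applications in this section.

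\emph{Main obstacle.} The delicate point is the necessity direction: building the extremal test function $v$ and keeping simultaneous control of \emph{two} modulars requires a careful measurable-selection argument (ensuring $\varrho_{\theta,\Omega}(v)\le 1$ exactly while the $\phj$-modular of $\lambda_0 v$ exceeds $1$), together with the measurability of the pointwise supremum defining $h$. Since this is precisely \cite[Theorem 2.8.1]{DHHR}, I would defer to that reference for the technical construction and present only the (short) sufficiency argument in detail, as that is what is invoked in the examples.
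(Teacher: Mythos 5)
The paper provides no proof for this lemma at all: it is simply quoted from \cite[Theorem 2.8.1]{DHHR} (see also Musielak's monograph \cite{Mu83}), so there is strictly speaking no "paper proof" to compare against. Your proposal is therefore more informative than what is in the paper, and the sufficiency argument you spell out is correct: from \eqref{MO-embd} and the unit-ball property one gets $\varrho_{\phj,\Omega}(\lambda_0 v)\le \varrho_{\theta,\Omega}(v)+\|h\|_{L^1}\le 2$ whenever $\|v\|_{L^\theta(\Omega)}\le 1$, and then convexity of $\phj(x,\cdot)$ together with $\phj(x,0)=0$ gives $\varrho_{\phj,\Omega}(\tfrac{\lambda_0}{2}v)\le\tfrac12\varrho_{\phj,\Omega}(\lambda_0 v)\le 1$, hence $\|v\|_{L^\phj(\Omega)}\le 2/\lambda_0$; homogeneity then yields $\|v\|_{L^\phj(\Omega)}\le(2/\lambda_0)\|v\|_{L^\theta(\Omega)}$ for all $v$. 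Two small remarks: the step $\phj(x,s/2)\le\tfrac12\phj(x,s)$ is a direct consequence of convexity plus $\phj(x,0)=0$, so citing Proposition \ref{P:nondec} (which is about monotonicity in $\lambda$) is a slight misattribution, and the "left-continuity axiom" is not needed in the last step since the Luxemburg norm is defined as an infimum. For the necessity direction you rightly defer to \cite[Theorem 2.8.1]{DHHR}, exactly as the paper does; your sketch of the measurable-selection obstacle is accurate, so the overall treatment is consistent with, and a bit more explicit than, the paper.
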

In what follows, we shall consider $\alpha = \partial \phj: \Omega \times \R \to \R$ satisfying Assumption \ref{hp:alpha}. 
\begin{rem}[Examples of $\phj$]
A simple example may be 
$$
\phj(x,r) = \frac 1 {p(x)} |r|^{p(x)} \quad \mbox{ for } \ x \in \Omega \ \mbox{ and } \ r \in \R,
$$
which corresponds to the $L^{p(x)}(\Omega)$-space setting and falls within the framework developed in~\cite{AS}. As for more Musielak-Orlicz-type examples, we may consider
\begin{align*}
 \phj(x,r) = \frac 1 {p(x)} |r|^{p(x)} \left[ \log (1+|r|) \right]^{q(x)} \quad
  \mbox{ for } \ x \in \Omega \ \mbox{ and } \ r \in \R,
\end{align*}
with two measurable functions $p,q : \Omega \to \R$ satisfying 
\begin{equation*}
1 < p_- := \essinf_{x\in \Omega} p(x) \leq p_+ := \esssup_{x\in \Omega} p(x) < +\infty, \quad 
 1 \leq q_- \leq q_+ < +\infty.
\end{equation*}
Furthermore, noting that $\log(1+r) \leq \log(1+2r) \leq 2 \log(1+r)$ for $r \geq 0$, 
one can check that
$$
2^{p_-} \phj(x,r) \leq \phj(x,2r) \leq 2^{p_+ + q_+} \phj(x,r) \quad \mbox{ for } \ x \in \Omega
\ \mbox{ and } \ r \in \R.
$$
Since $p_+,q_+ < +\infty$ and $p_- > 1$, it follows from Proposition \ref{L:Del2-inv} that $\phj$ and $\phj^*$ 
fulfill the $\Delta_2$-condition. All the other conditions of Assumption \ref{hp:alpha} can be checked easily.
Similarly, one may also consider the multi-log nonlinearity, e.g., 
$$
\phj(x,r) = \frac 1 {p(x)} |r|^{p(x)} \left[ \log (1+|r|) \right]^{q(x)} [\log \log \e(1+|r|)]^{s(x)} 
\quad \mbox{ for } \ x \in \Omega \ \mbox{ and } \ r \in \R
$$
under natural assumptions on the variable exponents $p(\cdot)$, $q(\cdot)$, $s(\cdot)$, or 
the $(p(x),q(x))$-nonlinearity such as
$$
\phj(x,r) = \frac{\alpha(x)}{p(x)} |r|^{p(x)} + \frac{\beta(x)}{q(x)} |r|^{q(x)}
\quad \mbox{ for } \ x \in \Omega \ \mbox{ and } \ r \in \R
$$ 
for bounded measurable functions $\alpha, \beta : \Omega \to [0,+\infty)$ 
satisfying $\alpha + \beta \geq \delta$ a.e.~in $\Omega$ for some constant $\delta > 0$.
\end{rem}
We are now concerned with the following Cauchy-Dirichlet problem:
\begin{alignat}{4}
\alpha(x,u_t(x,t)) - \Delta_{m(x)} u(x,t) &= f(x,t) \quad &&\mbox{ for } \ x \in \Omega, \ t > 0,\label{pde1}\\
u(x,t) &= 0 \quad &&\mbox{ for } \ x \in \partial \Omega, \ t > 0,\label{bc1}\\
u(x,0) &= u_0(x) \quad &&\mbox{ for } \ x \in \Omega,\label{ic1}
\end{alignat}
where $\Delta_{m(x)}$ stands for the so-called \emph{$m(x)$-Laplace operator} given by
$$
\Delta_{m(x)} w(x) := \mathrm{div} \, \left(|\nabla w(x)|^{m(x)-2} \nabla w(x)\right)
$$
with a measurable function $m = m(x) : \Omega \to (1,+\infty)$ satisfying
\begin{equation}\label{p-bound}
1 < m^- :=  \essinf_{x \in \Omega}  m(x) \leq m^+ :=  \esssup_{x \in \Omega}  m(x) < +\infty
\end{equation}
and
\begin{equation}\label{log-hoelder}
 |m(x) - m(x')| \leq \frac{A}{\log(\e + 1/|x-x'|)}
  \quad \mbox{ for all } x,x' \in \Omega
\end{equation}
for some constant $A>0$. Here \eqref{log-hoelder} is called a \emph{logarithmic H\"older continuity} 
of the variable exponent $m(\cdot)$. Then, we set
$$
X = W^{1,m(x)}_0(\Omega), \quad \fhi(w) := \begin{cases}
					    \int_\Omega \frac 1 {m(x)} |\nabla w(x)|^{m(x)} \, \d x &
					    \mbox{ if } \ w \in W^{1,m(x)}_0(\Omega),\\
+\infty &\mbox{ otherwise},
					   \end{cases}
$$
where $W^{1,m(x)}_0(\Omega)$ is the closure of $C^\infty_c(\Omega)$ in $W^{1,m(x)}(\Omega)$ and 
$W^{1,m(x)}(\Omega)$ is a variable exponent Sobolev space (see~\cite{DHHR}) presented as a 
Musielak-Orlicz-Sobolev space with the choice of the generalized $\Phi$-function,
$$
\theta(x,r) := \frac 1 {m(x)} |r|^{m(x)}
$$
(see~\cite{DHHR} for more details), and equipped with the norm
$$
\|u\|_X := \||\nabla u|\|_{L^{m(x)}(\Omega)} \quad \mbox{ for } \ u \in X
$$
(see also~\cite[(a) of Theorem 8.2.4]{DHHR} for the Poincar\'e inequality). 
Then $\partial_\Omega \fhi(w)$ coincides with the extension onto 
$V = L^\phj(\Omega)$ of $-\Delta_{m(x)} w$ defined in the distributional sense. 
Thus \eqref{pde1}--\eqref{ic1} is reduced to the Cauchy problem for \eqref{eqn}. 
Moreover, $B = \partial_\Omega \fhi$ fulfills Assumption \ref{hp:B}, 
whenever $\varphi(x,r)$ defined above satisfies the assumption,
\begin{equation}\label{m(x)-hyp}
\phj(x,\lam_0 r) \leq \frac 1 {m^*(x)-\vep} |r|^{m^*(x)-\vep} + h(x)
\quad \mbox{ for } \ x \in \Omega \ \mbox{ and } \ r \geq 0,
\end{equation}
where $m^*(x) := Nm(x)/(N-m(x))_+$ is a counterpart of the \emph{Sobolev critical exponent}, 
for some $\lam_0 > 0$, $\vep > 0$ and $h \in L^1(\Omega)$ satisfying $\|h\|_{L^1(\Omega)} \leq 1$.
Indeed, under the logarithmic H\"older continuity of $m(\cdot)$, $W^{1,m(x)}_0(\Omega)$ is 
compactly embedded in $L^{m^*(x)-\vep}(\Omega)$ for $\vep > 0$, and hence, thanks to 
Lemma \ref{L:MO-embd} along with \eqref{m(x)-hyp}, we can deduce that 
$W^{1,m(x)}_0(\Omega)$ is compactly embedded in $V = L^\phj(\Omega)$. 
Hence (b) of Assumption \ref{hp:B} follows. 

\begin{rem}
The assumption \eqref{m(x)-hyp} may be relaxed by using the results in~\cite{KS,MOSS}.
\end{rem}
All the other conditions can be checked easily. Therefore our result reads,

\begin{thm}[Existence of strong solution to \eqref{pde1}--\eqref{ic1}]\label{T:ap1}
Suppose that the function $\alpha(x,t)$ satisfies {\rm Assumption \ref{hp:alpha}} and 
that $m(\cdot)$ fulfills \eqref{p-bound}, \eqref{log-hoelder} and \eqref{m(x)-hyp}. 
Then for any $f \in \calV^*$ and $u_0 \in W^{1,m(x)}_0(\Omega)$, the Cauchy-Dirichlet problem \eqref{pde1}--\eqref{ic1} 
admits at least one strong solution $u \in L^\phj(Q)$ in the sense of Theorem {\rm \ref{teo:main}}.
\end{thm}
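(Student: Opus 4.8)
The plan is to derive Theorem~\ref{T:ap1} directly from the abstract existence result Theorem~\ref{teo:main}, by checking that the concrete data of \eqref{pde1}--\eqref{ic1} fit its hypotheses. Since Assumption~\ref{hp:alpha} on $\alpha=\partial\phj$ is assumed outright, and $u_0\in W^{1,m(x)}_0(\Omega)=D(\fhi)$ together with $f\in\calVp$ supply \eqref{hp:u0}--\eqref{hp:f}, the only substantial work is to verify Assumption~\ref{hp:B} for $\fhi(w)=\int_\Omega\frac1{m(x)}|\nabla w|^{m(x)}\,\dix$ (with value $+\infty$ off $W^{1,m(x)}_0(\Omega)$) and $B=\partial_\Omega\fhi$ on $V=\Lpso$, and then to identify $B$ with the $V$-realization of $-\Delta_{m(x)}$ under homogeneous Dirichlet conditions.

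First I would dispatch part~(a) of Assumption~\ref{hp:B}. Convexity of $\fhi$ is immediate from convexity of $r\mapsto|r|^{m(x)}$, and $\fhi(0)=0$ gives $D(\fhi)\neq\emptyset$. For lower semicontinuity on $V$, take $u_n\to u$ in $\Lpso$ with $\liminf_n\fhi(u_n)<+\infty$; along a subsequence on which $\fhi(u_n)$ converges and stays bounded, the bound $\int_\Omega\frac1{m(x)}|\nabla u_n|^{m(x)}\,\dix\le C$ forces $(u_n)$ to be bounded in $X=W^{1,m(x)}_0(\Omega)$ (by the modular--norm equivalence in $L^{m(x)}(\Omega)$, valid because $1<m^-\le m^+<+\infty$), so up to a further subsequence $u_n\rightharpoonup v$ weakly in $X$; since $X\hookrightarrow\Lpso$ continuously the limits agree, $v=u$, and weak lower semicontinuity in $X$ of the convex, strongly continuous functional $w\mapsto\int_\Omega\frac1{m(x)}|\nabla w|^{m(x)}\,\dix$ (equivalently Mazur's lemma plus Fatou) yields $\fhi(u)\le\liminf_n\fhi(u_n)$.

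Next I would verify part~(b). The inclusion $D(\fhi)\subset X$ holds by construction. To see $X$ is compactly embedded into $V$, recall that under \eqref{p-bound} and the logarithmic H\"older continuity \eqref{log-hoelder} one has the compact embedding $W^{1,m(x)}_0(\Omega)\hookrightarrow\hookrightarrow L^{m^*(x)-\vep}(\Omega)$ for small $\vep>0$, while Lemma~\ref{L:MO-embd} applied with $\theta(x,r)=\frac1{m^*(x)-\vep}|r|^{m^*(x)-\vep}$ converts hypothesis \eqref{m(x)-hyp} into the continuous embedding $L^{m^*(x)-\vep}(\Omega)\hookrightarrow\Lpso$; composing gives $X\hookrightarrow\hookrightarrow V$. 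Finally, if $u\in V$ satisfies $\fhi(u)+\modu(u)\le c$, then in particular $\int_\Omega\frac1{m(x)}|\nabla u|^{m(x)}\,\dix\le c$, and since $1<m^-\le m^+<+\infty$ this modular bound controls $\|\,|\nabla u|\,\|_{L^{m(x)}(\Omega)}=\|u\|_X$ (e.g.\ via the general estimate $\|v\|_\rho\le\rho(v)+1$, cf.~\eqref{norm<modu}, applied in $L^{m(x)}(\Omega)$); hence $\|u\|_X\le\fQ(c)$ for a suitable non-decreasing $\fQ$. Thus Assumption~\ref{hp:B} holds.

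It then remains to record the standard fact that $\partial_\Omega\fhi$ coincides with the $V$-realization of $w\mapsto-\Delta_{m(x)}w$ with homogeneous Dirichlet conditions, interpreted in the distributional sense on $W^{1,m(x)}_0(\Omega)$. With Assumptions~\ref{hp:alpha} and \ref{hp:B} in force and $u_0,f$ as in \eqref{hp:u0}--\eqref{hp:f}, Theorem~\ref{teo:main} produces $u$ with the regularity \eqref{rego:u} solving $A(u_t(\cdot,t))+B(u(\cdot,t))\ni f(\cdot,t)$ in $\Vp$ for a.e.\ $t$, with $u|_{t=0}=u_0$, and satisfying the energy identity \eqref{energy}; by Lemma~\ref{lem:sing} this is exactly \eqref{pde1}--\eqref{ic1}, which is the assertion. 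I expect the only genuinely delicate point to be the chain of embeddings $X\hookrightarrow\hookrightarrow V$ and the attendant transfer of lower semicontinuity from the $X$-topology to the $V$-topology, since this hinges on the Musielak-Orlicz embedding Lemma~\ref{L:MO-embd} and on the technical growth restriction \eqref{m(x)-hyp}; everything else is a routine verification.
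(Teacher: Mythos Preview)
Your proposal is correct and follows essentially the same approach as the paper: reduce \eqref{pde1}--\eqref{ic1} to the abstract Cauchy problem of Theorem~\ref{teo:main} by taking $X=W^{1,m(x)}_0(\Omega)$ and $\fhi(w)=\int_\Omega\frac1{m(x)}|\nabla w|^{m(x)}\,\dix$, and check Assumption~\ref{hp:B} via the chain $W^{1,m(x)}_0(\Omega)\hookrightarrow\hookrightarrow L^{m^*(x)-\vep}(\Omega)\hookrightarrow L^\phj(\Omega)$ supplied by the log-H\"older continuity \eqref{log-hoelder} and Lemma~\ref{L:MO-embd} with \eqref{m(x)-hyp}. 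The paper is simply more terse, dismissing part~(a) and the sublevel-set bound in~(b) with ``All the other conditions can be checked easily,'' whereas you spell these out; the substance is identical.
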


Thanks to Theorem \ref{T:gen}, the existence result above can be extended to more general settings such as
\begin{alignat}{4}
\beta(x,u_t(x,t)) - \Delta_{m(x)} u(x,t) &= f(x,t) \quad &&\mbox{ for } x \in \Omega, \ t > 0.\label{pde1.5}
\end{alignat}
Here $\beta = \beta(x,r) : \Omega \times \R \to \R$ is a function 
which is measurable in $x$ for all $r \in \R$, maximal monotone in $r$ for a.e.~$x \in \Omega$,
and satisfies
\begin{alignat}{4}
\alpha_0 \phj(x,r) &\leq \beta(x,r) r + c_1(x) \quad &&\mbox{ for } \ r \in \R, \ x \in \Omega,\label{beta-1}\\
\phj^*(x,\beta(x,r)) &\leq C_2 \phj(x,r) + c_3(x) \quad &&\mbox{ for } \ r \in \R, \ x \in \Omega\label{beta-2}
\end{alignat}
for some $\alpha_0>0$, $C_2 \geq 0$ and $c_1,c_3 \in L^1(\Omega)$. Set $M : V \to V^*$ by
$$
z = M(u) \quad \stackrel{\text{define}}\Leftrightarrow \quad z(x) = \beta(x,u(x)) 
\ \mbox{ for a.e. } x \in \Omega
$$
for $u \in V$ and $z \in V^*$. Then $M$ turns out to be a maximal monotone operator 
satisfying (i) and (ii) of Assumption \ref{hp:M}. Hence we have

\begin{thm}[Existence of strong solution for \eqref{pde1.5}]\label{T:ap2}
In addition to the assumptions of Theorem {\rm \ref{T:ap1}}, let 
$\beta = \beta(x,r) : \Omega \times \R \to \R$ be a function measurable in $x$ 
for any $r \in \R$ and maximal monotone in $r$ for a.e.~$x \in \Omega$ such that
\eqref{beta-1} and \eqref{beta-2} hold true. Then for any $f \in L^{\phj^*}(Q)$
and $u_0 \in W^{1,m(x)}_0(\Omega)$, the Cauchy-Dirichlet problem for \eqref{pde1.5}
{\rm (}along with \eqref{bc1} and \eqref{ic1}{\rm )} admits at least one strong 
solution $u \in L^\phj(Q)$ in the sense of Theorem {\rm \ref{T:gen}.}
\end{thm}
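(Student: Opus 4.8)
The plan is to view \eqref{pde1.5}--\eqref{ic1} (together with \eqref{bc1}) as the abstract Cauchy problem \eqref{gen-eq} in $V=\Lpso$, with $B=\deo\fhi$ the realization of $-\Delta_{m(x)}$ and $M$ the Nemytskii-type operator induced by $\beta$, and then to invoke Theorem \ref{T:gen}. The verification that $B$ fulfils Assumption \ref{hp:B} is identical to the one carried out for Theorem \ref{T:ap1} (compact embedding $W^{1,m(x)}_0(\Omega)\hookrightarrow L^\phj(\Omega)$ via Lemma \ref{L:MO-embd} and \eqref{m(x)-hyp}), so the only genuinely new task is to check that
\[
 M:V\to 2^{\Vp},\qquad z\in M(u)\ \Longleftrightarrow\ z(x)\in\beta(x,u(x))\ \text{ a.e. in }\Omega,
\]
is maximal monotone with $D(M)=V$ and satisfies (i)--(ii) of Assumption \ref{hp:M}.

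First I would dispatch the routine part. Well-definedness of $M$ rests on the fact that $\beta$ is a Carath\'eodory family of maximal monotone graphs (it is, e.g., the pointwise limit of its Yosida approximations $\beta_\mu$, which are Carath\'eodory), so for $u\in V$ there is a measurable selection $x\mapsto z(x)\in\beta(x,u(x))$; moreover $z\in\Lpsostar$ because integrating \eqref{beta-2} and using $\modu(u)<+\infty$ for $u\in V=E^\phj(\Omega)$ (Lemma \ref{L:MO-chk}) gives $\modud(z)\le C_2\modu(u)+\|c_3\|_{L^1(\Omega)}<+\infty$. Monotonicity of $M$ is immediate from monotonicity of each $\beta(x,\cdot)$. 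Properties (i) and (ii) of Assumption \ref{hp:M} are then obtained simply by integrating \eqref{beta-1} and \eqref{beta-2} over $\Omega$: for $u\in V$ and $z\in M(u)$ one gets $\alpha_0\modu(u)\le\duav{z,u}+\|c_1\|_{L^1(\Omega)}$ and $\modud(z)\le C_2\modu(u)+\|c_3\|_{L^1(\Omega)}\le\max\{C_2,\|c_3\|_{L^1(\Omega)}\}(\modu(u)+1)$.

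The hard part will be the maximality of $M$. My plan is to use the modular-based maximality criterion, Theorem \ref{thm:mamo}: it suffices to show that $M+\lambda A$ is surjective onto $\Vp$ for some $\lambda>0$, with $A=\deo\modu$. Given $g\in\Vp\simeq\Lpsostar$, for a.e.\ $x$ the graph $r\mapsto\beta(x,r)+\lambda\alpha(x,r)$ is maximal monotone and coercive in both directions --- here I use that $\alpha(x,\cdot)=\de\phj(x,\cdot)$ is single-valued, continuous, strictly increasing, and onto $\R$ because $\phj^*$ is a generalized $N$-function (Lemma \ref{L:phj*} and \eqref{Nfun}) --- so there is a unique $u(x)$ with $g(x)\in\beta(x,u(x))+\lambda\alpha(x,u(x))$, and $x\mapsto u(x)$ is measurable by a measurable-selection argument (the solution map is Carath\'eodory, being the monotone pointwise limit of the solution maps attached to $\beta_\mu+\lambda\alpha$). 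Setting $z:=g-\lambda\alpha(\cdot,u(\cdot))$, testing the pointwise identity by $u(x)$, using $\alpha(x,r)r\ge0$ and \eqref{beta-1}, and then integrating with the help of \eqref{hoelder} and $\vep$-Young (Lemma \ref{L:young}), I obtain $\modu(u)\le C(\|u\|_V+1)$, so $u\in V$ by Lemma \ref{lemma:1b}; then $z\in\Lpsostar$ by \eqref{beta-2} as above and $\lambda\alpha(\cdot,u(\cdot))=g-z\in\Vp$, so $g\in M(u)+\lambda A(u)$. Hence $M$ is maximal. With Assumptions \ref{hp:alpha}, \ref{hp:B} and \ref{hp:M} all in force, Theorem \ref{T:gen} then delivers the asserted strong solution $u\in\calV=\LpsQ$; the time-dependent operator $\calM$ is handled internally to the proof of Theorem \ref{T:gen} exactly as $\calB$ is in Lemma \ref{lemma:3}, using the demicontinuity of $(A+M)^{-1}$ from Lemma \ref{lemma:3a}. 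I expect the measurable-selection step --- passing from pointwise solvability of $\beta(x,\cdot)+\lambda\alpha(x,\cdot)\ni g(x)$ to a genuinely $V$-valued $u$ --- to be the only delicate point.
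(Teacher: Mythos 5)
Your argument is correct and follows the same route as the paper: reduce \eqref{pde1.5}--\eqref{ic1} to the abstract Cauchy problem \eqref{gen-eq}, verify (i)--(ii) of Assumption~\ref{hp:M} by integrating \eqref{beta-1}--\eqref{beta-2}, reuse the compactness argument from Theorem~\ref{T:ap1} for Assumption~\ref{hp:B}, and apply Theorem~\ref{T:gen}. The paper merely asserts, without proof, that the Nemytskii operator $M$ is maximal monotone with $D(M)=V$; your verification --- via the modular-based criterion Theorem~\ref{thm:mamo}, pointwise solvability of $\beta(x,\cdot)+\lambda\alpha(x,\cdot)\ni g(x)$ (using that \eqref{beta-2} forces $D(\beta(x,\cdot))=\R$ and that $\alpha(x,\cdot)$ is continuous, strictly increasing and onto $\R$), a Yosida-type measurable-selection step, and the estimates giving $u\in V$ and $g-\lambda A(u)\in\Vp$ via \eqref{beta-1}--\eqref{beta-2}, Lemma~\ref{lemma:1b} and Lemma~\ref{L:young} --- supplies exactly the detail the paper leaves to the reader and is the natural way to close it within this framework.
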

\begin{rem}[Nonsmooth graphs]
The above result permits us to consider in particular the case when, for some, or all, $x\in\Omega$, 
$\beta(x,\cdot)$ is a nonsmooth maximal monotone graph (i.e., it contains vertical segments), which 
may occur, for instance, in some setting related to rate-independent problems or in some
class of phase-field models.
\end{rem}
Now, we shall discuss a generalization of the nonlinear elliptic operator.
To this end, we shall introduce the notion of Musielak-Orlicz-Sobolev spaces defined as follows:
\begin{df}[Musielak-Orlicz-Sobolev space]\label{D:MOS}
Let $\Omega$ be an open set in $\R^N$ and let $\theta$ be a generalized $\Phi$-function on $\Omega$. 
Then the \emph{Musielak-Orlicz-Sobolev space} $W^{1,\theta}(\Omega)$ is defined as
$$
W^{1,\theta}(\Omega) := \left\{ u \in L^\theta(\Omega) \colon \partial_{x_j} u \in L^\theta(\Omega) 
\ \mbox{ for } \ i = 1,2,\ldots,N\right\},
$$
where $\partial_{x_j} u$ stands for the distributional derivative of $u$, equipped with norm
$$
\|u\|_{W^{1,\theta}} := \|u\|_{L^\theta(\Omega)} + \||\nabla u|\|_{L^\theta(\Omega)}
\quad \mbox{ for } \ u \in W^{1,\theta}(\Omega).
$$
Furthermore, $W^{1,\theta}_0(\Omega)$ is defined as the closure of $C^\infty_c(\Omega)$ in $W^{1,\theta}(\Omega)$.
\end{df}
We next recall a compact embedding theorem for Musielak-Orlicz-Sobolev spaces, which is a counterpart of the 
well-known Rellich-Kondrachov theorem for standard Sobolev spaces. To this end, we introduce the notion of 
Matuszewska-Orlicz index of generalized $\Phi$-functions.
\begin{df}[Matuszewska-Orlicz index]
Under the setting of Definition {\rm \ref{D:MOS}}, set
\begin{equation}\label{Mat-1}
M(x,\lam) := \limsup_{r \to +\infty} \frac{\theta(x,\lam r)}{\theta(x,r)}
\quad \mbox{ for } \ x \in \Omega \ \mbox{ and } \ \lam > 1.
\end{equation}
Then the \emph{Matuszewska-Orlicz index} of $\theta$ is given by
\begin{equation}\label{Mat-2}
m(x) := \lim_{\lam \to +\infty} \frac{\log M(x,\lam)}{\log \lam} = \inf_{\lam > 1} \frac{\log M(x,\lam)}{\log \lam}
\quad \mbox{ for } \ x \in \Omega.
\end{equation}
Here the limit \eqref{Mat-1} is said to be \emph{uniform}, if for any $\vep > 0$ there exist $r_0>1$ and $\Lambda>1$ such that, for all $(x,\lam) \in \Omega\times [\Lambda,+\infty)$ and $r \geq r_0$, 
it holds that
$$
\left|M(x,\lam)-\frac{\theta(x,\lam r)}{\theta(x,r)}\right| < \vep.
$$
\end{df}
Furthermore, we recall a compact embedding theorem established in~\cite[Theorem 5.1]{LM19}:
\begin{thm}[Compact embedding of Musielak-Orlicz-Sobolev spaces]\label{T:cpt-emb}
Assume $\Omega$ be a bounded domain of $\R^N$ and let $\theta$ be a locally integrable 
generalized $\Phi$-function in $\Omega$. Suppose that the limits in \eqref{Mat-1} and \eqref{Mat-2}
are uniform with respect to $(x,\lambda)\in\Omega\times (\lam_0,\infty)$ for some $\lam_0 > 0$
and assume that the Matuszewska-Orlicz index $m(x)$ is the restriction onto $\Omega$ of a 
continuous function $\tilde{m}$ defined on the closure of $\Omega$. In addition, assume that
$$
1 < m_- := \inf_\Omega m
$$
and
\begin{equation}\label{theta}
\esssup_{x \in \Omega} \theta(x,r) \leq \Theta(r)
\end{equation}
for some function $\Theta:(0,+\infty)\to(0,+\infty)$. Then the embedding 
$W^{1,\theta}_0(\Omega) \hookrightarrow L^\theta(\Omega)$ is compact.

Furthermore, a variant of Poincar\'e's inequality holds true, i.e., there exists a constant $C$ 
depending only on $N$, $\Omega$, $\theta$ such that
\begin{equation*}
 \|u\|_{L^\theta(\Omega)} \leq C \||\nabla u|\|_{L^\theta(\Omega)} \quad 
 \mbox{ for } \ u \in W^{1,\theta}_0(\Omega).
\end{equation*}
Hence $\||\nabla \cdot|\|_{L^\theta(\Omega)}$ is an equivalent norm in $W^{1,\theta}_0(\Omega)$.
\end{thm}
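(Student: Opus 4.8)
The plan is to reduce the statement to a classical Rellich--Kondrachov step in $L^1(\Omega)$ together with a modular version of Vitali's convergence theorem, the bridge between the two being a continuous Sobolev-type embedding of $W^{1,\theta}_0(\Omega)$ into a Musielak--Orlicz space that grows \emph{strictly} faster than $L^\theta(\Omega)$.

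\emph{Step 1 (a crude embedding and the Poincar\'e inequality).} First I would record that $L^\theta(\Omega)\hookrightarrow L^1(\Omega)$ continuously: since $\theta$ is locally integrable it is proper, hence so is $\theta^*$ by Proposition~\ref{P:proper}, so $\chi_\Omega\in L^{\theta^*}(\Omega)$ and H\"older's inequality \eqref{hoelder} applies ($|\Omega|<+\infty$). Therefore $W^{1,\theta}_0(\Omega)\hookrightarrow W^{1,1}_0(\Omega)$ with continuous injection. The Poincar\'e inequality $\|u\|_{L^\theta(\Omega)}\le C\,\||\nabla u|\|_{L^\theta(\Omega)}$ for $u\in W^{1,\theta}_0(\Omega)$ is obtained by the usual one-dimensional estimate $|u(x)|\le\int_{\R}|\partial_{x_1}u(x',s)|\,\d s$ for $u\in C^\infty_c(\Omega)$, turned into a modular inequality by using $|\Omega|<+\infty$ together with the $\Delta_2$-type bounds for $\theta$ that follow from its finite upper Matuszewska index and the uniformity in \eqref{Mat-1}; density of $C^\infty_c(\Omega)$ then extends it to all of $W^{1,\theta}_0(\Omega)$.

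\emph{Step 2 (the Sobolev gain --- the heart of the matter).} The decisive ingredient will be a continuous embedding $W^{1,\theta}_0(\Omega)\hookrightarrow L^{\theta_*}(\Omega)$, where $\theta_*$ is a generalized $\Phi$-function whose Matuszewska index is the Sobolev conjugate $m_*(x)=Nm(x)/(N-m(x))_+$ of $m(x)$. Since $1<m_-=\inf_\Omega m$ and $m$ is the restriction to $\Omega$ of a function continuous on $\overline\Omega$, the difference $m_*(x)-m(x)$ is bounded away from zero (and equals $+\infty$ on the set $\{m\ge N\}$), so $\theta_*/\theta\to+\infty$ uniformly in $x\in\Omega$ as the argument tends to $+\infty$, and $\theta_*$ satisfies the $\Delta_2$-condition. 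This embedding is precisely where all the hypotheses of the theorem are spent: the uniformity of the limits in \eqref{Mat-1}--\eqref{Mat-2} and the bound $\esssup_{x}\theta(x,r)\le\Theta(r)$ let one sandwich $\theta(x,\cdot)$, for large argument, between the variable-exponent powers $r\mapsto r^{m(x)\pm\varepsilon}$ and transfer the variable-exponent Sobolev--Poincar\'e inequality to the Musielak--Orlicz setting via Lemma~\ref{L:MO-embd}, treating separately the sub-critical regime ($m_+<N$), the super-critical regime ($m_->N$, where one rather uses a Morrey-type embedding into $C(\overline\Omega)\hookrightarrow L^\theta(\Omega)$ and the compactness follows already from Ascoli), and the intermediate one. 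The hard part of the whole theorem is this embedding; the remaining steps are soft.

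\emph{Step 3 (compactness via Rellich and Vitali).} Let $(u_n)$ be bounded in $W^{1,\theta}_0(\Omega)$. By Step~1 and classical Rellich--Kondrachov a subsequence (not relabeled) converges in $L^1(\Omega)$, and a further subsequence converges a.e.\ in $\Omega$, to some $u$; by Step~2 the sequence $(u_n)$ is bounded in $L^{\theta_*}(\Omega)$, so by the $\Delta_2$-condition of $\theta_*$ the modulars $\varrho_{\theta_*,\Omega}(u_n-u)$ are bounded, say by $C_0$. Fix $\lambda>0$. The integrands $\theta(x,\lambda|u_n(x)-u(x)|)$ tend to $0$ for a.e.\ $x$ by continuity of $\theta(x,\cdot)$, and for any measurable $E\subset\Omega$ and $L>0$,
\[
  \io \chi_E\,\theta\big(x,\lambda|u_n-u|\big)\,\d x
   \;\le\; \Theta(\lambda L)\,|E| \;+\; \eta(L)\,\varrho_{\theta_*,\Omega}(u_n-u)
   \;\le\; \Theta(\lambda L)\,|E| + \eta(L)\,C_0,
\]
where on $\{|u_n-u|\le L\}$ one uses $\theta(x,\lambda|u_n-u|)\le\Theta(\lambda L)$, and on $\{|u_n-u|>L\}$ one uses $\theta(x,\lambda s)\le\eta(L)\,\theta_*(x,s)$ with $\eta(L):=\sup_{x\in\Omega,\,s>L}\theta(x,\lambda s)/\theta_*(x,s)\to0$ as $L\to+\infty$ by the uniform domination from Step~2 and a $\Delta_2$-type bound absorbing the factor $\lambda$. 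Choosing $L$ large and then $|E|$ small makes the right-hand side arbitrarily small uniformly in $n$; since $|\Omega|<+\infty$, Vitali's theorem gives $\varrho_{\theta,\Omega}\big(\lambda(u_n-u)\big)\to0$. As $\lambda>0$ was arbitrary, the definition of the Luxemburg norm yields $\|u_n-u\|_{L^\theta(\Omega)}\to0$ (alternatively, once $\theta$ is known to satisfy $\Delta_2$, Lemma~\ref{L:equiv} applies directly). Hence every bounded sequence in $W^{1,\theta}_0(\Omega)$ has a subsequence converging in $L^\theta(\Omega)$, i.e.\ the embedding is compact, while the Poincar\'e inequality --- and hence the equivalence of norms in $W^{1,\theta}_0(\Omega)$ --- was already obtained in Step~1.
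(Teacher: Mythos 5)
The paper does not contain a proof of this theorem: the statement is recalled verbatim from Lang--M\'endez \cite[Theorem~5.1]{LM19}, as the sentence preceding it says explicitly, so there is no in-paper argument to compare yours against.

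Evaluated on its own, the proposal has a genuine gap at Step~2, which you correctly identify as ``the heart of the matter'' but do not actually prove. You invoke a continuous embedding $W^{1,\theta}_0(\Omega)\hookrightarrow L^{\theta_*}(\Omega)$ for a $\Phi$-function $\theta_*$ with strictly larger Matuszewska index, and claim it follows by sandwiching $\theta(x,\cdot)$ between powers $r^{m(x)\pm\varepsilon}$. But the Matuszewska index and the uniformity in \eqref{Mat-1}--\eqref{Mat-2} only control the \emph{ratio} $\theta(x,\lambda r)/\theta(x,r)$ for large $\lambda$ and $r$; iterating gives $\theta(x,r)\asymp\theta(x,r_0)(r/r_0)^{m(x)\pm\varepsilon}$ for $r\ge r_0$, where the coefficient $\theta(x,r_0)$ is bounded \emph{above} by $\Theta(r_0)$ through \eqref{theta} but is bounded \emph{below} by nothing in the hypotheses. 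So the lower half of the sandwich, $r^{m(x)-\varepsilon}\lesssim\theta(x,r)$ with constants uniform in $x$, is not available, and without it one cannot transfer a variable-exponent Sobolev--Poincar\'e inequality to $\theta$ via Lemma~\ref{L:MO-embd}. The fact that \cite{LM19} concludes only $W^{1,\theta}_0\hookrightarrow L^\theta$ compactly with no Sobolev gain, and that (as the paper's own remark after the statement notes) those authors emphasize the sharpness of every hypothesis via counterexamples, strongly suggests the actual proof is a local freezing-of-the-index argument exploiting the continuity of $m$ up to $\partial\Omega$, not a global embedding into a strictly better Musielak--Orlicz space. Steps~1 and~3 are the soft parts and would close the argument if Step~2 held, but as written it does not.
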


\begin{rem}
As for standard Sobolev spaces, say $W^{1,m}_0(\Omega)$ for constant $1 < m < +\infty$, the conclusion 
of the theorem mentioned above corresponds to the compact embedding 
$W^{1,m}_0(\Omega) \hookrightarrow L^m(\Omega)$, which always holds true; however, the compact embedding 
$W^{1,\theta}_0(\Omega) \hookrightarrow L^\theta(\Omega)$ is not always true for Musielak-Orlicz-Sobolev 
spaces $W^{1,\theta}_0(\Omega)$ (see~\cite[Examples 3.1 and 3.2]{LM19} for counterexamples due to an 
$x$-dependence and oscillation of generalized $\Phi$-functions). Moreover, in~\cite{LM19}, the necessity 
of the assumptions, e.g., uniformity of the limits, \eqref{theta} and continuity of the index up to the 
boundary, is also discussed by giving counterexamples. 
\end{rem}

Now, we are ready to state a target equation,
\begin{alignat}{4}
\alpha(x,u_t(x,r)) - \mathrm{div} \,\mathbf{b}(x,\nabla u(x,t)) &= f(x,t) \quad &&\mbox{ for } \ x \in \Omega, \ t > 0,\label{pde2}\\
u(x,t) &= 0 \quad &&\mbox{ for } \ x \in \partial \Omega, \ t > 0,\label{bc2}\\
u(x,0) &= u_0(x) \quad &&\mbox{ for } \ x \in \Omega,\label{ic2}
\end{alignat}
where $\alpha : \Omega \times \R \to \R$ satisfies Assumption \ref{hp:alpha} and $\mathbf{b}:\Omega\times \R^N \to \R^N$ has a potential $\theta : \Omega\times \R \to \R$ which is even (i.e., $\theta(x,-r) = \theta(x,r)$), lower semicontinuous (indeed, of class $C^1$) and convex in the second variable such that
$$
\mathbf{b}(x,\xi) = \nabla_{\mathbf{\xi}} \theta(x,|\xi|) \quad \mbox{ for } \ x \in \Omega \ \mbox{ and } \ \xi \in \R^N.
$$ 
Moreover, assume that $\theta$ is a locally integrable generalized $\Phi$-function in $\Omega$ 
satisfying \eqref{MO-embd} and fulfilling
all the assumptions of Theorem \ref{T:cpt-emb}. Hence, from the facts we have reviewed so far, it holds that
\begin{equation*}
W^{1,\theta}_0(\Omega) \stackrel{\text{compact}}{\hookrightarrow} L^\theta(\Omega) \stackrel{\text{continuous}}\hookrightarrow L^\phj(\Omega).
\end{equation*}
Then we set 
$$
X=W^{1,\theta}_0(\Omega) \quad \mbox{ and } \quad
\fhi(w) := \begin{cases}
	    \int_\Omega \theta(x,|\nabla w(x)|) \, \d x &\mbox{ if } \ w \in W^{1,\theta}_0(\Omega),\\
	    +\infty &\mbox{ otherwise,}
	   \end{cases}
$$
which comply with Assumption \ref{hp:B} along with $\|\cdot\|_X := \||\nabla \cdot|\|_{L^\theta(\Omega)}$. Then the Cauchy-Dirichlet problem \eqref{pde2}--\eqref{ic2} is reduced to \eqref{eqn}, and therefore, thanks to Theorem \ref{teo:main}, we can assure existence of a strong solution to \eqref{pde2}--\eqref{ic2}.

It is worth mentioning that X.~Fan~\cite{Fan12NA} also provided a compact embedding theorem for 
Musielak-Orlicz-Sobolev spaces, where, under certain assumptions including smoothness for 
generalized $\Phi$-functions (not only in $r$ but also in $x$), it is proved that $W^{1,\theta}_0(\Omega)$ 
is compactly embedded in Musielak-Orlicz spaces $L^\psi(\Omega)$ for any generalized $\Phi$-functions 
$\psi$ satisfying $\psi \ll \theta^*$, i.e., for any $\lam > 0$, 
$\lim_{r\to+\infty} \psi(x,\lam r)/\theta^*(x,r) = +\infty$ uniformly for $x \in \Omega$. 
Here $\theta^*$ denotes the \emph{Sobolev conjugate function} of $\theta$ and corresponds 
to the Sobolev critical exponent for usual Sobolev spaces (with constant exponents). 
One can also apply the compact embedding theorem developed in~\cite{Fan12NA} and may 
obtain an existence result for \eqref{pde2}--\eqref{ic2} under a different frame of assumptions.

Finally, another possible application of the preceding results may be provided by the system
\begin{alignat*}{4}
\alpha(x,u_t(x,r)) + (-\Delta)^s u(x,t) &= f(x,t) \quad &&\mbox{ for } \ x \in \Omega, \ t > 0,\\
u(x,t) &= 0 \quad &&\mbox{ for } \ x \in \R^N \setminus \Omega, \ t > 0,\\
u(x,0) &= u_0(x) \quad &&\mbox{ for } \ x \in \Omega,
\end{alignat*}
where $0 < s < 1$ and $(-\Delta)^s$ is the so-called \emph{fractional Laplacian} defined through the following weak form:
$$
\left\langle (-\Delta)^s u, v \right\rangle_{\mathcal X^s} = \frac{C_s}2 \iint_{\R^N \times \R^N} \frac{(u(x)-u(y))(v(x)-v(y))}{|x-y|^{N+2s}} \, \d x \, \d y
$$
for $u,v \in \mathcal{X}^s := \{w \in H^s(\R^N) \colon w \equiv 0 \ \mbox{ in } \R^N \setminus \Omega \}$ equipped with $\|w\|_{\mathcal X^s} := (\|w\|_{L^2(\Omega)}^2 + \langle (-\Delta)^sw, w \rangle_{\mathcal X^s})^{1/2}$ for $w \in \mathcal{X}^s$ and where $C_s>0$ is a suitable constant (see~\cite[(3.2)]{Hitch}). Then set
$$
X = \mathcal{X}^s \quad \mbox{ and } \quad \fhi(w) = \frac{C_s}4 \iint_{\R^N \times \R^N} \frac{|w(x)-w(y)|^2}{|x-y|^{N+2s}} \, \d x \, \d y \quad \mbox{ for } \ w \in X.
$$
We refer the reader to, e.g.,~\cite{Hitch,SeVa13-1,SeVa13-2,ASS16,ASS19}, for more details on this kind of problems. Then Assumption \ref{hp:B} can also be checked for this setting. Indeed, it is clearly that $\fhi$ is proper, lower-semicontinuous, convex and coercive in $\mathcal X^s$ due to a Poincar\'e-type inequality (see, e.g.,~\cite[p.~9]{ASS16}). Furthermore, the space $X$ turns out to be compactly embedded in $V$, provided that \eqref{m(x)-hyp} with $m^*(x)$ replaced by $2N/(N-2s)_+$ is satisfied (indeed, $H^s(\Omega)$ is compactly embedded in $L^q(\Omega)$ for any $q \in [1,2N/(N-2s)_+)$, see also Lemma \ref{L:MO-embd}).

\section*{Acknowledgments}
G.~Akagi has been supported by JSPS KAKENHI Grants Number JP21KK0044, JP21K18581, JP20H01812, JP18K18715, JP16H03946, JP20H00117, JP17H01095, by the Alexander von Humboldt Foundation, and by the Carl Friedrich von Siemens Foundation. He is also deeply grateful to the Helmholtz Zentrum M\"unchen and the Technische Universit\"at M\"unchen for their kind hospitality and support during his stay in Munich. G.~Schimperna has been partially supported by GNAMPA (Gruppo Nazionale per l'Analisi Matematica, la Probabilit\`a e le loro Applicazioni) of INdAM (Istituto Nazionale di Alta Matematica). This work was supported by the Research Institute for Mathematical Sciences, an International Joint Usage/Research Center located in Kyoto University.

\end{document}